\DeclareSymbolFontAlphabet{\mathrsfs}{rsfs}
\let\@secnumfont\bfseries
\def\section{\@startsection{section}{1}%
  \z@{4\linespacing\@plus\linespacing}{\linespacing}%
  {\bfseries\centering}}
\def\introsection{\@startsection{section}{1}%
  \z@{3\linespacing\@plus\linespacing}{\linespacing}%
  {\bfseries\centering}}
\def\subsection{\@startsection{subsection}{2}%
   \z@{1.25\linespacing\@plus.7\linespacing}{.5\linespacing}%
   {\normalfont\bfseries}}
\def\subsectionsinline{\def\subsection{\@startsection{subsection}{2}%
  \z@{1\linespacing\@plus.7\linespacing}{-.5em}%
  {\normalfont\bfseries}}}
\theoremstyle{definition}
\newtheorem{definition}[equation]{Definition}
\newtheorem{example}[equation]{Example}
\newtheorem*{definition*}{Definition}
\newtheorem*{example*}{Example}
\newtheorem*{problem*}{\color{blue}Problem}
\newtheorem*{exercise*}{Exercise}
\newtheorem*{question*}{\color{blue}Question}
\newtheorem*{project*}{\color{blue}Project}
\newtheorem*{construction*}{Construction}
\theoremstyle{remark}
\newtheorem{remark}[equation]{Remark}
\newtheorem*{note*}{Note}
\newtheorem*{notation*}{Notation}
\newtheorem*{remark*}{Remark}
\newtheorem*{data*}{Data}
\theoremstyle{plain}
\newtheorem{theorem}[equation]{Theorem}
\newtheorem{lemma}[equation]{Lemma}
\newtheorem{proposition}[equation]{Proposition}
\newtheorem{conjecture}[equation]{Conjecture}
\newtheorem*{theorem*}{Theorem}
\newtheorem*{corollary*}{Corollary}
\newtheorem*{lemma*}{Lemma}
\newtheorem*{proposition*}{Proposition}
\newtheorem*{conjecture*}{Conjecture}
\newtheorem*{claim*}{Claim}
\newtheorem*{proposal*}{Proposal}
\newtheorem*{conclusion*}{Conclusion}
\newtheorem*{hypothesis*}{Hypothesis}
\newtheorem*{assumption*}{Assumption}
\newenvironment{proof*}[1][\proofname]{
  \begin{proof}[#1]}{  
\end{proof}}
\numberwithin{equation}{section}
\definecolor{refkey}{rgb}{0,.6,.4}
\renewcommand{\:}{\colon}
\renewcommand{\AA}{{\mathbb A}}
\DeclareMathOperator{\Aut}{Aut}
\newcommand{\CC}{{\mathbb C}}
\DeclareMathOperator{\End}{End}
\DeclareMathOperator{\Hom}{Hom}
\DeclareMathOperator{\id}{id}
\DeclareMathOperator{\Ker}{Ker}
\DeclareMathOperator{\Map}{Map}
\DeclareMathOperator{\pt}{pt}
\newcommand{\QQ}{{\mathbb Q}}
\newcommand{\RR}{{\mathbb R}}
\newcommand{\TT}{\mathbb T}
\newcommand{\ZZ}{{\mathbb Z}}
\newcommand{\chiup}{\raise.5ex\hbox{$\chi$}}
\newcommand{\cir}{S^1}
\newcommand{\inv}{^{-1}}
\DeclareRobustCommand{\mstrut}{^{\vphantom{1*\prime y\vee M}}}
\newcommand{\Res}[1]{\negmedspace\Bigm|\mstrut_{#1}}
\newcommand{\temsquare}{\raise3.5pt\hbox{\boxed{ }}}
\newcommand{\zmod}[1]{\ZZ/#1\ZZ}
\renewcommand{\cir}{\ensuremath{S^1}}
\definecolor{refkey}{rgb}{0,.8,.2}\definecolor{labelkey}{rgb}{1,0,0} 
\DeclareFontFamily{OT1}{pzc}{}
\DeclareFontShape{OT1}{pzc}{m}{it}{<-> s * [1.100] pzcmi7t}{}
\DeclareMathAlphabet{\mathpzc}{OT1}{pzc}{m}{it}
\DeclareMathOperator{\Bord}{Bord}
\DeclareMathOperator{\Bun}{Bun}
\DeclareMathOperator{\Edges}{Edge}
\DeclareMathOperator{\Euler}{Euler}
\DeclareMathOperator{\Fun}{Fun}
\DeclareMathOperator{\Image}{Im}
\DeclareMathOperator{\OEdges}{OEdge}
\DeclareMathOperator{\Obj}{Obj}
\DeclareMathOperator{\Vertices}{Vert}
\DeclareMathOperator{\coev}{coev}
\DeclareMathOperator{\ev}{ev}
\DeclareMathOperator{\hol}{hol}
\newcommand{\Amod}{\sA\textnormal{-mod}}
\newcommand{\BA}{\rB\mstrut _{\sA}}
\newcommand{\BH}{\rB\mstrut _{\!H}}
\newcommand{\BYf}{B^Y\!/\varphi}
\newcommand{\Cat}{\mathscr{C}at}
\newcommand{\Cx}{\CC^{\times}}
\newcommand{\Dx}{\rD\mstrut _{\phantom{W}\!\!\!\!\!\!\!x}}
\newcommand{\EA}[1]{\End\mstrut _{\sA}(#1)}
\newcommand{\EC}[1]{\End\mstrut _C(#1)}
\newcommand{\FAA}{\rF\mstrut _{\!\sA}}
\newcommand{\FA}{\sF\mstrut _{\!\pA}}
\newcommand{\FB}{\sF\mstrut _{\!\pB}}
\newcommand{\FG}{\rG\mstrut _{\!G}}
\newcommand{\GA}{(\Lambda ;A)}
\newcommand{\HA}{\Hom\mstrut _{\sA}}
\newcommand{\OA}{\mathcal{O}(\sA)}
\newcommand{\OC}{\mathcal{O}(C)}
\newcommand{\PYv}{\Pi \mstrut _{Y,v}}
\newcommand{\PY}{\Pi \mstrut _Y}
\newcommand{\Rep}{\mathscr{R}ep}
\newcommand{\TA}{\rT\mstrut _{\sA}}
\newcommand{\TC}{\rT\mstrut _{\!C}}
\newcommand{\TensCat}{\mathscr{T}ens\mathscr{C}at}
\newcommand{\VAA}{V\mstrut _{\sA}}
\newcommand{\VGG}{\Vect_G(G)}
\newcommand{\Vect}{\mathscr{V}ect}
\newcommand{\bA}{\BA}
\newcommand{\bC}{\mathbb{C}}
\newcommand{\bR}{\mathbb{R}}
\newcommand{\bX}{\partial X}
\newcommand{\be}{\bar e}
\newcommand{\bmut}{\bmu 2}
\newcommand{\bmu}[1]{\bmuu _{#1}}
\newcommand{\bone}{\boldsymbol{1}}
\newcommand{\bord}[1]{\Bord_{#1}}
\newcommand{\bung}[1]{\Bun_G(#1)}
\newcommand{\bunh}[1]{\Bun_{H}(#1)}
\newcommand{\cD}{\mathrsfs{D}}
\newcommand{\cE}{\mathrsfs{E}}
\newcommand{\cF}{\mathrsfs{F}}
\newcommand{\cG}{\mathrsfs{G}} 
\newcommand{\cH}{\mathcal{H}}
\newcommand{\cQ}{\mathrsfs{Q}}
\newcommand{\cRep}{\Rep}
\newcommand{\cR}{\mathrsfs{R}}
\newcommand{\cVect}{\Vect}
\newcommand{\conf}[2]{#1^{\Vertices(#2)}}
\newcommand{\dual}{^\vee}
\newcommand{\gpd}{/\!/} 
\newcommand{\hFG}{\rR\mstrut _{\!G}}
\newcommand{\hW}{\widehat{W}}
\newcommand{\pA}{M}
\newcommand{\pB}{N}
\newcommand{\pL}{L}
\newcommand{\pmo}{\{\pm1\}}
\newcommand{\ptt}{\textnormal{point}}
\newcommand{\rB}{\mathrsfs{B}}
\newcommand{\rD}{\mathrsfs{D}}
\newcommand{\rF}{\mathrsfs{F}}
\newcommand{\rG}{\mathrsfs{G}}
\newcommand{\rI}{\mathrsfs{I}}
\newcommand{\rL}{\mathrsfs{L}}
\newcommand{\rR}{\mathrsfs{R}}
\newcommand{\rT}{\mathrsfs{T}}
\newcommand{\sA}{\mathscr{T}}
\newcommand{\sC}{\mathscr{C}}
\newcommand{\sE}{\mathscr{E}}
\newcommand{\sF}{\mathfrak{F}}
\newcommand{\sG}{\mathscr{G}}
\newcommand{\sL}{\mathscr{L}}
\newcommand{\sM}{\mathscr{M}}
\newcommand{\sTd}{\sT\dual}
\newcommand{\sT}{\boldsymbol{T}}
\newcommand{\sZ}{\mathscr{Z}}
\newcommand{\tG}{\widetilde{\rG}}
\newcommand{\tH}{\widetilde{H}}
\newcommand{\tK}{\widetilde{K}}
\newcommand{\tbord}{\Bord_{\langle n-1,n  \rangle}}
\newcommand{\tcat}{\TensCat}
\newcommand{\tg}{\tilde{g}}
\newcommand{\tj}{\tilde\jmath }
\newcommand{\tpL}{\widetilde{\pL}}
\newcommand{\tpp}{\tilde p }
\newcommand{\triv}[1]{\underline{#1}}
\newcommand{\tx}{\tilde{x}}
\newcommand{\vy}{\vec{y}}
\newcommand{\zo}{[0,1]}
\begin{document}

\abovedisplayskip18pt plus4.5pt minus9pt
\belowdisplayskip \abovedisplayskip
\abovedisplayshortskip0pt plus4.5pt
\belowdisplayshortskip10.5pt plus4.5pt minus6pt
\baselineskip=15 truept
\marginparwidth=55pt

\makeatletter
\renewcommand{\tocsection}[3]{%
  \indentlabel{\@ifempty{#2}{\hskip1.5em}{\ignorespaces#1 #2.\;\;}}#3}
\renewcommand{\tocsubsection}[3]{%
  \indentlabel{\@ifempty{#2}{\hskip 2.5em}{\hskip 2.5em\ignorespaces#1%
    #2.\;\;}}#3} 
\def\csubsection{\@ifstar{\@startsection{subsection}{2}%
  \z@{1.5\linespacing\@plus\linespacing}{+\fontdimen2\font\;}%
  {\normalfont\bfseries}*}%
  {\setcounter{subsection}{\value{equation}}
   \stepcounter{equation}\@startsection{subsection}{3}%
  \z@{0\linespacing\@plus.3\linespacing}{-\fontdimen2\font\;}%
  {\normalfont\itshape}}}
\def\@sect#1#2#3#4#5#6[#7]#8{%
  \edef\@toclevel{\ifnum#2=\@m 0\else\number#2\fi}%
  \ifnum #2>\c@secnumdepth \let\@secnumber\@empty
  \else \@xp\let\@xp\@secnumber\csname the#1\endcsname\fi
  \@tempskipa #5\relax
  \ifnum #2>\c@secnumdepth
    \let\@svsec\@empty
  \else
    \refstepcounter{#1}%
    \edef\@secnumpunct{%
      \ifdim\@tempskipa>\z@ 
        \@ifnotempty{#8}{.\@nx\enspace}%
      \else
        \@ifempty{#8}{}{\@nx\enspace}
      \fi
    }%
    \@ifempty{#8}{%
      \ifnum #2=\tw@ \def\@secnumfont{\bfseries}\fi}{}%
    \protected@edef\@svsec{%
      \ifnum#2<\@m
        \@ifundefined{#1name}{}{%
          \ignorespaces\csname #1name\endcsname\space
        }%
      \fi
      \@seccntformat{#1}%
    }%
  \fi
  \ifdim \@tempskipa>\z@ 
     \begingroup #6\relax
     \@hangfrom{\hskip #3\relax\@svsec}{\interlinepenalty\@M #8\par}
     \endgroup
    \ifnum#2>\@m \else \@tocwrite{#1}{#8}\fi
  \else
  \def\@svsechd{#6\hskip #3\@svsec
    \@ifnotempty{#8}{\ignorespaces#8\unskip\@addpunct.}
  }%
  \fi
  \global\@nobreaktrue
  \@xsect{#5}}

\makeatother

\newcommand{\bmuu}{\mbox{$\raisebox{-.07em}{\rotatebox{9}
  {\tiny
  /}}\hspace{-0.26em}\mu\hspace{-0.88em}\raisebox{-0.98ex}{\scalebox{2} 
  {$\color{white}.$}}\hspace{-0.416em}\raisebox{+0.88ex}
  {$\color{white}.$}\hspace{0.46em}$}} 
 
\setcounter{tocdepth}{2}


 \title[Topological dualities in the Ising model]{Topological Dualities in the Ising Model} 
 \author[D. S. Freed]{Daniel S.~Freed}
 \thanks{This material is based upon work supported by the National Science
Foundation under Grant Numbers DMS-1406056 and DMS-1611957.  Any opinions,
findings, and conclusions or recommendations expressed in this material are
those of the authors and do not necessarily reflect the views of the National
Science Foundation.  This work was initiated at the Aspen Center for Physics,
which is supported by National Science Foundation grant PHY-1066293.}
 \address{Department of Mathematics \\ University of Texas \\ Austin, TX
78712} 
 \email{dafr@math.utexas.edu}

 \author[C. Teleman]{Constantin Teleman} 
 \address{Department of Mathematics \\ University of California \\ 970 Evans
Hall \#3840 \\ Berkeley, CA 94720-3840}  
 \email{teleman@math.berkeley.edu}

 \date{February 14, 2021}
 \begin{abstract} 
 We relate two classical dualities in low-dimensional quantum field theory:
\emph {Kramers-Wannier} duality of the Ising and related lattice models in
$2$ dimensions, with \emph{electromagnetic} duality for finite gauge theories
in $3$~dimensions. The relation is mediated by the notion of \emph {boundary
field theory}: Ising models are boundary theories for pure gauge theory in
one dimension higher. Thus the Ising \emph{order/disorder operators} are
endpoints of \emph{Wilson/'t~Hooft defects} of gauge theory.  Symmetry
breaking on low-energy states reflects the multiplicity of topological
boundary states. In the process we describe lattice theories as (extended)
topological field theories with boundaries and domain walls. This allows us
to generalize the duality to non-abelian groups; finite, semi-simple Hopf
algebras; and, in a different direction, to finite homotopy theories in
arbitrary dimension.
 \end{abstract}
\maketitle

In quantum field theory and statistical mechanics, the $2$-dimensional
\emph{Ising model} has earned the double distinction of being the first
discrete model to exhibit, against expectations, phase transitions in the
large volume limit \cite{P}, and the first non-trivial one to be solved
explicitly \cite{O}.  It is the simplest lattice sigma-model (in
apocryphal terminology) with only nearest-neighbor interactions, and depends
on a single parameter, physically interpreted as the \emph{temperature} $T$,
and encoded as the reciprocal $\beta= 1/kT$ (with Boltzmann's $k$). Nowadays,
detailed treatments can be found in graduate textbooks \cite{ID,
C}. The present paper is our mathematical attempt to understand some
features of the story and locate them within the algebraic structures of
topological quantum field theory (TQFT).

The Ising model assigns two possible states (\emph{spins} valued in $\pm1$)
to each node of a $2$-dimensional lattice. Equal spins for nearby nodes are
probabilistically favored, strongly or weakly, according to $\beta$. For a
very large lattice, the system exhibits two \emph{phases}: a
\emph{ferromagnetic} phase at low temperature, where the spins are mostly
aligned, with one sign dominating; and a \emph{paramagnetic} phase at high
temperature, where regions of spins of both signs co-exist. The model also
has a $2$D Euclidean (lattice) quantum field theory interpretation, with a
space of states $\cH$ assigned to any ``latticed'' circle (subdivided into
edges and nodes). Specifically, $\cH$ is the space of functions on the set of
possible independent spin assignments to the nodes, and is acted upon by the
\emph{transfer matrix}, the analogue of the exponentiated negative-signed
Hamiltonian on a cylindrical space-time. At low temperature, its top
eigenvalue is achieved on the two aligned spin states, the two
$\delta$-functions on the constant maps to $\{\pm1\}$. At high temperature,
the single top eigenvector is the constant function on the set of all spin
configurations, matching the statistical fact that no particular spin
configuration is favored. The model has a global $\bmut=\{\pm1\}$ symmetry,
acting simultaneously on all spins, and the cold phase exhibits
\emph{symmetry breaking}: choosing to live near one or the other of the
distinguished eigenvectors leads to inequivalent spectra of the Hamiltonian on
the Hilbert spaces of states.

\emph{Kramers-Wannier} (KW) duality relates computed quantities in Ising
models at temperatures $T, T^\vee$ related by the formula $\sinh (2\beta)
\sinh(2{\beta}^\vee)=1$.  On a general surface, we must dualize the lattice
as well, so this is only a self-duality of the model on a square planar
lattice. The value $\beta = \beta^\vee = \frac{1}{2}\,\mathrm{arcsinh}(1)$,
fixed under duality, is a candidate for a \emph{critical value}, the
\emph{phase transition} between the high and low temperature phases on the
square lattice. It \emph{must} be the critical value, should there be a
unique such, as was later confirmed by the explicit solution of the
Ising model \cite{O}. A similar line of reasoning applies to the $n$-state
Potts model \cite[\S4.1]{ID}.  At the risk of irritating the expert reader,
we first describe the duality in the traditional way, as a Fourier
transform~(\S{\ref{fourier}}).

There are problems with this naive formulation~(\S\ref{failure}).  Our
resolution of these problems proceeds by coupling the Ising model to purely
topological $3$-dimensional gauge theory for the finite group~$\bmut$.
Previously, gauge theory appeared in the dual of the ungauged abelian
2d~Ising model~\cite[\S10]{KS}, and this can be understood within our
framework.  This approach---the Ising model as a boundary theory for
3-dimensional pure gauge theory---is a strong manifestation of the
$\bmut$-symmetry of the model, and it is the springboard for all that
follows.  The KW duality of the Ising model is now the mapping of boundary
theories under \emph{electromagnetic duality} of finite 3D~gauge theory.
This entire story generalizes to any finite abelian group in place
of~$\bmut$.

In addition to these new insights, our point of view leads to several new
results:

 \begin{enumerate}

 \item We construct a dual to the non-abelian Ising model~(\S\ref{sec:6}).
Here, the Ising side is written in the usual way, albeit with a non-abelian
finite group $G$; whereas the dual side is a state-sum construction of the
partition function from the category of representations of $G$, based on
Turaev-Viro theory.  This duality appears to be new; its most general version
features finite-dimensional semi-simple Hopf algebras.

 \item We give an abstract reformulation of the Ising model in terms of fully
extended topological field theories with a \emph{polarization}, a
complementary pair of boundary theories~(\S\ref{sec:8}).  This places lattice
theories in the context of \emph{fully extended topological field theories},
for which there is a well-developed mathematical theory.  We use it to prove
the Duality Theorem~\ref{thm:8.13}.

 \item We predict the classification of gapped phases of Ising-like models,
which ends up conforming to the \emph{Landau symmetry breaking} paradigm
~(\S\ref{sec:5}). Since Ising theory is defined relative to $3$D topological
gauge theory, so will be any of its gapped topological sectors.  Using the
cobordism hypothesis and a theorem about tensor categories, we
prove that simple, fully extended $2$D topological theories relative to gauge
theory are classified by subgroups of $G$ equipped with a central extension.
As low energy approximations to the Ising model, central extensions can be
ruled out by a positivity assumption on the (exponentiated) action, and this
strongly supports a conjectural classification of the gapped phases of the
theory in terms of subgroups of $G$, to wit, the unbroken symmetry subgroups
of Landau.

 \item We construct generalizations of the Ising model to higher dimensions
and to exotic homology theories~(\S\ref{sec:7}).  In contrast to higher
dimensional Ising models based on ordinary homology theory~\cite[\S6.1]{ID},
the models based on exotic homology theories do not have a natural lattice
formulation.

 \end{enumerate}

Here is a road map to the paper.  We offer the reader an extended executive
summary of our results in~\S\ref{sec:1}.  In~\S\ref{sec:2} we review basic
notions of extended topological field theory, including boundary theories and
domain walls.  Three-dimensional finite gauge theory is the subject
of~\S\ref{sec:3}, with an emphasis on electromagnetic duality and its
nonabelian generalization.  The Ising model as a boundary theory is developed
in~\S\ref{sec:4}, where we derive Kramers-Wannier duality from
electromagnetic duality.  Constraints on low energy effective topological
theories are described in the heuristic~\S\ref{sec:5}.  The remaining parts
of the paper lie squarely in extended topological field theory.
In~\S\ref{sec:9} we illustrate computations in three dimensions, emphasizing
the utility of the {\it regular boundary theory\/} attached to a tensor
category.  The dual to nonabelian Ising is presented in~\S\ref{sec:6}.
Section~\ref{sec:8} provides a more general setting and generalized lattice
models based on Hopf algebras.  We conclude in~\S\ref{sec:7} with a
discussion of higher dimensional theories and electromagnetic duality, with
the main tool another construction in extended field theory: the finite path
integral.

After posting our paper, we learned from P. Severa that our description
in~\S\ref{sec:8} of Kramers-Wannier duality as a bicolored TQFT reproduces
many of his ideas in~\cite{S}.  We have kept the exposition unchanged---even
though there is some repetition of~\cite{S}---both for the reader's
convenience and because the setting of \emph{fully extended} TQFTs, on which
our results rely, requires a different setup.

We thank David Ben-Zvi, John Cardy, Paul Fendley, Anton Kapustin, Subir
Sachdev, Nathan Seiberg, and Senthil Todadri for informative discussions.

{\small\tableofcontents}

   \section{Summary of the paper}\label{sec:1}

This section offers an executive summary of the paper, with the main
definitions and results, in the hope that it will assist the reader in
locating the material of greatest interest.

\csubsection{The Ising model on a latticed surface.}\par\label{isingref}
Choose a finite abelian group $A$ and an \emph{even} function $\theta:A\to
\bR$. The standard case has $A=\bmut=\{\pm1\}$ and $\theta(\pm1) =
e^{\pm\beta}$. Evenness makes the Fourier transform $\theta^\vee$ real-valued
on $A^\vee$. A statistical interpretation is only sensible for positive
$\theta$ (and, dually, $\theta^\vee$), but our discussion does not rely on
this.

We shall view our theories as topological, albeit in a uncommonly broad
sense: the Ising ones involve \emph{latticed} surfaces --- subdivided by a
lattice (embedded graph) $\Lambda$ into faces, each required to be
diffeomorphic to a convex closed planar polygon with at least two
edges.\footnote{From a different vantage point, a lattice is a discrete
analogue of a Riemannian metric.  However, one of our contributions is to
translate the lattice into purely topological data, allowing us to use the
rich structure of TQFT.}  The dual lattice $\Lambda^\vee$ is then defined up
to a contractible space of isotopies, and has the same properties.

Given an oriented and latticed surface $(Y,\Lambda)$ with vertices, edges and
faces indexed by sets $V,E,F$ respectively, we define a measure on the space
of \emph{classical fields} (generalized spins), the maps $s: V\to A$.  The
weight of a field $s$ is the product over all edges $e\in E$ of the
$\theta$-value of the ratio of adjacent spins:

 \begin{equation}\label{measure} 
  Z(s) := \prod\nolimits_{e\in E}
  \theta\bigl(s(\partial_+e)s(\partial_-e)^{-1}\bigr) =
	\exp\left(\beta\sum\nolimits_{e\in
	E}s(\partial_+e)s(\partial_-e)^{-1}\right).
\end{equation} 
 The orientation of the edge $e$, implicit in labeling the $+$ and $-$
endpoints, is irrelevant when $\theta$ is even. The ``partition function"
$Z(A,\theta,\Lambda):=\sum_s Z(s)$ is the sum over all fields. We can insert
functions $f(s)$ within the sum; the resulting numbers $\langle f\rangle
:=\sum_s Z(s)f(s)$ can be interpreted as (un-normalized) correlations in a
statistical mechanical system or in a lattice QFT. A function $f(s)$ is a sum
of monomials $\prod_v f_v(s(v))$ in functions $f_v$ of the values $s(v)\in A$
at specified vertices $v$, and we may restrict the $f_v$ to range over the
non-trivial characters of $A$. The latter are the \emph{order operators},
each labeled by a vertex and a non-trivial character.  There is a unique
order operator at each $v$ when $A=\bmut$.

\csubsection{Kramers-Wannier duality as a Fourier transform.}\par\label{fourier}  To express
$\langle f \rangle$, consider the Pontrjagin dual complexes of $A$-valued
co-chains and $A^\vee$-valued chains for the latticed surface $Y$,
respectively 
 \begin{equation}\label{chains}
	A^V \xrightarrow{\delta^0} A^E \xrightarrow{\delta^1} A^F, \qquad
					(A^\vee)^V \xleftarrow{\partial_1}
					(A^\vee)^E \xleftarrow{\partial_2}
					(A^\vee)^F,
 \end{equation}
placed in cohomological, respectively homological degrees $0,1,2$. Note, on
$A^E$, the two functions $\Theta := \bigotimes_E \theta$, and the
delta-function $\Delta_B$ on the subgroup $B^1(A)
=\mathrm{Im}(\delta^0)$. Then,
 \[ \sum\nolimits_s Z(s) = \#H^0(Y;A)\cdot
\langle \Theta |\Delta_B \rangle, \quad\text{ and }\quad
	\langle f \rangle = \sum\nolimits_s f(s) Z(s)
		= \langle \Theta |\delta^0_*f\rangle, \] 
 where $\delta^0_*f$ is the fiber-wise sum of $f$ along $\delta^0$. Observe
that summation over fields $s\in A^V$ has been replaced by a summation over
edges, implicit in the inner product on functions on $A^E$.

Dually, we have the Fourier transforms $\Theta^\vee = \bigotimes_{e\in E}
{\theta}^\vee$ and the delta-function $\Delta_Z = {\Delta}_B^\vee$ on the
$1$-cycles $Z_1(A^\vee) = \ker\partial_1$.  When $A=\{\pm1\}$,
${\theta}^\vee$ corresponds to the dual value $\beta^\vee$, except for an
overall scaling which rescales the expectation values $\langle
f\rangle$. Parseval tells us that $\langle \Theta |\Delta_B \rangle = \langle
\Theta^\vee | \Delta_Z \rangle$. Interpreting now the second complex in
\eqref{chains} as the cochain complex for the dual lattice,\footnote {An
orientation of $Y$ is needed, if $A\neq\bmut$.} we are close to
equating $Z(A,\theta, \Lambda)$ with $Z(A^\vee,\theta^\vee, \Lambda^\vee)$,
except for the vexing difference between $Z_1(A^\vee) = \ker\partial_1$ and
its subgroup $B_1(A^\vee) = \mathrm{Im}\,\partial_2$, over which the dual
Ising partition function would have liked to sum instead
\cite[\S6.1]{ID}.  We revisit Kramers-Wannier duality from Fourier transforms
in~\S\ref{subsec:4.4}.

\csubsection{Failure of duality.}\par\label{failure} The dual partition functions
fail to agree, with the original side missing a summation over
$H^1(Y;A)$. Duality for $\langle f\rangle $ is worse, as the Fourier
transform of $\delta^0_*f$ is $\partial_1^*({f}^\vee)$, with ${f}^\vee$
computed on $A^V$: we now sum over chains with boundary in the support of
$f^\vee$, not cycles. For example, when $f$ is a ratio of order operators
defined by the character $\chi\in A^\vee$ at the endpoints $\partial_\pm e$
of an edge $e$, $f(s) = \chi(s(\partial_+e)) \cdot
\chi(s(\partial_-e))^{-1}$, we sum $\Theta$ over the translate of $Z_1$ by
$\chi\otimes e$.

The usual escape from this second difficulty uses the language of
\emph{disorder operators}. In the $(A^\vee,\theta^\vee, \Lambda^\vee)$-Ising
model, one interprets summation over $(Z_1+\chi\otimes e)$ as a
\emph{frustrated partition function} with line of frustration $e$. In the
dual lattice, this line joins the centers of the faces $\partial_\pm e$. The
frustration line modifies the weight in \eqref{measure} by a judicious
$\chi$-insertion, as follows: the factor corresponding to the edge $e'$ of
$\Lambda^\vee$ crossing $e$ becomes $\theta^\vee\left(s^\vee
(\partial_+e')\cdot s^\vee(\partial_-e')^{-1} \cdot \chi\right)$. More
generally, for a ratio of order operators at the endpoints of a longer path
$\pi$ in the lattice $\Lambda$, this modification applies to all the edges
$e'$ of $\Lambda^\vee$ which cross $\pi$. A standard calculation
\cite[\S2.2.7]{ID} shows that only the homotopy\footnote{Only homology
matters here, but the homotopy class will be needed in the non-abelian
generalization.} class of $\pi$, relative to $\partial\pi$, affects the dual
computation of $\langle f \rangle$. The entire story is crying out for help
from elementary topology.

\csubsection{Abelian gauge theory in $3$D}\par Our way to clear these faults
consists in viewing the Ising model not as a standalone lattice theory, but
as a boundary theory for $3$-dimensional \emph{pure topological gauge
theory}: the theory which counts principal bundles.  We are coupling the
model to a background gauge field, in physics language.  The relevant
structure groups are $A$ and $A^\vee$, and we call those theories $\cG_A$ and
$\cG_{A^\vee}$.  As we will recall in~\S\ref{sec:3}, \emph{electromagnetic
duality} identifies $\cG_A$ and $\cG_{A^\vee}$ as \emph{fully extended}
TQFTs. Here, let us note that the vector space $\cG_A(Y)$ which $A$-gauge
theory assigns to a closed surface $Y$ comprises the complex functions on the
set $H^1(Y;A)$, the moduli space of (necessarily flat) $A$-bundles on
$Y$. When $Y$ is oriented, Poincar\'e duality places the groups $H^1(Y;A)$
and $H^1(Y;A^\vee)$ in Pontrjagin duality, and the Fourier transform
identifies $\cG_A(Y)$ with $\cG_{A^\vee}(Y)$. The full equivalence of gauge
theories is in fact a ``higher categorical'' version of the Fourier
transform; see~\S\ref{subsec:3.5} and \S\ref{subsec:3.2}.

On a closed latticed surface $Y$, the Ising partition function $Z(A,\theta,
\Lambda)$ can be promoted to a genuine function on the set $H^1(Y;A)$, giving
a vector $\mathbf{Z}(A,\theta, \Lambda)\in \cG_A(Y)$.  Indeed, given a
principal $A$-bundle $P\to Y$, we re-define spins to be sections of $P$ over
the vertex set $V$, rather than maps to $A$; the factors in the measure
\eqref{measure} are still meaningful, thanks to the flat structure of
$P$. (In fact, we only need $P$ to live on the $1$-skeleton of $Y$, an
observation which will come in shortly.)  A typical picture illustrating the
boundary theory has a compact three-manifold $X$ with latticed boundary
$(Y,\Lambda)$. This determines a number, namely the sum of $Z(A,\theta,
\Lambda)$-values on all principal bundles over $Y$ equipped with an extension
to $X$, weighted down by the order of their automorphism groups;
see~\eqref{eq:48}. In the formalism of relative field theory, this is the
pairing of $\mathbf{Z}(A,\theta, \Lambda)$ with the vector $\cG_A(M)\in
\cG_A(Y)$ defined by the TQFT $\cG_A$.

\begin{remark}\label{orderpair} Gauging the Ising model destroys our original
order operators: the spins at a vertex $v$ take values in an $A$-torsor, and
we cannot evaluate characters thereon.  This is part of the medicine, though:
the dual disorder operators come in opposite pairs, joined by a path (up to
homotopy). This setup also works for the order operators: parallel transport
along a lattice path identifies the torsors at different vertices $v,v'$, and
the ratio $s(v)s(v')^{-1}$ of two spins becomes a well-defined element
of $A$, on which we \emph{can} evaluate characters. This phenomena are
illuminated and resolved by the notion of \emph{defects}, below.
\end{remark} 

\csubsection{Defects.}\par There are two distinguished types of defects in finite
gauge theory \cite{Wi, tH}; in dimension~$3$, they are both
$1$-dimensional. \emph{Wilson loops} are labeled by characters $a^\vee$ of
the gauge group $A$, and change the count of principal bundles on a closed
manifold, re-scaling each by the value of~ $a^\vee$ on the holonomy along the
loop. The other distinguished defect, a \emph{'t Hooft loop},\footnote{Often
the term `vortex loop' is used instead.  't Hooft lines may be more familiar
to field theorists in $4$-dimensional gauge theory with connected gauge
group~$G$, where they are labeled by elements of $\pi_1G$. The codimension~
$2$ loops here are labeled by $\pi_0G$; in either case, they form a
Poincar\'e dual representative of a gerbe, obstructing the extension of a
principal $G$-bundle defined on the complement of the loop.}  is labeled by
an element $a\in A$. Instead of changing the measure, it modifies the space
of classical fields, from principal $A$-bundles to principal $A$-coverings
ramified along the loop, with normal monodromy $a$.  We describe these
defects in~\S\ref{subsec:3.3}.  We will see later (\S\ref{sec:8}) that these
two types of defects are naturally associated to the \emph{Dirichlet}
(gauge-fixing), respectively \emph{Neumann} (free) boundary conditions of
gauge theory. A beautiful feature of topological electromagnetic duality in
$3D$ is the interchange of these boundary conditions, hence of the Wilson and
't~Hooft loops.

On a manifold with boundary, Wilson and 't Hooft lines need not close up:
they can end in Wilson and 't~Hooft \emph{point defects} on the boundary. (If
the manifold has corners, defects must be interior points of the boundary
surface.) A surface $Y$ with (colored) Wilson and 't~Hooft defect divisors
$W,t$ leads to a modified space of states $\cG_A(Y;W,t)$. We refer to
\S\ref{subsec:3.3} for details, but note here that $\cG_A(Y;W)$ comprises
gauge invariant functions on $A$-bundles valued in the tensor product
$W_\otimes$ of the representations coloring the Wilson divisor $W$; whereas
in building $\cG_A(Y;t)$, we replace principal $A$-bundles on $Y$ with
principal covers ramified at $t$, as specified by the 't~Hooft labels.

\csubsection{Defect cancellation.}\par\label{defectcancel} If $W$ consists of
\emph{pairs} of dually colored points joined by paths, we can trivialize
$W_\otimes$ (using parallel transport) and identify $\cG_A(Y)$ with
$\cG_A(Y;W)$. Dually, ramified $A$-covers are classified by a torsor over
$H^1(Y;A)$, built from $A$-co-chains with co-boundary Poincar\'e dual to
$t$. Writing $t$ as a boundary on $Y$ supplies a base-point in the torsor and
identifies $\cG_A(Y;t)$ with $\cG_A(Y)$. The two maps relating the original
and defective spaces are induced by a uniform picture: a cylinder bordism
$Y\times[0,1]$, with defect on the top face only, closed up by buried defect
cables under the connecting paths on $Y$. We see from here that the requisite
structure is a null-bordism of the defect, and the system of paths must be
provided with over/under-crossing data.  \\ \emph{Caution.} In the
non-abelian generalization, a null-bordism gives a pair of adjoint maps
between the defective and neat spaces, but they are not isomorphisms.

\csubsection{Order and disorder defects.}\par\label{orderdef} Place, on latticed
surface, Wilson defects at certain vertices and 't~Hooft ones inside certain
faces. Enhance the Wilson defects to order operators by supplying them with a
vector in the representation at each point.\footnote{This step can be
concealed on a closed surface, because $\cG_A(Y;W)=0$ unless $W_\otimes\cong
\bC$, which carries a preferred vector; it is also invisible for 't~Hooft
defects, where we can use a canonical vector $1\in\bC$.} We can now build a
distinguished Ising partition function $\mathbf{Z}(A,\theta,\Lambda,W,t)$ in
the defective space $\cG_A(Y;W,t)$ by a state-sum recipe adapted from
\S\ref{isingref}: the product $f_W$ of Wilson order operators is valued in
$W_\otimes$ and the measure \eqref{measure} is sensibly defined, as the
ramification avoids nodes and edges; see~\S\ref{subsec:4.3}. A complete $3$D
picture with boundary will have a bulk $3$-manifold $M$, with Wilson lines
ending in $W$ and 't~Hooft ones in $t$; we obtain a number by pairing
$\cG_A(M)$ with $\mathbf{Z}(A,\theta,\Lambda,W,t)$, or by counting ramified
covers and sections over the nodes, with Ising and Wilson weights.

\csubsection{General Ising correlators.}\par  A null-bordism of the defect
(\S\ref{defectcancel}) identifies $\cG_A(Y;W,t)$ with $\cG_A(Y)$, but the two
Ising partition functions do not match. Instead,
$\mathbf{Z}(A,\theta,\Lambda,W,t)$ becomes, in the original space $\cG_A(Y)$,
the frustrated expectation value of $f_W$, converted to a function
(Remark~\ref{orderpair}) and with lines of frustration the connecting paths
for $t$.  This is our relative field theory reading of Ising correlators, in
purely topological setting.

\csubsection{Duality restored.}\par  The first failure of Kramers-Wannier duality
in \S\ref{failure} is repaired by saying that the gauged Ising partition
functions $\mathbf{Z}(\Lambda,A,\theta)$ and
$\mathbf{Z}(\Lambda^\vee,A^\vee,\theta^\vee)$, now functions on $H^1(Y;A)$
and $H^1(Y;A^\vee)$, are related by the Fourier transform.  The most general
duality involves the 't~Hooft and Wilson defects: the Fourier transform
identifies the spaces $\cG_A(Y;W,t)$ and $\cG_{A^\vee}(Y; t^\vee, W^\vee)$
and the Ising partition functions within. We summarize it in our first
theorem.

\begin{theorem}\label{maintheorem} Electromagnetic duality for $3$D finite
abelian gauge theory extends to the Ising boundary theories with Fourier dual
actions $\theta, \theta^\vee$, where it becomes Kramers-Wannier duality.
Order operators of the Ising model are based at Wilson defects, and disorder
operators at 't~Hooft defects. They get interchanged under duality.
\end{theorem}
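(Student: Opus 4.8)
The plan is to deduce the theorem from two ingredients supplied later: the identification in \S\ref{sec:3} of electromagnetic duality as an equivalence $\cG_A\simeq\cG_{A^\vee}$ of fully extended $3$-dimensional TQFTs — a ``higher'' Fourier transform which on a closed oriented surface $Y$ reduces, via the Poincar\'e pairing putting $H^1(Y;A)$ and $H^1(Y;A^\vee)$ in Pontrjagin duality, to the ordinary Fourier transform $\Fun\bigl(H^1(Y;A)\bigr)\xrightarrow{\ \sim\ }\Fun\bigl(H^1(Y;A^\vee)\bigr)$ — and the construction in \S\ref{sec:4} of the Ising boundary theory, i.e.\ the state-sum vectors $\mathbf Z(A,\theta,\Lambda)\in\cG_A(Y)$ and their defective refinements $\mathbf Z(A,\theta,\Lambda,W,t)\in\cG_A(Y;W,t)$. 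Granting these, the content of the theorem is that the equivalence carries the $(A,\theta)$ boundary theory to the $(A^\vee,\theta^\vee)$ one, exchanging Wilson and 't~Hooft defects; unwinding this on partition functions and correlators is exactly Kramers--Wannier duality and the order/disorder interchange.

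\emph{Step 1: the undefected statement.} I would compute $\widehat{\mathbf Z(A,\theta,\Lambda)}$ pointwise over $H^1(Y;A)$. By the recipe of \S\ref{sec:4}, for a flat bundle with class $c\in H^1(Y;A)$ the value $\mathbf Z(A,\theta,\Lambda)(c)$ equals, up to the normalisation $\#H^0(Y;A)$ and an overall scalar, the pairing $\langle\Theta\mid\Delta_{B^1(A)+\tilde c}\rangle$ of $\Theta=\bigotimes_E\theta$ against the delta function on the affine subgroup $B^1(A)+\tilde c\subset A^E$ for any cocycle $\tilde c$ representing $c$ — formula \eqref{measure} with spins read as sections of the bundle. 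Fourier transform on $A^E$ sends $\Theta$ to $\Theta^\vee$ and the delta function of $B^1(A)+\tilde c$ to (a phase times) the delta function of $Z_1(A^\vee)+\tilde c^\vee$, the shift being governed by the Poincar\'e pairing; summing the Parseval identity of \S\ref{fourier} over $H^1$ — equivalently, recognising that the summation over $H^1(Y;A)$ whose absence was the first failure in \S\ref{failure} is precisely the one now supplied by the gauge coupling — gives $\widehat{\mathbf Z(A,\theta,\Lambda)}=\mathbf Z(A^\vee,\theta^\vee,\Lambda^\vee)$ once the second complex of \eqref{chains} is read as the cochain complex of the dual lattice. To upgrade this from a single surface to an identity of boundary theories one checks naturality in the bordism category, a finite path-integral computation.

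\emph{Step 2: defects.} By \S\ref{sec:3}, electromagnetic duality interchanges the Dirichlet (gauge-fixing) and Neumann (free) boundary conditions of gauge theory, hence Wilson and 't~Hooft lines: a Wilson defect of $\cG_A$ colored by $\chi\in A^\vee$ is carried to a 't~Hooft defect $W^\vee$ of $\cG_{A^\vee}$ with label $\chi$, while a 't~Hooft defect of $\cG_A$ colored by $a\in A$ goes to a Wilson defect $t^\vee$ of $\cG_{A^\vee}$ colored by $a\in A\cong(A^\vee)^\vee$; this yields the identification $\cG_A(Y;W,t)\cong\cG_{A^\vee}(Y;t^\vee,W^\vee)$. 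Running the pointwise Fourier computation of Step~1 with the defective state-sum of \S\ref{sec:4}: the Wilson order operators contribute a character insertion $f_W$ into $\langle\Theta\mid-\rangle$, whose Fourier transform is the shift analysed in \S\ref{failure}, i.e.\ exactly the line-of-frustration modification of the dual weight — a 't~Hooft insertion on the $(A^\vee,\theta^\vee)$ side; dually, the 't~Hooft ramification $t$, which shifts the affine subgroup summed against $\Theta$, Fourier-transforms to a character insertion along the Poincar\'e-dual path, a Wilson order operator on the dual side. Matching the over/under-crossing and null-bordism data of \S\ref{defectcancel} on the two sides then identifies $\mathbf Z(A,\theta,\Lambda,W,t)$ with $\mathbf Z(A^\vee,\theta^\vee,\Lambda^\vee,t^\vee,W^\vee)$. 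By Remark~\ref{orderpair} this is the interchange of order operators (vectors at Wilson defects) with disorder operators (at 't~Hooft defects), which is the last sentence of the theorem.

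\emph{Main obstacle.} The honest proof must pass from the manifestly lattice-dependent, choice-laden combinatorial $\mathbf Z$ to something living in, and natural for, the fully extended TQFT $\cG_A$: one shows the lattice $\Lambda$ can be replaced by topological data well-defined up to a contractible space of choices, that the assignment is a morphism of extended theories rather than merely a surface-by-surface equality of numbers, and that the purely gauge-theoretic equivalence $\cG_A\simeq\cG_{A^\vee}$ actually transports this morphism — this is the work deferred to \S\ref{sec:3}--\S\ref{sec:4}. The defect version adds the subtlety, flagged in the Caution of \S\ref{defectcancel}, that a null-bordism of the defect produces isomorphisms only in the abelian case (adjoint, non-invertible maps in general); even here one must track the crossing data through the Fourier transform carefully, so that the identification $\cG_A(Y;W,t)\cong\cG_{A^\vee}(Y;t^\vee,W^\vee)$ is precisely the one induced by duality and not some other reshuffling of the labels.
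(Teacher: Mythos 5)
Your proposal is correct and follows essentially the same route as the paper: \S\ref{subsec:4.4} likewise writes the gauged Ising vector (with order/disorder data $\omega ,\eta $) as a twisted pushforward $\#H^0(Y;A)\cdot p_*^\omega j_\eta ^*\Theta $ along $B^1\hookrightarrow Z^1\twoheadrightarrow H^1$ and applies the compatibility of the finite Fourier transform with pushforward, pullback, and torsor shifts (Lemmas~\ref{thm:27} and~\ref{thm:61}, culminating in Proposition~\ref{thm:62} and \eqref{eq:152}) — which is your pointwise-plus-Parseval computation organized functorially. The paper also concedes exactly the gap you flag in your ``main obstacle'' paragraph (Remark~\ref{thm:64}): that electromagnetic duality induces this particular Fourier transform, and that the identity holds at the level of full boundary theories, is deferred to the bicolored-TQFT argument of \S\ref{subsec:8.1}.
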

 
\noindent
 We give the proof in~\S\ref{subsec:4.4} and~\S\ref{subsec:8.1}.

\csubsection{Symmetry breaking on low-energy states.}\par We can now resolve
another puzzle of the KW duality, the mismatched symmetry breaking. The low
temperature regime has \emph{two} vacua, or lowest-energy states,
interchanged by the global spin-change symmetry.  In the large lattice limit,
there will be two distinct Hilbert spaces of states near the two vacua: the
$\bmut$-symmetry is thus broken, no longer acting on the
separate spaces of states. The high-temperature phase has a unique vacuum,
the constant function on all spin states, invariant under the
$\bmut$ global symmetry, which continues to act on the Hilbert
space.

This mismatch appears to contradict KW duality, but again the problem is
cleared by coupling to a background principal bundle. There are two principal
$\bmut$-bundles over the circle, up to isomorphism, and we have
just examined the trivial one. The twisted sector shows a different mismatch
between the phases: at low temperature, there is no contribution to the
topological sector --- the lowest energy is greater than the (untwisted)
vacuum energy, as there is no constant-spin state.  At high temperature
however, we still get the constant function on all spin configurations as the
unique vacuum. The two vacua at high temperature split over the two sectors
$\{\pm1\}$, instead of the two representations of $\bmut$.  See the
discussion at the end of~\S\ref{sec:5}.

This is consistent with $3$D electro-magnetic duality. Specifically,
$\cG_{\bmut}$ assigns to the circle the category of
$\bmut$-equivariant vector bundles over $\bmut$,
for the trivial action.\footnote{This becomes the conjugation action, when
$\bmut$ is replaced by a non-abelian structure group.} An object
breaks up into four components, each labeled by one of the two
representations of $\bmut$ and one of the two twisted
sectors. Duality swaps the nature of the labels. The topological sector of
Ising theory is an object in this category: at low temperature, it is the
regular representation of $\bmut$, spanned by the two vacua
living over the trivial sector, while at high temperature it is the dual
object, with a copy of the trivial representation in each sector.

\csubsection{Non-abelian Ising model.}\par It is straightforward to generalize
half of this story to a non-abelian finite group $G$, equipped with an even
function $\theta:G\to \bR$, and indeed we treat arbitrary finite groups from
the beginning in~\S\ref{sec:4}. The measure is defined by the same formula
\eqref{measure}, summed over all fields; moreover, we can couple this to the
pure $G$-gauge theory $\cG_G$. This theory carries Wilson and 't~Hooft loops,
as in the Abelian case. The former are labeled by representations, and they
weight the measure on the space of fields (flat $G$-bundles) by the trace of
the holonomy around the loop. The latter modify the space of principal
$G$-bundles into a space of ramified principal covers. Defect lines may end
in defect points on a boundary surface. If we place a lattice theory on the
boundary, Wilson defects may be sited at nodes and 't~Hooft ones inside faces
of the lattice, where they appear as order/disorder
operators.\footnote{Recall from \S\ref{orderdef} that an order operator is
labeled by the representation \emph{and} a choice of vector therein.}
However, formulating the dual side without a Pontrjagin dual group requires a
step into abstraction.

\csubsection{Nonabelian Electromagnetic duality.}\par  The abelian gauge theory
$\cG_A$ is generated by the tensor category $\mathscr{V}ect[A]$ of $A$-graded
vector bundles \cite{FHLT}, and $\cG_{A^\vee}$ by $\mathscr{V}ect[ A^\vee]$.
Now, $A^\vee$-graded vector bundles are precisely the (spectrally decomposed)
representations of $A$, with their tensor product structure. For any finite
$G$, the tensor category $\mathscr{R}ep(G)$ defines a fully extended TQFT
$\cR_G$: it can be constructed by a state-sum recipe due to Turaev and Viro
\cite{TV}, applicable to any \emph{fusion category}\footnote {See \cite{EGNO}
for an account of fusion categories.} $\mathscr{T}$;
see~\S\ref{subsec:6.1}. For $\mathscr{T}=\mathscr {V}ect[G]$, the recipe
yields $\cG_G$ in familiar bundle-counting form. Applied to $\mathscr{T}=
\mathscr{R}ep(G)$, the recipe looks different; yet it produces a canonically
equivalent theory on oriented manifolds. This is \emph{non-abelian
electromagnetic duality}; abstractly, it expresses the Morita equivalence of
tensor categories $\mathscr{V}ect[G]\simeq\mathscr{R}ep(G)$ \cite{EGNO}, and
is a non-abelian version of the (doubly categorified) Fourier transform.  We
give a proof in~\S\ref{subsec:3.5} based on the cobordism hypothesis.

\csubsection{Duality for defect lines.}\par  Wilson defects can be defined in any
Turaev-Viro theory. Let us spell this out algebraically, postponing a
conceptual account until \S\ref{abstractkoszul}. We will place a mild
constraint on $\mathscr{T}$ (a \emph{pivotal structure}) to secure the
orientability of our theory on circles and surfaces; without that, the need
for Spin structures adds a layer of complexity to the story.  For $3$D
theories of oriented manifolds, line defects are determined by objects in the
category associated to the circle, which is the \emph{Drinfeld center
$Z(\mathscr{T})$} of $\mathscr{T}$.  For both $\mathscr{T}=\mathscr{V}ect[G]$
and $\mathscr{T}=\mathscr{R}ep(G)$, $Z(\mathscr{T})$ comprises the
conjugation-equivariant vector bundles over $G$. There is a trace
functor\footnote {The natural target of the trace is the \emph{co-center} of
$\mathscr{T}$; the pivotal structure of $\mathscr{T}$ identifies that with
$Z(\mathscr{T})$. In TQFT language, the trace appears as an
\emph{open-closed} map \cite{MS}.}  $\mathscr{T}\to Z(\mathscr{T})$,
and Wilson defects are components of the trace of the tensor unit of
$\mathscr{T}$.  The unit in $\mathscr{V}ect[G]$ is the line supported at
$1\in G$, and its image in $Z$ is the regular representation supported at
$1$, the push-forward from a point to $BG$.  For
$\mathscr{T}=\mathscr{R}ep(G)$, the trace pulls back $G$-representations to
$G$-equivariant bundles over $G$, and takes the unit representation to the
trivial line bundle. Its decomposition into conjugacy classes gives the
former 't~Hooft defects of $\cG_G$ as Wilson defects of $\cR_G$.

Defining 't~Hooft defects in Turaev-Viro theory requires the additional
structure of a \emph{fiber functor $\phi$} on the tensor category
$\mathscr{T}$. This determines another object in $Z(\mathscr{T})$, the trace
of the identity in the endofunctor category of $\phi$. Its components are the
't~Hooft defects we seek. Each of $\mathscr{V}ect[G]$ and $\mathscr{R}ep(G)$
has a natural fiber functor, the global sections of a bundle and the
underlying space of a representation, respectively. This time, starting with
$\mathscr{T}=\mathscr{V}ect[G]$, the endofunctor category of $\phi$ is
$\mathscr{R}ep(G)$, and we discover in $Z(\mathscr{T})$ the earlier-described
't~Hooft defects of gauge theory; whereas $\mathscr{T}=\mathscr{R}ep(G)$ with
its obvious fiber functor leads instead to the original Wilson lines.  We
develop these ideas in~\S\ref{subsec:8.2a}.

\csubsection{Topological phases of Ising theories.}\par  If the action $\theta$
is such that the theory is \emph{gapped}, we expect Ising theory to converge
to a topological field theory in the thermodynamic (large lattice) limit, as
we explore in~\S\ref{sec:5}. The structure we uncovered forces the limiting
theory to be a boundary theory for pure $3$D gauge theory. Now, in the
setting of fully extended TQFTs, the boundary theories for gauge theory can
be classified as (sums of) simple ones, each defined by symmetry breaking
down to a subgroup $H$ of $G$ together with a central extension of $H$. The
central extension contributes a ``discrete torsion'' term; any such will
involve signs or complex numbers, which cannot appear for positive actions
$\theta$. This strongly suggests that the topological phases of Ising-style
theory with group $G$ are classified by their unbroken symmetry subgroups. We
get a topological theory on the nose if $\theta$ is the characteristic
function on $H$: the transfer matrix then becomes a (scaled) projector onto
the space of vacua, which gets identified with the functions on $G/H$.

\csubsection{Ising vector in the Turaev-Viro space of states.}\par  Let us
demystify the nonabelian Ising model by sketching here the construction of
the space of states $\cF_\mathscr{T}(Y)$ for a closed latticed surface
$(Y,\Lambda)$ in Turaev-Viro theory, to be enriched by the construction of a
distinguished vector, the Ising partition function, once we supply a fiber
functor and an Ising action $\theta$. The space $\cF_\mathscr{T}(Y)$ depends
on $Y$ alone, but its construction steps through a larger,
$\Lambda$-dependent space, $\VAA(Y;\Lambda)$ where the Ising
partition function resides.  Details are found in \S\ref{sec:6}.

Choose once and for all a basis $\{x_i\}$ of simple objects in the fusion
category $\mathscr{T}$.  Orient the edges of $\Lambda$ and label them with
simple objects. For each face $f$ with bounding edges labeled by $x_j$,
build the space $H_f:=\mathrm{Hom}_\mathscr{T}(\mathbf{1},\otimes x_j^
{(\vee)})$, with tensor factors cyclically ordered along the boundary
$\partial f$, and dualized whenever our edge orientation disagrees with the
$\partial f$ orientation. We sum the tensor products $\bigotimes_f H_f$ over
all labelings to produce $\VAA(Y;\Lambda)$. This is a version of
$\cF_\mathscr{T}(Y)$ with `gauge-fixing' at the $\Lambda$-vertices. To remove
the gauge-fixing, we will recall in \S\ref{subsec:6.1} the construction of a
commuting family of projectors --- one for each vertex of $\Lambda$ --- which
enforce gauge invariance, in the sense that their common image is
$\cF_\mathscr{T}(Y)$.

The dual space $H_f^*$ is mapped by the fiber functor $\phi$ to the dual
of $\bigotimes_f \left(\otimes \phi(x_j)^{(*)}\right)$. Each space
$\phi(x_j)$ appears here in a dual pair, for the two faces bounded by its
edge. A choice of Ising action $\theta\in \bigoplus_i
\phi(x_i)\otimes\phi(x_i)^*$ defines,\footnote{To avoid dependence on our
choice of edge orientations, the action $\theta$ must be symmetric under the
involution $x\leftrightarrow x^\vee$.} by contraction, a functional on the
last space and hence on $H_f^*$. Dualizing it gives a vector in
$\VAA(Y;\Lambda)$. Projected to $\cF_\mathscr{T}(Y)$, this is the
Ising partition function; see~\S\ref{subsec:6.2}.

\csubsection{Non-abelian Kramers-Wannier duality.}\par  In \S\ref{sec:8}, we
complete the above constructions to a lattice boundary theory for the $3$D
Turaev-Viro theory $\cR_G$, equipped with order and disorder operators at the
ends of the Wilson and 't~Hooft lines. The most general story pertains to
finite-dimensional semi-simple Hopf algebras (\S\ref{hopf} below), but let us
summarize it now for a general finite group $G$.  

   \begin{theorem} 
 Theorem \ref{maintheorem} generalizes to a duality between the gauge theory
of a finite group $G$ and the Turaev-Viro theory based on $\mathcal{R}ep(G)$,
and a Kramers-Wannier duality of their lattice boundary theories. There is an
interchange of Wilson and 't~Hooft defects in the bulk theories, and of order
and disorder operators for the boundary, and the Ising partition functions
agree.\footnote {Possibly after normalization by an overall constant.}
  \end{theorem}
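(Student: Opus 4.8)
The plan is to deduce this theorem from the already-established non-abelian electromagnetic duality $\cG_G\simeq\cR_G$ (the Morita equivalence $\Vect[G]\simeq\Rep(G)$, proved in~\S\ref{subsec:3.5} via the cobordism hypothesis) by the same mechanism that proves Theorem~\ref{maintheorem} in the abelian case: transporting the Ising boundary data through the duality and identifying what it becomes on the other side. First I would recall the precise package assembled in the executive summary: the Ising model with group $G$ and even action $\theta$ is realized, on a closed latticed surface $(Y,\Lambda)$, as a distinguished vector $\mathbf Z\in\VAA(Y;\Lambda)$ which projects to a vector in the gauge-theory state space $\cF_{\Vect[G]}(Y)=\cG_G(Y)$; and, more structurally, as a \emph{boundary theory} for $\cG_G$, built from the Dirichlet (gauge-fixing) boundary condition decorated by $\theta$. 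The content of the theorem is then threefold: (i) under $\cG_G\simeq\cR_G$, this boundary theory goes to a boundary theory for $\cR_G$ which admits a \emph{lattice} (state-sum) description of exactly the $\VAA$-type reviewed in the summary, with the roles of the fusion category $\Vect[G]$ and its fiber functor $\phi$ swapped for $\Rep(G)$ and its fiber functor; (ii) the Wilson/'t~Hooft line defects, which in~\S\ref{subsec:8.2a} are characterized purely in terms of the pair $(\mathscr T,\phi)$ inside $Z(\mathscr T)$, are interchanged because passing from $(\Vect[G],\phi)$ to $(\Rep(G),\phi^\vee)$ exchanges ``trace of the unit object'' with ``trace of the identity of the fiber-functor endocategory''; and hence (iii) the order operators (vectors at Wilson endpoints) and disorder operators ('t~Hooft ramification endpoints) are exchanged, and the decorated partition-function vectors correspond under the induced isometry $\cG_G(Y;W,t)\cong\cR_G(Y;t^\vee,W^\vee)$, up to the overall normalization constant allowed in the footnote.

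The key steps, in order. \textbf{Step 1.} Recast everything categorically: by~\S\ref{subsec:6.1}--\S\ref{subsec:6.2}, for any fusion category $\mathscr T$ with pivotal structure and a fiber functor $\phi$, the state-sum space $\VAA(Y;\Lambda)$, its vertex projectors onto $\cF_{\mathscr T}(Y)$, the Wilson defects (components of $\Tr(\bone)$ in $Z(\mathscr T)$), the 't~Hooft defects (components of $\Tr(\id_\phi)$ in $Z(\mathscr T)$), and the Ising vector attached to $\theta\in\bigoplus_i\phi(x_i)\otimes\phi(x_i)^*$ are all \emph{manifestly functorial} in the pair $(\mathscr T,\phi)$ and its Morita equivalences. \textbf{Step 2.} Verify that the Morita equivalence $\Vect[G]\simeq\Rep(G)$ is compatible with the relevant fiber functors in the sense that it carries $(\Vect[G],\phi_{\mathrm{sections}})$ to $(\Rep(G),\phi_{\mathrm{underlying}})$ \emph{after} the interchange: concretely, $\Rep(G)$ is the endofunctor category $\End_{\Vect[G]}(\phi)$ and, symmetrically, $\Vect[G]$ is recovered as $\End_{\Rep(G)}(\phi')$, so the two fiber functors are ``dual'' across the equivalence. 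This is the statement underlying the last paragraph of~\S\ref{subsec:8.2a} and I would present it as the Hopf-algebraic duality $H\leftrightarrow H^\vee$ for $H=\bC[G]$, which also sets up the general Hopf-algebra version announced in~\S\ref{hopf}. \textbf{Step 3.} Apply Step~1's functoriality to the equivalence of Step~2 to conclude that $\cR_G$'s Ising boundary theory is the image of $\cG_G$'s, that the line-defect labels match under Drinfeld-center transport $Z(\Vect[G])\simeq Z(\Rep[G])$ (both being $G$-equivariant vector bundles on $G$), and that within this identification ``trace of $\bone$'' and ``trace of $\id_\phi$'' — i.e.\ Wilson and 't~Hooft — are swapped. \textbf{Step 4.} Track the partition-function vector: on $Y$, $\mathbf Z$ is built by contracting $\theta$ against the $H_f^*$-data; since $\theta$ lives in $\bigoplus_i\phi(x_i)\otimes\phi(x_i)^*$ and the equivalence identifies this space with the corresponding space for $(\Rep(G),\phi')$ (the $\theta\leftrightarrow\theta^\vee$ Fourier transform of the summary), the contracted vectors correspond; projecting through the (matching) vertex projectors gives equality in $\cF(Y)$ up to a scalar coming from the normalization of the equivalence and of $\phi$ — which is the footnoted constant. \textbf{Step 5.} Upgrade from closed $Y$ to the full relative/boundary-theory statement by the same bordism argument as in~\S\ref{defectcancel}: a cylinder bordism with defect on one face exhibits the comparison maps, and the isometry $\cG_G(Y;W,t)\cong\cR_G(Y;t^\vee,W^\vee)$ is the defect-decorated version of $\cG_G(Y)\cong\cR_G(Y)$.

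\textbf{Main obstacle.} The hard part will be Step~2--Step~3: the abelian proof rests on Pontrjagin duality $A\leftrightarrow A^\vee$ making the Fourier transform literally an isomorphism of function spaces, whereas in the non-abelian case the ``dual group'' does not exist and one must instead work with the Morita/Drinfeld-center picture, where the fiber functor $\phi$ is genuinely extra data and the endofunctor-category construction $\mathscr T\rightsquigarrow\End_{\mathscr T}(\phi)$ must be shown to be an involution intertwining the trace functors correctly. In particular, establishing that ``$\Tr(\bone_{\mathscr T})$ for $(\mathscr T,\phi)$'' equals ``$\Tr(\id_{\phi'})$ for $(\End_{\mathscr T}(\phi),\phi')$'' as objects of the common Drinfeld center — i.e.\ that Wilson and 't~Hooft really do change places and are not merely abstractly isomorphic — is where the real work lies, and where the \emph{Caution} of~\S\ref{defectcancel} (null-bordisms give adjoint pairs, not isomorphisms, non-abelianly) has to be handled with care; the pivotal structure on $\mathscr T$ is exactly what rescues orientability and lets the trace functor land in $Z(\mathscr T)$ rather than only in the co-center. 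The cobordism-hypothesis input from~\S\ref{subsec:3.5} does most of the heavy lifting for the bulk equivalence, so the remaining novelty is the bookkeeping of the boundary/defect decorations through it, which I expect to be technical but not conceptually obstructed once Step~2 is in place. The Hopf-algebra generalization then follows by replacing $\bC[G]$ with an arbitrary finite-dimensional semisimple Hopf algebra $H$ and $\phi$ with its canonical fiber functor, since every step above used only the structure of a fusion category with a fiber functor, i.e.\ (Tannakian reconstruction) a semisimple Hopf algebra and its dual.
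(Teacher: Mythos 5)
Your plan is viable and rests on the same engine as the paper's proof---the Morita equivalence $\Vect[G]\simeq\Rep(G)$ interchanging the regular (Dirichlet) boundary condition with the fiber functor (Neumann) one, after which everything else is bookkeeping---but the execution in \S\ref{subsec:8.1} is organized around a geometric device you do not use, and that device is precisely what dissolves the two points you flag as ``the real work.'' The paper converts the lattice $\Lambda$ into a self-indexing Morse function $f$ and bicolors the handle decomposition of $Y$ by $\rB$ and $\rB'$ (\S\ref{subsec:8.1a}, Figure~\ref{sinsin}); the swap of boundary conditions induced by the Morita equivalence is then realized on the surface by replacing $f$ with $d-f$. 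This single move (a)~produces the \emph{dual lattice} $\Lambda\dual$ automatically---a point absent from your write-up, yet essential, since Kramers--Wannier duality is a statement comparing $\Lambda$ with $\Lambda\dual$, and in your functorial transport you would still have to prove that the transported boundary theory is the state-sum model on $\Lambda\dual$ rather than on $\Lambda$; (b)~exchanges the handle indices $p\leftrightarrow d-p$, so the Wilson and 't~Hooft defects and the order and disorder operators (defined in Definition~\ref{abstractorderdef} at critical points of index $0$ and $2$) are interchanged by construction, with no need for the delicate comparison of ``$\mathrm{Tr}(\bone)$ versus $\mathrm{Tr}(\id_\phi)$'' in the Drinfeld center that you single out as the main obstacle; and (c)~implements the Fourier transform $\theta\leftrightarrow\theta\dual$ on the Ising action as the quarter-rotation of the bicolored $1$-handle boundary (the antipode/Frobenius-form discussion around Figure~\ref{quadratics}), rather than as a separate identification of $\bigoplus_i\phi(x_i)\otimes\phi(x_i)^*$ across the equivalence. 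So your Steps~2--3 would require genuine additional argument as stated, whereas in the paper's formulation they are immediate; if you adopt the Morse-function bicoloring, your outline collapses to the paper's proof, and your closing remark about the Hopf-algebra generalization then goes through exactly as the paper states it.
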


\noindent 
 We provide a proof in~\S\ref{subsec:8.1}.

\csubsection{Lattice theory from a bicolored TQFT}\par\label{polarized}
 Underlying our non-abelian generalization is a reformulation of the gauged
Ising model in the language of extended field theory, boundaries and defects;
we describe it in~\S\ref{subsec:8.2a}.  Abstractly, we start with a bulk $3$D
topological field theory $\cF$ and two boundary theories $\rB, \rB'$; the
$G$-valued Ising story uses $\cF =\cG_G$ with its Dirichlet and Neumann
boundary conditions.  Algebraically, $\cG_G$ is generated by the tensor
category $\mathscr{V}ect[G]$, while the boundary theories are defined by the
left module category $\mathscr{V}ect[G]$ and the fiber functor of global
sections to $\mathscr{V}ect$. These theories satisfy the condition that
$\mathrm{Hom}_\cF(\rB,\rB')$ is the trivial $2$-dimensional theory
($G$-bundles on an interval which are based at one end and free at the other
are canonically trivialized), which determines a canonical defect $\rD$
between the boundary theories $\rB$ and $\rB'$.

\begin{remark}[Polarization of $\cF$]
 Our gauge theory quadruple satisfies the extra condition that each of
$\rB,\rB'$ is a \emph{generating} boundary theory for $\cF$ (see
Remark~\ref{gen} below for explanation.)  Generation and complementarity make
the pair $(\rB,\rB')$ akin to a \emph{polarization} of $\cF$, analogous with
the chiral and anti-chiral boundary conditions of Chern-Simons theory of the
WZW model, or the more general respective picture in rational conformal field
theory.  Our construction converts lattice models into a discrete,
topological analogue of that famous construction.
 \end{remark}  

\begin{remark}\label{gen}
 When $\cF=\cF_\mathscr{T}$ is generated by a \emph {multi-fusion
category}\footnote{A finite, rigid, semi-simple category; assuming simplicity
of the unit object makes it \emph{fusion}.}  $\mathscr{T}$, any boundary
theory comes from a finite module category $\mathscr{M}$, and the generation
condition is that $\mathscr{M}$ induces a Morita equivalence of $\mathscr{T}$
with its centralizer $\mathscr{E}=\mathrm{End}_\mathscr{T}(\mathscr{M})$.
(This is equivalent to the \emph{faithfulness} of \cite{EGNO}. For fusion
categories, all non-zero module categories are faithful.) The regular
$\mathscr{T}$-module $\mathscr{T}$ always generates; for $\cG_G$, the
generating property of the Neumann condition is the Morita equivalence
$\mathscr{V}ect[G]\simeq\mathscr{R}ep(G)$.
 \end{remark}

We now re-interpret our relative Ising model on a latticed surface
$(Y,\Lambda)$ as a multi-layered topological picture. From the lattice
$\Lambda$, build a self-indexing Morse function $f$ on $Y$ with minima at the
vertices, saddle points at the edge centers and maxima centered in the
faces. Color the set $0\le f <1$ by the Dirichlet boundary condition $\rB$
and the set $1< f \le 2$ by the Neumann condition $\rB'$. The level set $f=1$
is colored by the defect $\rD$.

This is not quite a valid TQFT picture --- the defect lines cross at saddle
points --- so we make one final change: we erase all color inside a small
disk $D$ around each saddle point.  The boundary of $D$ is now subdivided
into four arcs, alternately colored $\rB,\rB'$, separated by defect
points. (See Figure~\ref{sinsin} in \S\ref{sec:8} below.) Our quadruple
$\cQ:=(\cF,\rB,\rB',\rD)$ assigns to each $D$ a vector space $H_1=\cQ(D)$,
which can be identified\footnote{The identification is canonical up to the
antipodal involution.} with the space of functions on $G$
(\S\ref{computeH1}, Example~\ref{thm:44}), home of the Ising action
$\theta$. 

\begin{remark}\label{frobhopf}
 The space $H_1$ is a \emph{Frobenius Hopf algebra} \cite{Sw, LR,
EGNO}. Figures~\ref{mult}, \ref{quadratics} in \S\ref{sec:8} give a pictorial
construction of the operations.  In certain settings, this reconstructs most
information in $\cQ$ (cf.~\S\ref{hopf}).
 \end{remark}  
 
The relative theory formalism reads our final colored surface as a linear map
from $H_1^{\otimes E}$ (one tensor factor for each edge) to the vector space
$\cF(Y)$. Applying this to the vector $\Theta=\bigotimes_E \theta\in
H_1^{\otimes E}$ gives a vector $\mathbf{Z}(\cF,\theta,
\Lambda)\in\cF(Y)$. For $\cF=\cG_G$, it is the Ising partition function
obtained earlier from the lattice definition.

General correlators are incorporated using line and point defects. Line
defects in $3$D are classified by objects in the category associated to the
circle. Any boundary theory $\rB$ supplies an object $W_\rB\in\cF(S^1)$,
produced by the cylinder colored by $\rB$ at one end (\S\ref{sec:9},
Figure~\ref{fig:13}). This linearly generates a subcategory of line
defects. When $\cF$ stems from a tensor category and $\rB$ comes from $\cF$
as a module over itself, the resulting defects are the \emph{Wilson
defects}. When $\rB'$ is a fiber functor for $\cF$, we call them
\emph{'t~Hooft defects}. When Wilson and 't~Hooft defect lines terminate at
points of matching color on the surface $Y$, they may be promoted to
order/disorder operators. (We postpone the abstract construction to
Definition~\ref{abstractorderdef}.) Therewith, the quadruple $\cQ$ defines a
vector $\mathbf{Z}(\cF,\theta,\Lambda,W,t)$ in the defective $\cF$-space for
$Y$, capturing the full lattice theory in TQFT language.

\csubsection{Electromagnetic duality for Hopf algebras.}\par\label{hopf} The
simplest generalization of gauge theory assumes that $\cF$ is generated by a
tensor category $\mathscr{T}$ which also is $2$-dualizable as a module over
itself, thus defining the regular (Dirichlet) boundary condition $\rB$. If
the category $\mathscr{T}$ is abelian, this forces it to be multi-fusion
\cite{FT2}. A complementary boundary condition $\rB'$ must then be a
tensor functor to $\mathscr{V}ect$; the generating condition
(Remark~\ref{gen}) confirms it to be a \emph{fiber functor} (and also forces
$\mathscr{T}$ to be a \emph{fusion} category). The reconstruction theorem of
\cite{Ha, Os, EGNO} assures us that $\sA$ is the tensor category of finite
modules and co-modules, respectively, of a finite-dimensional semi-simple
Hopf algebra. This is nothing but our friend $H_1$ (see ~\S\ref{sec:8}).

The dual electro-magnetic side $\cF'$ is based on the centralizer category
$\mathrm{End}_{\mathscr{T}} (\mathscr{V}ect)$. The same reconstruction
theorem identifies the latter with the tensor category of $H_1$-comodules.
Duality interchanges the categories of modules and co-modules of $H_1$ and
$H_1^\vee$, which label their Wilson and 't~Hooft defects,
respectively. Interchange of the order/disorder operators relies on their
categorical definitions in \S\ref{sec:8}, and it is now clear for formal
reasons that the duality $\cF\leftrightarrow\cF'$ interchanges the lattice
Ising models of the two theories.
 
If $H_1$ is neither commutative nor co-commutative, then neither theory
$\cF,\cF'$ has a classical field theory interpretation. This gives a
\emph{quantum version} of the Ising model in $2$ dimensions, with its gauge
coupling.

\csubsection{More general bi-colored theories.}\par\label{abstractkoszul} When
the polarization $(\rB,\rB')$ is defined by modules
$\mathscr{M},\mathscr{M}'$, the complementarity
$\mathrm{Hom}_\mathscr{T}(\mathscr{M},\mathscr{M}') = \mathbf{1}$ and the
generating condition allow us to Morita convert the quadruple
$\cQ=(\cF_\mathscr{T},\rB,\rB',\rD)$ into the two triples $(\cE,
\mathbf{1},\cE)$ and $(\cE', \cE', \mathbf{1})$ built from the centralizer
categories $\mathscr{E},\mathscr{E}'$ of $\mathscr{T}$ in
$\mathscr{M},\mathscr{M}'$; see Remark~\ref{remark8.6}.  The fourth members of each quartet, the relevant
defects, come from the obvious identifications
$\mathrm{Hom}_{\cE}(\cE,\mathbf{1}) =\mathbf{1}$ (and primed).  We recognize
the earlier electromagnetic duality as the equivalence $\cE\simeq\cE'$, which
interchanges the fiber functor $\mathbf{1}$ with the regular boundary
conditions. Specifically, as in the Hopf situation, \[ \mathscr{E}'
=\mathrm{End}_{\mathscr{E}}(\mathbf{1}), \qquad \mathscr{E}
=\mathrm{End}_{\mathscr{E}} (\mathbf{1}).  \]

\csubsection{Higher-dimensional duality.}\par \label{spectra} Electromagnetic
duality for finite abelian groups generalizes to \emph{higher gauge theories}
in higher dimension. Just as gauge theory with finite group $A$ theory counts
flat bundles, which are maps, up to homotopy, to the classifying space $BA$,
higher gauge theories count classes of maps into the higher Eilenberg-MacLane
spaces $B^rA = K(A;r)$. These are more familiar as the $r$th cohomology
classes with values in $A$. In space-time dimension $(d+1)$, $d+1\ge 2$, the
duality identifies the theories with targets $B^rA$ and $B^{d-r}A^\vee$, for
any $r\le d$. These spaces are in a generalized Pontrjagin duality, induced
from the pairing $A\times A^\vee \to \bC^\times$. A categorified Fourier
transform gives an equivalence between the respective field theories
\cite{FHLT}.  We spell this out in~\S\ref{sec:7}.

When $0< r< d$, we can replicate our constructions above to place lattice
boundary theories for these in a higher Kramers-Wannier duality: this will
relate theories valued in $B^{r-1}A$ and $B^{d-r-1}A$, in dimension
$d$. Ising has $d=2, r=1$. These theories involve ordinary $A$-valued
homology and cohomology groups, and there is a natural lattice formulation of
this higher duality: see for instance \cite[\S6.1.4]{ID}.  However, one
illustration of our abstract formulation in \S\ref{polarized} gives a
\emph{homotopical} version of these dualities, in which the target fields are
valued into any spectrum $\boldsymbol{T}$ with finite homotopy groups, rather
than in an Eilenberg-MacLane $K(A;r)$. Such a spectrum defines a generalized
homology theory $X\mapsto H_\bullet(X;\boldsymbol{T})$, and has a
\emph{Pontrjagin dual spectrum}, which represents the generalized cohomology
theory given by the (shifted) Pontrjagin dual groups $X\mapsto H^\bullet
(X;\boldsymbol{T}^\vee) := H_{d-\bullet}(X;\boldsymbol{T})^\vee$.
Generalized (co)homologies do not admit a chain/cochain formulation, and are
difficult to express explicitly in lattice format; the conversion in
\S\ref{polarized} to a handle decomposition of the manifold offer a
substitute TQFT method for their construction.

\begin{remark}  
 It should be clear now how to extend the higher-dimensional construction in
\S\ref{polarized} to higher dimension, but one detail stands out. The
quartered disk $D$ is really a $1$-handle, with the attaching faces colored
by $\rB$ and complementary faces $\rB'$. There is also a $0$-handle $D_0$ and
a $2$-handle $D_2$, with monochrome boundaries $\rB, \rB'$, respectively. The
gauge theory spaces $\cQ(D_{0,2})$ are $1$-dimensional, and inserting an
action there would only change Ising theory by an overall scale
factor. However, in higher dimension one must consider all handles.
 \end{remark}

   \section{Review of field theory concepts}\label{sec:2}

Relativistic field theories are formulated on Minkowski spacetime.  A key
property---positivity of energy---enables Wick rotation to Euclidean space.
An energy-momentum tensor gives deformations away from Euclidean space; a
strong from is an extension of the theory to arbitrary Riemannian manifolds.
The resulting structure has been axiomatized, originally by Segal~\cite{Se}
in the case of 2-dimensional conformal field theories, and these axioms have
been elaborated and extended in many directions, particularly for
\emph{topological} field theories.  In its most basic form an
$n$-dimensional\footnote{$n$~is the dimension of spacetime} topological field
theory is a map
  \begin{equation}\label{eq:1}
     \rF\:\tbord\longrightarrow \Vect_{\CC} 
  \end{equation}
with (i)~domain the bordism category whose objects are closed
$(n-1)$-manifolds and morphisms are (compact) bordisms between them,
(ii)~codomain the linear category $\Vect=\Vect_{\CC}$ of finite dimensional
complex vector spaces and linear maps, and (iii)~$\rF$~a symmetric monoidal
functor which maps disjoint unions to tensor products.  See~\cite{At} for an
early exposition.  The map~$\rF$ encodes the state spaces, point operators,
correlation functions, and partition functions of a field theory.  However,
it does not capture extended operators---line operators, surface operators,
etc.---nor does it capture the full locality of field theory.  An
\emph{extended field theory} is a map
  \begin{equation}\label{eq:2}
     \rF\:\bord n\longrightarrow \sC 
  \end{equation}
from an $n$-category\footnote{or $(\infty ,n)$-category} of bordisms to some
target $n$-category.  The \emph{cobordism hypothesis}~\cite{BD,L} tells that
an extended topological field theory is determined by its value on a point,
which is an $n$-dualizable object ~$C\in \sC$.  For theories of unoriented
manifolds the object~$C$ has $O_n$-invariance data for the canonical action
of~$O_n$ on the $\infty $-groupoid of $n$-dualizable objects.  We refer the
reader to \cite{L,T1,F1,F2} for motivation, elaboration, and examples.

  \begin{example}[$\sC=\TensCat$]\label{thm:54}
 In this paper we mostly consider $n=3$~dimensional extended topological
field theories with codomain the 3-category $\tcat$ of complex linear tensor
categories (enriched over~$\cVect$).  The paper~\cite{DSPS} develops the
theory of $\tcat$ (over arbitrary ground fields); see~\cite{EGNO} for
background on tensor categories.  An object of~$\tcat$ is a tensor
category~$\sA$, a 1-morphism $\sA\to\sA'$ is an $(\sA',\sA)$-bimodule
category, a 2-morphism is a linear functor commuting with the bimodule
actions, and a 3-morphism is a natural transformation of functors.  A tensor
category~$\sA$ is \emph{fusion} if it is semisimple, satisfies strong
finiteness properties, and the vector space~$\EA 1\cong \CC\cdot \id_1$,
where $1\in \sA$ is the unit object.  A fusion category is 3-dualizable.  A
(pivotal structure) on~$\sA$ is a tensor isomorphism~$\rho $ from the
identity functor~$\id_{\sA}$ to the double dual functor.  The
\emph{dimension} of an object~$x\in \sA$ is then the composition 
  \begin{equation}\label{eq:110}
     \dim(x)\:1\xrightarrow{\;\;\coev_x\;\;} x\otimes
     x\dual\xrightarrow{\;\;\rho _x\otimes \id\;\;} x^{\vee\vee}\otimes x\dual
     \xrightarrow{\;\;\ev_{x\dual}\;\;} 1 
  \end{equation}
in~$\EA 1$ of coevaluation, pivotal structure, and evaluation.  A pivotal
structure provides $SO_2$-invariance data on~$\sA$.  A pivotal structure is
\emph{spherical}~\cite{BW1} if $\dim(x)=\dim(x\dual)$ for all~$s\in \sA$, in
which case $\sA$~is $SO_3$-invariant.  
  \end{example}

We introduce \emph{boundary theories} \cite[Example~4.3.22]{L} and
\emph{domain walls} \cite[Example~4.3.23]{L}.

  \begin{definition}[]\label{thm:1}
 Let $\sC$~be an $n$-category, $C\in \sC$ an $n$-dualizable object equipped
with\footnote{We can and will use other tangential structures, such as
orientation.} $O_n$-invariance data, and $\rF_C\:\bord n\to\sC$ the
corresponding topological field theory.

 \begin{enumerate}[label=\textnormal{(\roman*)}]

 \item \emph{Topological boundary data} for~$\rF_C$ is an $(n-1)$-dualizable
morphism $B \:1\to C$ in~$\sC$ equipped with $O_{n-1}$-invariance data.

 \item Suppose $B,B'\:1\to C$ are topological boundary data.  Then
\emph{domain wall data} from~$B$ to~$B'$ is an $(n-2)$-dualizable
2-morphism $D\:B\to B'$ in~$C$ equipped with $O_{n-2}$-invariance data.
  \end{enumerate}
  \end{definition}

\noindent
 An extension of the cobordism hypothesis~\cite[\S4.3]{L} gives from~(i) the
associated \emph{topological boundary theory}, which is a natural
transformation of functors $\bord{n-1}\to\sC$, denoted
  \begin{equation}\label{eq:3}
     \rB _B \:1\longrightarrow \tau \mstrut _{\le n-1}\rF_C, 
  \end{equation}
where the truncation~ $\tau \mstrut _{\le n-1}\rF_C$ is the composition
$\bord{n-1}\longrightarrow \bord n\xrightarrow{\;\rF_C\;}\sC$, and $1$~is the
trivial theory.  Then $B $~is the value of~$\rB _B $ on a point.  For
example, if $X$~is a compact manifold with boundary, and $X$~as a bordism has
$\partial X$~incoming, then the pair~$(\rF_C,\rB _B )$ evaluates on~$X$ to
  \begin{equation}\label{eq:4}
     \CC\xrightarrow{\;\;\rB _B
     (\bX)\;\;}\rF_C(\bX)\xrightarrow{\;\;\rF_C(X)\;\;} \CC. 
  \end{equation}
This is the usual picture in physics of a boundary theory.  For more about
the categorical aspects of this definition, see~\cite{JFS}.  From~(ii) we
obtain a \emph{domain wall theory} from~$B$ to~$B'$: 
  \begin{equation}\label{eq:111}
     \rD_D\:\rB\longrightarrow \rB'. 
  \end{equation}
Such a theory can be evaluated on manifolds with codimension two corners, as
we discuss in~\S\ref{sec:9}.

  \begin{remark}[]\label{thm:2}
 Let~$n=3$, $\sC=\tcat$, and suppose $\sA$~is a 3-dualizable tensor category,
for example a fusion category.  Then a map $1\to\sA$ in~$\tcat$ is a linear
category~$\sL$ equipped with a left $\sA$-module structure.  There are
dualizability constraints on the left $\sA$-module.  There is a canonical
example, namely~$\sL=\sA$: the tensor structure on~$\sA$ induces a module
structure on~$\sL$.  We explore this ``regular boundary theory''
in~\S\ref{sec:9}.

  \end{remark}

The map~$\rB _B $ defines a \emph{relative} $(n-1)$-dimensional
theory~\cite{FT1}.  For example, its value on a closed $(n-1)$-manifold~$Y$ is
a linear map
  \begin{equation}\label{eq:5}
     \rB _B (Y)\:\CC\longrightarrow \rF_C(Y), 
  \end{equation}
or simply an element of the vector space~$\rF_C(Y)$.  (Here we assume the
looping~$\Omega ^{n-1}\sC$ is equivalent to~$\Vect_{\CC}$.)  The relative
theory on~$Y$ is the value of the pair~$(\rF_C,\rB _B )$ on $[0,1]\times Y$,
where we take $\{0\}\times Y$ as incoming and equipped with the topological
boundary theory~$\rB _B $; by contrast, $\{1\}\times Y$ is outgoing and is
free---no boundary theory.  This relation between the relative theory and
boundary theory works for any morphism~$M$ in~$\bord{n-1}$ in place of~$Y$.
For the relative theory we only use the truncation~$\tau \mstrut _{\le
n-1}\rF_C$, but to evaluate the pair~$(\rF_C,\rB _B )$ on arbitrary
$n$-dimensional bordisms, as in~\eqref{eq:4}, we use the full theory~$\rF_C$.

   \section{Three-dimensional finite gauge theories and electromagnetic duality}\label{sec:3}

  \subsection{Finite gauge theory and topological boundary conditions}\label{subsec:3.1}

Let $G$~be a finite group.  We construct finite gauge theory with gauge
group~$G$ as an extended field theory 
  \begin{equation}\label{eq:6}
     \FG\:\bord3\longrightarrow \tcat. 
  \end{equation}
It is a theory of unoriented manifolds.  It can be defined using the
cobordism hypothesis by declaring
  \begin{equation}\label{eq:7}
     \FG(\pt) = \cVect[G], 
  \end{equation}
where $\cVect[G]$~is the tensor category of finite rank complex vector
bundles over~$G$ with convolution product: if $W,W'\to G$ are vector bundles,
then
  \begin{equation}\label{eq:8}
     (W*W')_h=\bigoplus\limits_{gg'=h}W\mstrut _g\otimes W'_{g'},\qquad h\in
     G; 
  \end{equation}
the convolution product of morphisms is defined similarly.  One may
regard~$\cVect[G]$ as the ``group ring'' of~$G$ with coefficients
in~$\cVect$.  Write $O_3\cong \pmo\times SO_3$.  Then $\pmo$-equivariance
data on $\cVect[G]$ is the equivalence $\cVect[G]\xrightarrow{\;\cong \;}
\cVect[G]^{\textnormal{op}}$ obtained by pullback along inversion $x\mapsto
x\inv $ on~$G$.  The dual of $W\to G$ in~$\cVect[G]$ can be identified with
the bundle whose fiber at~$x\in G$ is~$W^*_{x\inv }$, so there is a natural
map from~$W$ to its double dual.  This defines a pivotal structure.  The
identity object $\bone\to G$ in~$\cVect[G]$ has 
  \begin{equation}\label{eq:112}
     \bone_g=\begin{cases} \CC,&g=e;\\0,&g\neq e.\end{cases} 
  \end{equation}
The dimension of~$W\to G$ is $\sum_{g\in G}\dim(W_g)$.  The pivotal structure
is spherical.

The theory~$\FG$ can also be constructed from a classical model using a
finite path integral~\cite{F3,FHLT}.  The partition function of a closed
3-manifold~$X$ counts the isomorphism classes of principal $G$-bundles $P\to
X$.  For a manifold~$M$ define~$\bung M$ as the groupoid whose objects are
principal $G$-bundles $P\to M$ and morphisms are isomorphisms
covering~$\id_M$.  Then
  \begin{equation}\label{eq:9}
     \FG(X)= \sum\limits_{[P]\in \pi _0\bung X}\frac{1}{\#\Aut P}. 
  \end{equation}
If $Y$~is a closed 2-manifold, then 
  \begin{equation}\label{eq:10}
     \FG(Y)=\Fun\bigl(\bung Y \bigr) 
  \end{equation}
is the vector space of complex functions on (isomorphism classes of)
principal $G$-bundles over~$Y$.  If $S$~is a closed 1-manifold, then 
  \begin{equation}\label{eq:45}
     \FG(S) = \Vect\bigl(\bung S \bigr) 
  \end{equation}
is the linear category of complex vector bundles over the groupoid~$\bung S$.
It is more subtle to sum over the groupoid~$\bung \pt$ to obtain 
  \begin{equation}\label{eq:46}
     \FG(\pt)=\cVect[G],
  \end{equation}
the starting point of the construction with the cobordism hypothesis.
 
There is a topological boundary theory
  \begin{equation}\label{eq:11}
     \BH\:1\longrightarrow \FG 
  \end{equation}
attached to a subgroup~$H\subset G$, and it has a classical description.
Namely, for a manifold~$M$ let $\bunh M$ be the groupoid whose objects are a
principal $G$-bundle $P\to M$ together with a section~$\sigma $ of the fiber
bundle $P/H\to M$ with fiber~$G/H$;, equivalently, $\bunh M$~is the groupoid
of principal $H$-bundles over~$M$.  In physical terms $\BH$~ is a gauged
$\sigma $-model on the homogeneous space~$G/H$, but we do not sum over the
$G$-bundle.  That is, there is a forgetful\footnote{Forget the
section~$\sigma $.  Alternatively, \eqref{eq:12}~maps a principal $H$-bundle
to its associated principal $G$-bundle.} map
  \begin{equation}\label{eq:12}
     \bunh M\longrightarrow \bung M 
  \end{equation}
and the topological boundary theory~$\BH$ sums over the fibers
of~\eqref{eq:12}.  (The fibers are the relative fields of `relative field
theory': we work \emph{relative} to the base.)  A central extension of~$H$
by~$\TT$ determines a twisted version of~$\BH$, which is also a boundary
theory.

There are two extreme cases.  If $H=\{e\}$ then we sum over trivializations;
if $H=G$ then \eqref{eq:12}~is an isomorphism and we have the \emph{free}
boundary condition.  We call these \emph{Dirichlet} and \emph{Neumann}
boundary theories, respectively.  For~$M=\pt$ the sum over the fibers
of~\eqref{eq:12} produces the $\cVect[G]$-module $\cVect(G/H)$ of finite rank
complex vector bundles over~$G/H$.  If $W\to G$ and $V\to G/H$ are vector
bundles, then the module product is
  \begin{equation}\label{eq:13}
     (W*V)_{yH} = \bigoplus\limits_{x(x'H)=yH}W\mstrut _x\otimes V_{x'H}. 
  \end{equation}
The regular boundary theory described in Remark~\ref{thm:2} and explored
in~\S\ref{sec:9} corresponds to~$H=\{e\}$.

  \subsection{The Koszul dual to finite gauge theory}\label{subsec:3.5}

There is another extended 3-dimensional topological field theory 
  \begin{equation}\label{eq:14}
     \hFG\:\bord3\longrightarrow \tcat 
  \end{equation}
defined using the cobordism hypothesis by declaring that the value 
  \begin{equation}\label{eq:15}
     \hFG(\pt) = \cRep(G) 
  \end{equation}
on a point is the tensor category of finite dimensional complex
representations of~$G$.  The $\pmo$-equivariance data maps a
representation~$\hW$ to its dual~$\hW^*$.  The $SO_3$-equivariance data is
the spherical structure defined by the usual map of a representation into its
double dual.  If $G$~is abelian the theory~$\hFG$ is the quantization of
gauge theory for the Pontrjagin dual to~$G$, see~\S\ref{subsec:3.2}; we do
not know a classical theory whose quantization is~$\hFG$ if $G$~is
nonabelian.

  \begin{proposition}[]\label{thm:4}
 There is a Morita equivalence $\cVect[G]\simeq  \cRep(G)$, i.e., an
equivalence of module 2-categories
  \begin{equation}\label{eq:16}
     \sF\:\textnormal{$\Vect[G]$-mod}\xrightarrow{\;\;\cong \;\;}
     \textnormal{$\Rep(G)$-mod}.  
  \end{equation}
For a subgroup $H\subset G$ the image of $\Vect(G/H)$ under~$\sF$ is the
category~$\Rep(H)$ of finite dimensional complex representations of~$H$.
  \end{proposition}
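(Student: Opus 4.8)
The plan is to realize the Morita equivalence $\cVect[G]\simeq\cRep(G)$ concretely via a bimodule category, and then compute the image of $\cVect(G/H)$ under the induced functor on module categories. The natural candidate for the $(\cRep(G),\cVect[G])$-bimodule category implementing $\sF$ is $\cVect$ itself, with the forgetful (fiber) functor making $\cVect$ a left $\cRep(G)$-module, and with the right $\cVect[G]$-action coming from the observation that a vector bundle $W\to G$ acts on a vector space $U$ by $U\mapsto \bigoplus_{g\in G} W_g\otimes U$, i.e.\ by ``total sections'' — this is precisely the fiber functor / global-sections module structure already flagged in~\S\ref{subsec:3.1} and Remark~\ref{gen}. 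First I would verify that this $\cVect$ is invertible as a bimodule, equivalently that it induces the claimed Morita equivalence; the cleanest route is the reconstruction-theorem package (\cite{Ha,Os,EGNO}) identifying $\End_{\cVect[G]}(\cVect)$ with $\cRep(G)$, so that $\cVect$ is a Morita bimodule between $\cVect[G]$ and its centralizer $\cRep(G)$. This is essentially the $\mathscr{T}=\cVect[G]$, $\phi=\textnormal{global sections}$ instance of the general mechanism the paper invokes repeatedly, so I would phrase it that way rather than reprove it.

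Granting the equivalence $\sF$, the functor on module 2-categories sends a left $\cVect[G]$-module category $\sL$ to $\cVect\boxtimes_{\cVect[G]}\sL$, viewed as a left $\cRep(G)$-module. So the content of the second sentence is the computation
\begin{equation}\label{eq:tensorcompute}
  \cVect\boxtimes_{\cVect[G]}\cVect(G/H)\;\simeq\;\cRep(H)
\end{equation}
as left $\cRep(G)$-module categories. I would compute the left-hand side by unwinding the relative tensor product: objects are vector bundles over $G/H$ tensored up over the $\cVect[G]$-action, and the balancing identifies, for $x\in G$, the bundle $V\to G/H$ with its $x$-translate with multiplicity $W_x$. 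Concretely, $\cVect(G/H)$ is the category of $G$-equivariant sheaves on $G/H$ after we remember the $\cVect[G]=\cVect_G(G)$-action is by convolution, so $\cVect\boxtimes_{\cVect[G]}\cVect(G/H)$ is the ``sections at the basepoint'' of this, which by the standard identification $\cVect_G(G/H)\simeq\cRep(H)$ (induction–restriction / Frobenius reciprocity at the level of categories, i.e.\ the stabilizer of the identity coset is $H$) is exactly $\cRep(H)$. Keeping track of the residual left $\cRep(G)$-action shows it is the restriction action $\cRep(G)\to\cRep(H)$ followed by tensoring, which is the correct module structure on $\cRep(H)$ as an object of $\cRep(G)$-mod.

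A useful sanity check I would include: the two extreme cases. For $H=G$, $\cVect(G/G)=\cVect$ is the Neumann (fiber-functor) boundary condition, and \eqref{eq:tensorcompute} reads $\cVect\boxtimes_{\cVect[G]}\cVect\simeq\cRep(G)$ — which is precisely the regular $\cRep(G)$-module, matching the statement that the Neumann condition for $\cG_G$ corresponds to the regular (Dirichlet) condition for $\cR_G$ under duality. For $H=\{e\}$, $\cVect(G)=\cVect[G]$ is the Dirichlet boundary condition, and \eqref{eq:tensorcompute} gives $\cVect\boxtimes_{\cVect[G]}\cVect[G]\simeq\cVect\simeq\cRep(\{e\})$ — the fiber functor, again consistent with the interchange of Dirichlet and Neumann. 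The main obstacle I anticipate is bookkeeping rather than conceptual: making the relative tensor product $\boxtimes_{\cVect[G]}$ precise at the $2$-categorical level (it is a bicategorical coend, and one must check the relevant finiteness/semisimplicity so it is computed by the naive formula) and then tracking the left $\cRep(G)$-action through the equivalence $\cVect_G(G/H)\simeq\cRep(H)$ so that the module structure, not merely the underlying category, comes out right. Everything else is a routine unwinding of convolution.
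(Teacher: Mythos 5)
Your approach is essentially the paper's, in slightly different clothing, so the comparison is worth making explicit. The paper implements the Morita equivalence by the invertible $(\Rep(G),\Vect[G])$-bimodule $\Vect_{G_L}(G)$ of bundles on $G$ equivariant for left multiplication; since that action is free and transitive, taking the fiber at $e$ identifies this bimodule with your $\Vect$, the residual right multiplication becoming exactly your ``total sections'' right $\Vect[G]$-action and the left $\Rep(G)$-action becoming tensoring with the underlying vector space. Beyond that there are two real differences. First, you compute the transform in the direction $\Vect(G/H)\mapsto\Rep(H)$, whereas the paper computes the inverse direction, realizing the transform of $\Rep(H)$ as $\Vect_{G_R}(G)\otimes_{\Rep(G)}\Rep(H)$; given invertibility these are equivalent. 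Second, and more substantively, the paper disposes of the relative tensor product in one stroke: every category in sight is $\Vect$ of a finite global quotient stack, and by \cite[Theorem 1.2]{BFN} the relative tensor product is $\Vect$ of the fiber product of $G\gpd G_R\to\pt\gpd G\leftarrow\pt\gpd H$, which is the set $G/H$. Your hand computation by descent reaches the right answer but contains two slips you should repair before relying on it: $\Vect[G]$ is \emph{not} $\Vect_G(G)$ (the latter is the conjugation-equivariant category, i.e.\ the Drinfeld center $\FG(\cir)$, not $\FG(\pt)$), and $\Vect(G/H)$ is not itself the category of $G$-equivariant sheaves on $G/H$. The correct statement is that the convolution $\Vect[G]$-module structure on $\Vect(G/H)$ is a descent datum along $\pt\to\pt\gpd G$ whose associated stack is $(G/H)\gpd G\simeq\pt\gpd H$, and tensoring with $\Vect$ over $\Vect[G]$ performs that descent, landing in $\Vect(\pt\gpd H)=\Rep(H)$ with the restriction action of $\Rep(G)$ — this is exactly the categorified Morita picture of Remark~\ref{thm:24}. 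With that repair (or by citing the fiber-product theorem, as the paper does) your argument is complete, and your sanity checks at $H=G$ and $H=\{e\}$ correctly reproduce the exchange of Dirichlet and Neumann boundary theories.
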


\noindent
 Let $W$~be a representation of~$G$ and $V$~a representation of ~$H$.  Then
the module product of~$W$ and~$V$ is the $H$-representation $i^*W\otimes V$,
where $i^*W$~is the restriction of~$W$ to a representation of~$H\subset G$.

  \begin{remark}[]\label{thm:23}
 Let $j\:\Bord_3(SO_3)\to\Bord_3$ be the forgetful map from the oriented
bordism category to the unoriented bordism category.  The Morita equivalence
is $SO_3$-invariant, so by the cobordism hypothesis defines an equivalence
$\FG\circ j\xrightarrow{\;\simeq \;}\hFG\circ j$ of \emph{oriented}
3-dimensional field theories.  The equivalence of unoriented theories
requires a twist by the orientation sheaf on one side or the other; see
Remark~\ref{thm:29} for the abelian case.
  \end{remark}

  \begin{remark}[]\label{thm:47}
 See~ \cite[Example 7.12.19]{EGNO} for another account (with a different
definition of Morita equivalence).
  \end{remark}

  \begin{proof}
 The Morita equivalence is implemented by the invertible
$\bigl(\Rep(G),\Vect[G] \bigr)$-bimodule category $\Vect_{G_L}(G)$ of finite
rank complex vector bundles over~$G$ equivariant for the left multiplication
action of~$G$ on~$G$; the inverse $\bigl(\Vect[G],\Rep(G) \bigr)$-bimodule
category $\Vect_{G_R}(G)$ uses right multiplication.  The transform
of~$\Rep(H)$ is the $\Vect[G]$-module
  \begin{equation}\label{eq:17}
     \Vect_{G_R}(G)\otimes \mstrut _{\Rep(G)}\Rep(H). 
  \end{equation}
Each category in~\eqref{eq:17} is the category of finite rank complex vector
bundles on a finite global quotient stack~$X\gpd K$ of a finite group~$K$
acting on a finite set~$X$, as summarized in the diagram 
  \begin{equation}\label{eq:18}
     \begin{gathered} \xymatrix{G\gpd G_R\ar[dr]&& \pt\gpd H\ar[dl]\\
     &\pt\gpd G} 
     \end{gathered} 
  \end{equation}
The monoidal structure on $\Rep(G)=\Vect(\pt\gpd G)$ is tensor product.  In
this situation \eqref{eq:17}~ is the category of finite rank complex vector
bundles on the fiber product of~\eqref{eq:18}, which is the set~$G/H$; see
\cite[Theorem 1.2]{BFN} for a much more general statement.  
  \end{proof}

  \begin{remark}[]\label{thm:24}
 The Morita equivalence~\eqref{eq:16} also fits into a general picture.
Suppose $\pi \:X\to Y$ is an essentially surjective\footnote{Every object~$y$
of~$Y$ is equivalent to the image of an object of~$X$ under~$\pi $} map of
finite groupoids.  The fiber product $X\times _YX$ is the set of arrows in a
groupoid~$\sG$ with set of objects~$X$, and $\sG$~is equivalent to~$Y$.
Pullback $\pi ^*\:\Vect(Y)\to\Vect(\sG)$ is an equivalence of categories; the
inverse is descent using the equivariance data.  (See~\cite[\S A.3]{FHT}, for
example.)  In other words, there is a Morita equivalence of the commutative
algebra~$\Fun(Y)$ of functions on~$Y$ under pointwise multiplication with the
(convolution) groupoid algebra~$\CC[X\times _YX]$ of~$\sG$.  For example,
$Y=\pt$ and $X$~a finite set reduces to the Morita triviality of a matrix
algebra.  Functions on~$X$ form an invertible bimodule which exhibits the
Morita equivalence.  The Morita equivalence~\eqref{eq:16} is the once
categorified version, applied to the surjective map $\pi \:\pt\to\pt\gpd G$
of finite \emph{stacks}.  See~\cite{BG} for non-discrete generalizations.
  \end{remark}

  \subsection{Abelian duality as Pontrjagin duality}\label{subsec:3.2}

To begin we recall that associated to a finite abelian group~$\pA$ is its
\emph{Pontrjagin dual} group\footnote{$\TT$~is the group of unit norm complex
numbers.}  $\pA\dual=\Hom(\pA,\TT)$ of characters.  The double dual of~$M$ is
canonically isomorphic to~$M$.  Furthermore, the \emph{Fourier transform}
  \begin{equation}\label{eq:19}
     \sF\:\Fun(\pA)\longrightarrow \Fun(\pA\dual) 
  \end{equation}
is an isomorphism of the vector spaces of functions on~$\pA$ and~$\pA\dual$.
It is defined by convolution with the complex conjugate of the universal character
  \begin{equation}\label{eq:20}
     \chi \:\pA\times \pA\dual\longrightarrow \CC, 
  \end{equation}
up to a numerical factor:
  \begin{equation}\label{eq:21}
  \begin{aligned}
   \sF\:\Fun(\pA)&\longrightarrow \Fun(\pA\dual)\\
     f&\longmapsto \Bigl(a\dual\mapsto
     \frac{1}{\sqrt{\#\pA}}\,\sum\limits_{a\in \pA}\overline{\chi 
     (a,a\dual)}\,f(a)\Bigr).
  \end{aligned}
  \end{equation}
In terms of the correspondence diagram 
  \begin{equation}\label{eq:54}
     \begin{gathered}
     \xymatrix{&(\pA\times \pA\dual,\bar\chi )\ar[dl]_p\ar[dr]^q\\\pA&&\pA\dual}
  \end{gathered}
  \end{equation}
the map~\eqref{eq:21} is, up to a factor, the composition $q_*\circ \bar\chi
\circ p^*$ acting on functions; the inverse uses ~$\chi $ as integral kernel.

Returning to finite gauge theory, if the gauge group $G=A$~is finite abelian,
then there is a natural equivalence $\Rep(A)\simeq \Vect[A\dual]$; tensor
product maps to convolution.  If $B\subset A$~is a subgroup, then
$\Rep(B)\simeq \Vect[B\dual]$ and we identify $B\dual\cong A\dual/B^\perp$,
where $B^\perp\subset A\dual$ is the annihilator of~$B$.  Therefore, for
abelian groups the Morita equivalence~\eqref{eq:16} reduces to the duality
map $A\longleftrightarrow A\dual$ on abelian groups and the annihilator map
$B\longleftrightarrow B^\perp$ on subgroups.
 
The Morita equivalence Proposition~\ref{thm:4} in the abelian case is an
instance of electromagnetic duality.  One expression of the latter is a
field-theoretic Fourier transform \cite[Lecture 8]{W1}, which we adapt to
3-dimensional finite gauge theories via a correspondence diagram
  \begin{equation}\label{eq:22}
     \begin{gathered}
     \xymatrix{&\tG_{A,A\dual\!,\,BA}\ar[dl]_p\ar[dr]^q\\\rG_A&&\rG_{A\dual}}
     \end{gathered} 
  \end{equation}
of equivalences of extended 3-dimensional topological field theories.  Each
is a finite path integral.  As in~\eqref{eq:9} the theories~$\rG_A,\rG_{A\dual}$
count principal bundles.  The theory~$\tG_{A,A\dual\!,\,BA}$ has three
classical fields on a manifold~$M$: a principal $A\dual$-bundle $P\dual\to
M$, an $A$-gerbe $\sG\to M$, and a trivialization $P\to M$ of $\sG\to M$.
Whereas $\rG_A,\rG_{A\dual}$~are theories of unoriented manifolds,
$\tG_{A,A\dual\!,\,BA}$~is a theory of oriented manifolds.  The exponentiated
action on a closed oriented 3-manifold~$X$ is ostensibly
  \begin{equation}\label{eq:23}
     e^{iS_X}(P,P\dual,\sG) = \langle [P\dual]\cup[\sG]\,,\,[X] \rangle, 
  \end{equation}
where $[P\dual]\in H^1(X;A\dual)$, $[\sG]\in H^2(X;A)$, and $[X]\in H_3(X)$ is the
fundamental class.  But, in fact, the action is trivial since the existence
of~$P$ forces $[\sG]=0$.  The equivalence~$p$ is obtained by summing over~$P\dual$
and then over~$\sG$; these sums give canceling factors\footnote{The first
ratio is the number of isomorphism classes of $A\dual$-bundles divided by the
number of automorphisms of each.  The second is the reciprocal of the number
of automorphisms of an $A$-gerbe, accounting for automorphisms of
automorphisms.}  
  \begin{equation}\label{eq:24}
     \frac{\#H^1(X;A\dual)}{\#H^0(X;A\dual)}\;\frac{\#H^0(X;A)}{\#H^1(X;A)}
     =1. 
  \end{equation}
We can take~$\sG$ trivial and so are reduced to~$\rG_A$.  The equivalence~$q$
is obtained by summing over~$P$ and then over~$\sG$; the sums contribute 
  \begin{equation}\label{eq:25}
      \frac{\#H^1(X;A)}{\#H^0(X;A)}\;
      \frac{\#H^0(X;A)}{\#H^1(X;A)} = 1. 
  \end{equation}

  \begin{remark}[]\label{thm:5}
 If we perform a similar duality in even dimensions, then the factors do not
cancel and we pick up~$(\#A)^{\chi(X)}$, where $\chi(X)$~is the Euler number
of~$X$.  In other words, the duality involves tensoring with an invertible
Euler theory.  The Euler factor also occurs in electromagnetic duality with
continuous abelian gauge groups, for example in~\cite{W2}.
  \end{remark}

Another picture: The Morita equivalence is an invertible 2-dimensional
\emph{domain wall} between~$\rG_A$ and~$\rG_{A\dual}$.  In the abelian case it is
most easily expressed in the language of homotopy theory, as we
explain in~\S\ref{sec:7}.  Briefly, consider the correspondence diagram of
pointed spaces and cocycles
  \begin{equation}\label{eq:55}
     \begin{gathered} \xymatrix{&(BA\times BA\dual,c
     )\ar[dl]_(.55)p\ar[dr]^(.55)q\\BA&&BA\dual} \end{gathered} 
  \end{equation}
where $c\in Z^2(BA\times BA\dual;\TT)$ represents the cohomology class of the
canonical Heisenberg extension of~$A\times A\dual$.  The 3-dimensional
theory~$\rG_A$ is constructed by summing over homotopy classes of maps to~$BA$,
the 3-dimensional theory~$\rG_{A\dual}$ by summing over homotopy classes of
maps to~$BA\dual$, and the Morita isomorphism via the correspondence
diagram.  For example, if $Y$~is a closed \emph{oriented} surface,  then
  \begin{equation}\label{eq:36}
     \rG_A(Y) = \Fun\bigl(\Bun_A(Y) \bigr) \cong \Fun\bigl(H^1(Y;A) \bigr) 
  \end{equation}
and the correspondence diagram~\eqref{eq:55} induces\footnote{Take homotopy
classes of maps of~$Y$ into~\eqref{eq:55} to form the correspondence
diagram~\eqref{eq:54} with $\pA=H^1(Y;A)$.}  an isomorphism
  \begin{equation}\label{eq:37}
     \sF\:\rG_A(Y)\longrightarrow \rG_{A\dual}(Y). 
  \end{equation}
There is a Pontrjagin-Poincar\'e duality pairing between $H^1(Y;A)$ and
$H^1(Y;A\dual)$ using cup product, the Pontrjagin duality pairing
$A\dual\times A\to\TT$, and the fundamental class.  Up to a factor,
\eqref{eq:37}~is the Fourier transform~\eqref{eq:21}.

See \cite[\S6.1.4]{ID} for an alternative account of abelian duality using chain
complexes. 

  \begin{remark}[]\label{thm:29}
 If $Y$~is a closed manifold which is not necessarily oriented, then there is
a Pontrjagin-Poincar\'e duality pairing between~$H^1(Y;A)$
and~$H^1(Y;\widetilde{A\dual})$, where $\widetilde{A\dual}$~is the local
system~$\Hom(A,\widetilde{\TT})$ and $\widetilde{\TT}\to Y$ is associated to
the orientation double cover.  This is part of an equivalence of unoriented
theories, the latter a gauge theory of orientation-twisted principal
$A\dual$-bundles.
  \end{remark}

  \subsection{Loop operators}\label{subsec:3.3}

Let $G$~be a finite group and consider the finite gauge theory~$\FG$
of~\S\ref{subsec:3.1}.  Recall that there is a classical model with fields
the groupoid~$\Bun_G$ of principal $G$-bundles; $\FG$~is computed by summing
over~$\Bun_G$, as in~\eqref{eq:9}.  In general, (finite) path integrals can
be modified by \emph{operator insertions}, and in this 3-dimensional finite
gauge theory there are two distinguished classes of \emph{loop operators}
associated to 1-dimensional submanifolds of a closed 3-manifold~$X$.  The
\emph{Wilson operator} is an insertion into the sum, whereas the \emph{'t
Hooft operator} alters the groupoid of $G$-bundles.
 
The Wilson operator is defined for $S\subset X$  an oriented connected
1-dimensional submanifold and $\chi \:G\to\TT$ a character of~$G$.  Then the
function
  \begin{equation}\label{eq:29}
     h_{S,\chi }\:\bung X\longrightarrow \CC 
  \end{equation}
maps a principal $G$-bundle $P\to X$ to $\chi $~applied to the
holonomy.\footnote{The holonomy is determined up to conjugacy and $\chi $~is a
class function.}  The finite path integral~\eqref{eq:9} with Wilson operator
~$(S,\chi )_W$ inserted is 
  \begin{equation}\label{eq:30}
     \FG\bigl(X;(S,\chi )_W \bigr) = \sum\limits_{[P]\in \pi _0\bung
     X}\frac{h_{S,\chi }(P)}{\#\Aut P} . 
  \end{equation}

  \begin{remark}[]\label{thm:9}
 Since $\FG$~is a theory of unoriented manifolds, we should not need an
orientation on~$S$ to define the loop operator.  Indeed, we can drop the
orientation and replace~$\chi $ by a function from orientations of~$S$ to
characters of~$G$ which inverts the character when the orientation is reversed. 
  \end{remark}

The 't Hooft operator is defined for $S\subset X$ a co-oriented connected
1-dimensional submanifold and $\gamma \subset G$~a conjugacy class.  Define
the groupoid $\Bun_G\bigl(X;(S,\gamma ) \bigr)$ whose objects are principal
$G$-bundles $P\to X\setminus S$ with holonomy~$\gamma $ around an oriented
linking curve to~$S$.  The finite path integral~\eqref{eq:9} with 't~Hooft
operator~$(S,\gamma )_H$ inserted is
  \begin{equation}\label{eq:31}
     \FG\bigl(X;(S,\gamma )_H \bigr) = \sum\limits_{[P]\in \pi
     _0\Bun_G(X;(S,\gamma ))}\frac{1}{\#\Aut P} .  
  \end{equation}
As in Remark~\ref{thm:9} we can drop the co-orientation and replace~$\gamma $
by a function from co-orientations to conjugacy classes which inverts under
co-orientation reversal.
 
The abstract line operators in an extended $n$-dimensional field theory~$\rF$
are objects in the category~$\rF(S^{n-2})$.  For the 3-dimensional finite gauge
theory 
  \begin{equation}\label{eq:32}
     \FG(\cir)\simeq \Vect_G(G), 
  \end{equation}
where $\Vect_G(G)$~is the category of vector bundles on~$G$ equivariant for
the conjugation action; this is ~\eqref{eq:45} for~$S=\cir$.  The
category~$\Vect_G(G)$ is the Hochschild homology, or in this case also the
Drinfeld center, of $\Vect[G]$; see~\cite[Example~8.5.4]{EGNO}.  The Wilson
loop operators form the full subcategory of equivariant vector bundles
supported at the identity~$e\in G$, which is equivalent to the
category~$\Rep(G)$.  The 't~Hooft operators form the full subcategory of
equivariant vector bundles on which the centralizer~$Z_x(G)$ of each~$x\in G$
acts trivially on the fiber at~$x$.  The general loop operator is an amalgam
of these two extremes.

  \begin{remark}[]\label{thm:60}
 Let~$\rF$ be an $n$-dimensional extended topological field theory and
$S\subset X$ a connected 1-dimensional submanifold of an $n$-manifold~$X$.
The link of~$S$ at each point is diffeomorphic to~$S^{n-2}$, but there is no
preferred diffeomorphism.  Furthermore, the group of diffeomorphisms
of~$S^{n-2}$ may act nontrivially on~$\rF(S^{n-2})$.  Therefore, to specify a
loop operator on~$S$ it is not sufficient\footnote{For example, typically in
3-dimensional Chern-Simons theory~\cite{W3} one imposes a normal framing
of~$S$ to rigidify the $SO_2$-action (Dehn twist, ribbon structure)
on~$\rF(S^1)$.} to give an object of~$\rF(S^{n-2})$.  For the finite gauge
theories~$\rF$ considered in this paper the objects in~$\rF(\cir)$
corresponding to Wilson and 't Hooft operators are $SO_2$-invariant, so no
normal framing is required.  See~\S\ref{subsec:8.1a} for further discussion. 
  \end{remark}

Now suppose $G=A$ is a finite abelian group.  Then the conjugation action
of~$A$ on~$A$ is trivial, and we identify 
  \begin{equation}\label{eq:35}
     \Vect_A(A)\simeq \Vect(A\times A\dual) 
  \end{equation}
by decomposing the representation of~$A$ on each fiber.  Wilson operators are
labeled by vector bundles pulled back under the projection $A\times A\dual\to
A\dual$; 't~Hooft operators by vector bundles pulled back under the
projection $A\times A\dual\to A$.  Electromagnetic duality exchanges~$A$
and~$A\dual$, so exchanges Wilson and 't~Hooft operators.

There are also ``loop'' operators on a compact 3-manifold~$X$ with nonempty
boundary for compact 1-dimensional submanifolds whose boundary is contained
in~$\partial X$ and which intersect~$\partial X$ transversely; such
submanifolds are termed `neat'.  First, let $Y$~be a closed 2-manifold and
fix distinct points $y_1,\dots ,y_k\in Y$.  Excise a small open disk about
each~$y_i$ to form a compact 2-manifold~$Y'$ with $\partial Y'$~diffeomorphic
to the disjoint union of $k$~circles.  Fix a diffeomorphism $(\cir)^{\amalg
k}\to \partial Y'$.  Then viewing~$\partial Y'$ as incoming, the extended
field theory~$\FG$ assigns to~$Y'$ a functor
  \begin{equation}\label{eq:86}
     \FG(Y')\:\VGG\times \cdots\times \VGG\longrightarrow \Vect 
  \end{equation}
Therefore, if each~$y_i$ is labeled by an object of~$\VGG$, then we obtain a
vector space.  

Let $S\subset X$ be a connected oriented normally framed neat 1-dimensional
submanifold, labeled by $W\in \VGG$.  We interpret the result of
applying~$\FG$ to this situation by constructing a 2-morphism in the bordism
category.  Excise a tubular neighborhood~$\nu _S$ of~$S$---a solid
cylinder---to obtain a 3-manifold~$X'$ with corners.  Then $\partial
(X')=Y'\cup_{\cir\amalg \cir}\partial _0\nu _S$, where $Y'$~ is $\partial X$~
with open disks about~$\partial S$ excised and $\partial _0\nu _S$~is a
cylinder---the boundary of~$\nu _S$ with the open disks removed.  Fix a
diffeomorphism $(\cir)^{\amalg 2}\to\partial Y'$.  Then in the bordism
category we obtain the diagram of morphisms
  \begin{equation}\label{eq:87}
     \xymatrix@C+22pt{\cir\amalg \cir\rtwocell<5>^{\partial _0\nu
     _S}_{Y'}{_{\;\;X'}}& \emptyset ^1}  
  \end{equation}
in which $\emptyset ^1$ is the empty 1-dimensional manifold.  Apply~$\FG$ to
obtain
  \begin{equation}\label{eq:88}
     \xymatrix@C+22pt{\VGG\times \VGG\quad \quad\;
     \rtwocell<8>^{\FG(\partial _0\nu 
     _S)}_{\FG(Y')}{_{\qquad \;\FG(X')}}& \Vect_\CC} 
  \end{equation}
For $W\in \VGG$ evaluate~\eqref{eq:88} on~$(W,W)\in \VGG\times \VGG$ to
define ~$\FG$ on~$(X,S,W)$:
  \begin{equation}\label{eq:89}
     \FG(X')(W,W)\in \Hom\bigl(\FG(Y')(W,W),\FG(\partial _0\nu _S)(W,W) \bigr). 
  \end{equation}
Now $\partial _0\nu _S$ is a cylinder with the entire boundary incoming,
which is the ``evaluation morphism'' in the bordism category, hence
$\FG(\partial _0\nu _S)$~is the evaluation morphism
  \begin{equation}\label{eq:90}
     \begin{aligned} \VGG^{\textnormal{op}}\times \VGG&\longrightarrow \Vect
      \\ (W_1,W_2)&\longmapsto \Hom_{\VGG}(W_1,W_2)\end{aligned} 
  \end{equation}
We evaluate~$\FG(Y')$ using the classical gauge theory.  Restriction to the
boundary determines a map of groupoids 
  \begin{equation}\label{eq:91}
     \pi \:\bung{Y'}\longrightarrow \bung{\cir\amalg \cir}\approx G\gpd
     G\times G\gpd G, 
  \end{equation}
where $G$~acts on itself by conjugation.  For $W_1,W_2\in \VGG$ the value of
$\FG(Y')(W_1,W_2)$ is the vector space of global sections of $\pi
^*(W_1^*\boxtimes W\mstrut _2)\to \bung{Y'}$.

  \subsection{Topological boundary conditions; symmetry breaking}\label{subsec:3.4}

Let $G$~be a finite group and $H\subset G$ a subgroup.  Recall the
topological boundary theory $\BH\:1\to \FG$; see~\eqref{eq:11}.  It has
a classical description in which the boundary field is a reduction of a
$G$-bundle to an $H$-bundle.  If $Q\to Y$ is a principal $G$-bundle, then a
reduction to~$H$ is equivalently a section of $Q/H\to Y$.  The boundary
theory counts these sections: $\BH(Y)$~is the function on~$\bung Y$
whose value at~$Q$ is the number of sections.  (There are no automorphisms of
sections, so no weighting in the sum.)  We interpret the topological boundary
data~$\BH$ as \emph{symmetry breaking} from~$G$ to the subgroup~$H$.

Identify~$\FG(\cir)\simeq \Vect_G(G)$ as in~\eqref{eq:32}.   Evaluate
$\BH(\cir)\:\Vect\to\Vect_G(G)$ on~$\CC\in \Vect$ to obtain a
$G$-equivariant vector bundle $V_H\to G$.  We compute it by summing over the
fibers of~\eqref{eq:12}, which for~$M=\cir$ is the map of groupoids
  \begin{equation}\label{eq:26}
     \pi \:H\gpd H\longrightarrow G\gpd G. 
  \end{equation}
Thus $V_H=\pi _*\triv{\CC}$, where $\triv{\CC}\to H\gpd H$ is the trivial
rank one complex vector bundle.  The fiber of~$\pi $ over~$g\in G$ is the
groupoid~$F_g$ whose objects are pairs $(h,\tg)\in H\times G$ such
that~$\tg h\tg\inv =g$.  A morphism $k\:(h,\tg)\to(h',\tg')$ is given by~$k\in H$ such
that $khk\inv =h'$ and~$\tg k\inv =\tg'$.  In particular, if~$g\in H$ then 
  \begin{equation}\label{eq:28}
     \pi _0F_g\cong Z_g(G)/Z_g(H), 
  \end{equation}
where $Z_g(G)$~is the centralizer of~$g$ in~$G$ and $Z_g(H)$~the centralizer
of~$g$ in~$H$.  In general,
  \begin{equation}\label{eq:27}
     (V_H)_g=\Fun(\pi _0F_g). 
  \end{equation}

  \begin{example}[]\label{thm:6}
 If $G=A=\bmut$ then there are two subgroups.  If~$H=\bmut$ then
$V_{\bmut}=\triv{\CC}$ is the trivial bundle with stabilizers acting
trivially.  If~$H=1$ is the trivial group, then the fiber~$(V_{\bmut})_1$ at
$1\gpd A\subset A\gpd A$ is the 2-dimensional regular representation of~$A$,
and the fiber~$(V_{\bmut})_{-1}$ at ${-1}\gpd A$ is the zero vector space.
Using~\eqref{eq:35} in each case $V\to A\times A\dual$~has two rank one
fibers and two zero fibers.  The two cases are exchanged under
electromagnetic duality, which exchanges $A\longleftrightarrow A\dual$.
  \end{example}

  \begin{remark}[]\label{thm:8}
 A central extension $1\longrightarrow\TT\longrightarrow
\widetilde{H}\longrightarrow H\longrightarrow 1$ also gives a topological
boundary condition for~$\FG$, and in fact every indecomposable topological
boundary condition has this form; see~\cite{Os} and \cite[Corollary
7.12.20]{EGNO}.  The boundary theory is a \emph{weighted} counting of
reductions of a principal $G$-bundle to a principal $H$-bundle.  The weight
is derived from the cohomology class in $H^2(B\widetilde{H};\TT)$ of the
central extension.  We do not encounter any nontrivial central extensions in
the application to lattice models, nor do we see a mechanism whereby symmetry
breaking would lead to one: the weights~$\theta $ we use
(Definition~\ref{thm:18} below) are nonnegative, and there are no central
extensions of~$H$ with center~$\RR^{>0}$.  This reasoning leads to
Conjecture~\ref{thm:33}.
  \end{remark}

   \section{Lattice theories on the boundary}\label{sec:4}

Now we introduce the boundary lattice theories.  There is a parameter, a
function on the group used to weight the interactions on each edge of the
lattice, and we introduce the appropriate function space
in~\S\ref{subsec:4.1}.  In~\S\ref{subsec:4.5} we defined precisely our notion
of lattices in 1-~and 2-manifolds.  The boundary theories are defined as
non-extended theories in~\S\ref{subsec:4.2}, and the order and disorder
operators introduced in~\S\ref{subsec:4.3}.  We conclude by unifying
electromagnetic and Kramers-Wannier dualities in~\S\ref{subsec:4.4}.

  \subsection{Weighting functions}\label{subsec:4.1}

  \begin{definition}[]\label{thm:11}
 Let $A$~be a finite abelian group.  A function $\theta \:A\to\RR$ is
\emph{admissible} if (i)~$\theta (a)\ge0$ for all~$a\in A$; (ii)~$\theta
(-a)=\theta (a)$ for all~$a\in A$; and (iii)~$\theta \dual(a\dual)\ge0$ for
all~$a\dual\in A\dual$, where $\theta \dual=\sF\theta \:A\dual\to\RR$ is the
Fourier transform of~$\theta $. 
  \end{definition}

\noindent
 See~\eqref{eq:21} for the definition of the Fourier transform.  Observe that
$\theta \dual$~is admissible if and only if $\theta $~is.  It
follows\footnote{Proof: Write $\theta \dual=\phi ^2$ for a positive
function~$\phi $, and then for any~$a\in A$ we have by Cauchy-Schwarz
  \begin{equation}\label{eq:56}
     \theta (a) = \frac{1}{\sqrt{\#A}}\sum\limits_{a\dual\in
     A\dual}\left[\overline{\chi (a,a\dual)}\phi (a\dual)\right]\phi (a\dual) \le
     \frac{1}{\sqrt{\#A}}\sum\limits_{a\dual\in A\dual} \phi (a\dual)^2 = \theta
     (0). 
  \end{equation}
} from these conditions that $\theta $~achieves its maximal value at~$a=0$,
which means that it models a \emph{ferromagnetic interaction}.  The set of
admissible functions on~$A$ is a convex subset of the vector space of all
real-valued functions on~$A$.

  \begin{example}[$A=\bmu 5$]\label{thm:12}
 A nonnegative even real-valued function~$\theta $ on~$A=\bmu 5$ is
determined by 3~nonnegative numbers 
  \begin{equation}\label{eq:51}
     \begin{aligned} a&=\theta (1) \\ b&=\theta (\lambda ) = \theta (\lambda
     ^4) \\ c&=\theta 
      (\lambda ^2)=\theta (\lambda ^3),\end{aligned} 
  \end{equation}
where $\lambda =e^{2\pi i/5}$.
The Fourier transform~$\theta \dual$ on~$A\dual=\zmod5$ takes values 
  \begin{equation}\label{eq:52}
     \begin{aligned} \theta \dual(0) &= (a + 2b + 2c)/\sqrt5 \\ \theta
      \dual(1 )=\theta \dual(4) &= (a + pb + qc)/\sqrt5 \\
      \theta \dual(2)=\theta \dual(3) &= (a + qb +
      pc)/\sqrt5,\end{aligned} 
  \end{equation}
where $p=2\cos(2\pi /5)$ and $q=2\cos(4\pi /5)$.  If $a=0$, then positivity
of~$\theta \dual$ forces $b=c=0$ as well, so we may assume~$a\neq 0$ and
multiplicatively normalize~$a=1$.  The region in the $(b,c)$-plane in which
the six numbers in~\eqref{eq:51} and~\eqref{eq:52} are nonnegative is the
convex hull of its 4~extreme points
  \begin{equation}\label{eq:53}
     (b,c) = (0,0),\;(1,1),\;(\frac p2,\frac q2),\;(\frac q2,\frac p2). 
  \end{equation}
For the first $\theta $~is the characteristic function of the trivial
subgroup $1\subseteq\bmu 5$, for the second $\theta $~is the characteristic
function of the full subgroup $\bmu 5\subseteq \bmu 5$; the Fourier transform
exchanges them.  The other two extreme points do not correspond to
subgroups of~$\bmu 5$. 
  \end{example}

  \begin{example}[$A=\bmu 4$]\label{thm:21}
 After dividing by positive multiplicative scaling, the space of
admissible~$\theta $ is a convex planar set with 4~extreme points: three are
characteristic functions of subgroups, and the fourth takes values
$a,a/2,0,a/2$ for some positive real number~$a$.
  \end{example}

If $G$~is a possibly nonabelian finite group, then there is a generalization
of the Fourier transform~\eqref{eq:21}.  Namely, if $\theta \:G\to\CC$ and
$\rho \:G\to\Aut(W)$ is a finite dimensional complex representation of~$G$,
then define 
  \begin{equation}\label{eq:47}
     \theta \dual(\rho )=\frac{1}{\sqrt{\#G}}\sum\limits_{g\in G}\theta
     (g)\overline{\rho (g)}\;\in \End(W). 
  \end{equation}
It suffices to evaluate~$\theta \dual$ on a representative set of irreducible
representations.

  \begin{definition}[]\label{thm:18}
  Let $G$~be a finite group.  A function $\theta \:G\to\RR$ is
\emph{admissible} if (i)~$\theta (g)\ge0$ for all~$g\in G$; (ii)~$\theta
(g\inv )=\theta (g)$ for all~$g\in G$; and (iii)~$\theta \dual(\rho )$ is a
nonnegative operator for each irreducible unitary representation~$\rho
\:G\to\Aut(W)$. 
  \end{definition}

\noindent
 Observe that the evenness condition~(ii) implies that $\theta \dual(\rho
)$~is self-adjoint.

  \subsection{Latticed 1- and 2-manifolds}\label{subsec:4.5}

The ``lattices'' we use in this paper are embedded in compact 1-~and
2-manifolds.  As a preliminary we define a model \emph{solid $n$-gon}
for~$n\in \ZZ^{\ge2}$.  If~$n\ge3$ then a solid $n$-gon is, say, the convex
hull of the $n^{\textnormal{th}}$~roots of unity in~$\CC$.  A solid 2-gon is,
say, the set
  \begin{equation}\label{eq:38}
     \left\{(x,y)\in \AA^2: \frac{x^2-1}{2}\le y\le\frac{1-x^2}{2}\right\}. 
  \end{equation}

  \begin{definition}[]\label{thm:13}
 \ 
 \begin{enumerate}[label=\textnormal{(\roman*)}]

 \item A \emph{latticed 1-manifold}~$(S,\nu )$ is a closed 1-manifold~$S$
equipped with a finite subset~$\nu \subset S$ which intersects each component
of~$S$ in a set of cardinality~$\ge2$.

 \item A \emph{latticed 2-manifold}~$(Y,\Lambda )$ is a compact
2-manifold~$Y$ equipped with a smoothly embedded finite graph~$\Lambda
\subset Y$ which intersects each component of~$Y$ nontrivially.  The closure
of each component of~$Y\setminus \Lambda $ is a smoothly embedded solid
$n$-gon with~$n\ge 2$.  The intersection $\Lambda \cap\partial Y$ must
determine a latticed 1-manifold.  Furthermore, if $e$~is an edge of~$\Lambda
$, then either (a)~$e\cap\partial Y=\emptyset $, (b)~$e\cap\partial Y$~is a
single boundary vertex of~$e$, or (c)~$e\subset \partial Y$ .

 \end{enumerate}
  \end{definition}

\noindent
 A component of~$Y\setminus \Lambda $ is called a \emph{face}.  We use
`$\Vertices(\Lambda )$', `$\Edges(\Lambda )$' to denote the sets of vertices and
edges of the lattice~$\Lambda $.  It is understood that an embedding of a
solid $n$-gon takes vertices to vertices and edges to edges.  There is no
choice of embedding in the data of a latticed 2-manifold, only a condition
that an embedding exists.  Our definition rules out loops in~$\Lambda $ but
allows faces which share more than a single edge.  Up to cyclic symmetry a
connected latticed 1-manifold is homeomorphic to a connected finite graph
whose vertices have valence two: a polygon.  We use the notation~$(S,\Pi )$
for a latticed 1-manifold; $\Pi \subset S$~is an embedded graph, each
component of which is an embedding of an $n$-gon, $n\in \ZZ^{\ge2}$.

  \begin{definition}[]\label{thm:14}
 Let~$(Y,\Lambda )$ be a latticed 2-manifold.  A \emph{dual latticed
2-manifold}~$(Y,\Lambda \dual)$ is characterized by bijections $\pi
_0(Y\setminus \Lambda )\to\Vertices(\Lambda \dual)$ and $\Edges(\Lambda
)\to\Edges(\Lambda \dual)$ such that (i)~the vertex of~$\Lambda \dual$
corresponding to a face~$f$ is contained in the interior of~$f$ and
(ii)~corresponding edges~$e\subset \Lambda $ and~$e\dual\subset \Lambda \dual$
intersect transversely in a single point.
  \end{definition}

  \begin{proposition}[]\label{thm:15}
 Let~$(Y,\Lambda )$ be a latticed 2-manifold.  Then a dual lattice $\Lambda
\dual$ exists.
  \end{proposition}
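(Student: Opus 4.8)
The plan is to realize $\Lambda\dual$ as the Poincar\'e dual cell decomposition of the regular CW structure that $\Lambda$ puts on $Y$, and then to check the requirements of Definitions~\ref{thm:13}(ii) and~\ref{thm:14} by hand.

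First I would pin down the vertices and edges of $\Lambda\dual$. For each face $f$ (component of $Y\setminus\Lambda$) pick a point $v_f$ in its interior and set $\Vertices(\Lambda\dual)=\{v_f\}$; then the face--vertex bijection and condition~(i) of Definition~\ref{thm:14} hold tautologically. The combinatorial fact I would use is that \emph{every edge $e$ of $\Lambda$ is adjacent to the closures of exactly two faces when $e\cap\partial Y$ is empty or a single vertex, and to exactly one face when $e\subset\partial Y$; and in the two-face case the two faces are distinct.} Distinctness is where the hypothesis that each face is a \emph{smoothly embedded} solid polygon is needed: if one face bordered an interior edge $e$ on both sides, the polygon parametrizing its closure would carry two of its own edges to $e$, violating injectivity of the embedding. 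Now for $e$ adjacent to faces $f,f'$ choose a smooth arc $e\dual$ from $v_f$ to $v_{f'}$ meeting $e$ transversally in one point and disjoint from $\Lambda$ elsewhere; a collar of $\partial Y$ together with a regular neighborhood of $\Lambda$ lets one take all the $e\dual$ pairwise disjoint apart from the shared endpoints $v_f$. Then $\Lambda\dual:=\{v_f\}\cup\bigcup_e e\dual$ is a smoothly embedded finite graph, $e\mapsto e\dual$ is the edge bijection, and condition~(ii) of Definition~\ref{thm:14} holds. (Equivalently, take $\Lambda\dual$ to be the dual $1$-skeleton inside a barycentric subdivision of the CW complex, which yields the same graph and makes transversality and disjointness automatic; the space of all such choices is contractible, which is the ``contractible space of isotopies'' of \S\ref{isingref}.)

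It remains to check that $(Y,\Lambda\dual)$ is itself a latticed $2$-manifold. By the standard behaviour of a dual cell decomposition, the components of $Y\setminus\Lambda\dual$ are indexed by the vertices of $\Lambda$, and the component $C$ indexed by $v$ deformation retracts onto $v$; hence $C$ is an open disk, and running around $\partial C$ exhibits $\overline C$ as a solid $n$-gon whose sides are the arcs $e\dual$ with $e$ incident to $v$, so $n$ equals the valence of $v$ in $\Lambda$. The requirement $n\ge2$ -- equivalently, that $\Lambda\dual$ has no loops and no monogon faces -- therefore reduces to the single claim that \emph{every vertex of $\Lambda$ has valence $\ge2$}: for an interior vertex this is once again the embeddedness of the faces (a univalent interior vertex would force the surrounding polygon to repeat an edge), and for a boundary vertex it holds because the two arcs of $\partial Y$ issuing from it are themselves edges of $\Lambda$. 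This gives existence.

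The step needing genuine care -- and the main obstacle -- is the behaviour along $\partial Y$: one must route the duals of the edges that meet $\partial Y$, and fix the dual faces touching $\partial Y$, so that $\partial Y$ is again a subcomplex of $\Lambda\dual$, every edge of $\Lambda\dual$ falls into one of the three types of Definition~\ref{thm:13}(ii), and the boundary-touching dual faces still have at least two sides. This is exactly the point where one must invoke the dual cell decomposition of a \emph{manifold with boundary}, with the customary attention to collars and half-disc charts; in the interior the construction is the classical Poincar\'e dual complex and the verifications above are routine.
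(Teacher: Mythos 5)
Your construction is essentially the paper's: place a dual vertex at the center of each face and form $e\dual$ from arcs joining the centers of the (one or two) faces adjacent to $e$ through a chosen interior point of $e$, i.e.\ the standard dual cell decomposition, which is exactly the proof given via the embeddings $\psi_f$ of the model polygons. Your additional verifications --- that embeddedness of the closed faces forces each interior edge to bound two distinct faces and each vertex to have valence $\ge 2$, so that $(Y,\Lambda\dual)$ again satisfies Definition~\ref{thm:13}(ii) --- are correct and fill in checks the paper leaves implicit, while both treatments handle the case $\partial Y\neq \emptyset$ only informally.
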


\noindent
 In fact, the space of dual lattices~$\Lambda \dual$ is contractible, though
we do not give a formal proof here.

  \begin{proof}
 Choose a point~$p_e$ in the interior of each edge of~$\Lambda $ and fix an
embedding~$\psi _f$ of a solid $n$-gon onto each closed face~$f$
of~$(Y,\Lambda )$.  The vertices of~$\Lambda \dual$ are the images~$\psi _f(c)$
of the centers of the model solid $n$-gons.  The edges are constructed from the
image under~$\psi _f$ of line segments joining~$c$ to~$\psi _f\inv (p_e)$ for
each edge~$e$ in the boundary of~$f$; then straighten the resulting angle at
each~$p_e$.
  \end{proof}

  \subsection{Lattice models as boundary theories}\label{subsec:4.2}

Let $G$~be a finite group and $\FG$~the 3-dimensional finite gauge theory
discussed in~\S\ref{sec:3}.  Fix an admissible function $\theta \:G\to\RR$
(Definition~\ref{thm:18}).  We now define a 2-dimensional boundary
theory~$\rI_{(G,\theta )}$ for~$\FG$, but on a bordism category of 1-~and
2-manifolds; we do not ``extend down to points''.  (We will extend down to
points in~\S\ref{sec:8}.) More precisely, the boundary theory
$\rI=\rI_{(G,\theta )}$ is defined on the bordism category of latticed
1-manifolds and latticed 2-dimensional bordisms between them. 
 
Let $(M,\Lambda )$~be a latticed 1-~or 2-manifold and $\pi \:Q\to M$ a
principal $G$-bundle.  The boundary theory is a finite $\sigma $-model whose
fields comprise the finite set 
  \begin{equation}\label{eq:39}
     \conf Q\Lambda =\left\{ \textnormal{sections of }Q \Res{\pi \inv
     \bigl(\Vertices(\Lambda ) \bigr)}\to\Vertices(\Lambda ) \right\} . 
  \end{equation}
Suppose $(Y,\Lambda )\:(S_0,\Pi _0)\to(S_1,\Pi _1)$ is a 2-dimensional
latticed bordism between latticed 1-manifolds, and $\pi \:Q\to Y$ a principal
$G$-bundle, then restriction to the boundaries defines a correspondence
diagram of finite sets 
  \begin{equation}\label{eq:40}
     \begin{gathered} \xymatrix{&\conf Q\Lambda \ar[dl]_{r_0}
     \ar[dr]^{r_1} \\ \conf{R_0}{\Pi _0}&& \conf{R_1}{\Pi _1}}
     \end{gathered} 
  \end{equation}
in which $R_0\to S_0$ and $R_1\to S_1$ are the restrictions of $Q\to Y$ to
the incoming and outgoing boundaries, respectively.  Define a function 
  \begin{equation}\label{eq:41}
     K\:\conf Q\Lambda \longrightarrow \RR, 
  \end{equation}
a sort of integral kernel, as follows.  If~$e\subset \Lambda $ is an edge,
then parallel transport along~$e$ identifies the fibers of $Q\to Y$ over the
boundary points of~$e$.  The values of a section $s\in \conf Q\Lambda $
over these two vertices are related by an element $g(s;e)\in G$, defined up
to inversion depending on the order of the boundary points (orientation
of~$e$).  Since the function $\theta \:G\to\RR$ is even, the number
  \begin{equation}\label{eq:42}
     K(s)=\prod\limits_{e\in \Edges(\Lambda )\setminus \Edges(\Lambda \cap
     S_1)}\theta \bigl(g(s;e) \bigr)  
  \end{equation}
is independent of edge orientations.  In~\eqref{eq:42} we multiply the
weighting factor over incoming and interior edges of~$\Lambda $, but not over
outgoing edges.  Under composition of bordisms the correspondence
diagrams~\eqref{eq:40} compose by fiber product and the integral
kernels~\eqref{eq:42} multiply.

  \begin{definition}[]\label{thm:16}
 \

 \begin{enumerate}[label=\textnormal{(\roman*)}]

 \item For a latticed 1-manifold~$(S,\Pi )$ and principal $G$-bundle $R\to S$
set 
  \begin{equation}\label{eq:43}
     \rI(S,\Pi )[R]=\Fun\bigl(\conf R\Pi  \bigr). 
  \end{equation}

 \item For a latticed bordism $(Y,\Lambda )\:(S_0,\Pi _0)\to(S_1,\Pi _1)$ set 
  \begin{equation}\label{eq:44}
     \rI(Y,\Lambda )[Q] = (r_1)_*\circ K\circ (r_0)^*\:\rI(S_0,\Pi
     _0)[R_0]\longrightarrow \rI(S_1,\Pi _1)[R_1];
  \end{equation}
see~\eqref{eq:40} and~\eqref{eq:41} for notation.

 \end{enumerate}
  \end{definition}

\noindent
 In~\eqref{eq:44} `$K$'~is multiplication by the function~$K$.  It is
straightforward to extend~\eqref{eq:43} to a functor $\bung S\to\Vect$, i.e.,
to an equivariant vector bundle over~$\bung S$, or equivalently---according
to~\eqref{eq:45}---an object in the category~$\FG(S)$.  Then
\eqref{eq:44}~defines an equivariant map between the equivariant vector
bundles $\rI(S_0,\Pi _0)$ and~$\rI(S_1,\Pi _1)$.  This is precisely what a
boundary theory $\rI\:1\to \FG$ must do.

If $(Y,\Lambda )$~is a \emph{closed} latticed surface, then
\eqref{eq:44}~reduces to the function on~$\bung Y$ whose value at~$Q$ is the
partition function 
  \begin{equation}\label{eq:57}
     \rI(Y,\Lambda )[Q] = \sum\limits_{s\in \conf Q\Lambda }\;
     \prod\limits_{e\in \Edges(\Lambda )}\theta \bigl(g(s;e) \bigr) 
  \end{equation}
of the Ising model. 

  \begin{remark}[]\label{thm:17}
 Let $(S,\Pi )$ be a latticed 1-manifold.  For each principal $G$-bundle
$R\to S$ the boundary theory produces a vector space~$\rI(S,\Pi )[R]$.  If
$R\to S$ is the trivial $G$-bundle $S\times G\to S$, then this is the usual
state space in the quantum mechanical interpretation of the Ising model.  For
nontrivial $R\to S$ it is the state space of a ``twisted sector''.  Now form
the bordism $\zo\times (S,\Pi )\:(S,\Pi )\to(S,\Pi )$ in which $Y=\zo\times
S$ and $\Lambda =\zo\times \Pi $.  For $R\to S$ set $P=\zo\times R\to Y$.
Then the linear endomorphism~$\rI(Y,\Lambda )[P]$ of~$\rI(S,\Pi )[R]$ is the
``transfer matrix'' in the sector defined by $R\to Y$.  It may be interpreted
as~$\exp(-H_R)$---Wick rotated discrete time evolution over a single unit of
time---where $H_R$~is the Hamiltonian operator in that sector. 
  \end{remark}

  \begin{remark}[]\label{thm:22}
 Suppose $\theta $~is a multiple of the characteristic function of a
subgroup~$H\subset G$.  Then $K(s)$ in~\eqref{eq:42} vanishes unless all
parallel transports~$g(s;e)$ lie in~$H$.  If so, then $s$~determines a
section of the fiber bundle $Q/H\to \Lambda $, and it extends to a section
over~$Y$, so a reduction of $Q\to Y$ to the subgroup~$H$.  In this case the
boundary theory~$\rI_{(G,\theta )}$ is\footnote{up to a multiplicative factor
$C^v$, where $v$~is the number of vertices in a lattice.  A similar factor
occurs in Proposition~\ref{thm:62}.  To correct for these factors the
integral kernel~\eqref{eq:42} should include products over vertices and faces
of constants~$C$ and~$C'$, so the boundary lattice theory is parametrized
by~$(C,\theta ,C')$; see Definition~\ref{isingpart}.} the topological
boundary theory~\eqref{eq:11}.
  \end{remark}

  \subsection{Order and disorder operators}\label{subsec:4.3}

We continue with a finite group~$G$ and an admissible function $\theta \:G\to
\RR$.   
 
Suppose $X$~is a compact 3-manifold with latticed boundary~$(\bX,\Lambda )$.
As in~\eqref{eq:4} we can evaluate the pair $(\FG,\rI_{(G,\theta )})=(\FG,\rI)$
on~$X$ to compute a number: $\rI(\bX,\Lambda )$~is a function on~$\bung{\bX}$
and $\FG$~evaluated on~ $X$ as a bordism with $\bX$~incoming ~is a linear
functional on $\Fun\bigl(\bung{\bX} \bigr)$.  The explicit formula
combines~\eqref{eq:9} and~\eqref{eq:44}: 
  \begin{equation}\label{eq:48}
     (\FG,\rI)(X,\Lambda ) = \sum\limits_{[P]\in \pi _0\bung X}\frac{1}{\#\Aut
     P}\sum\limits_{s\in \conf{(\partial P)}\Lambda } \;\prod\limits_{e\in
     \Edges(\Lambda )}\theta \bigl(g(s;e) \bigr) . 
  \end{equation}

  \begin{figure}[ht] 
  \centering
  \includegraphics[scale=1]{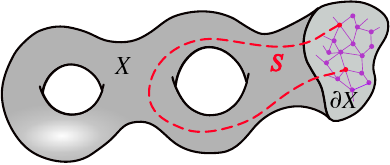}\qquad \;
  \includegraphics[scale=1]{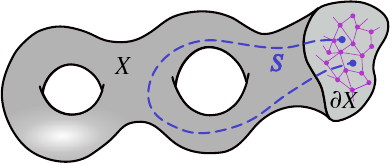}
  \caption{Wilson loop/order operator (left); 't Hooft loop/disorder operator
  (right)}\label{fig:2}
  \end{figure}

Recall from~\S\ref{subsec:3.3} that $\FG$~admits two distinguished kinds of
loop operators associated to an embedded oriented\footnote{See
Remark~\ref{thm:9} to drop the orientation.}  circle: Wilson and 't Hooft
operators.  We also discussed operators on neatly embedded closed
intervals~$S$, and these carry over to operators in the theory~$(\FG,\rI)$ on a
3-manifold with latticed boundary.  For Wilson operators we require the
boundary of~$S$ to lie in $\Vertices(\Lambda )\subset \Lambda \subset \bX$.
Fix a character $\chi \:G\to\TT$.  Then for a principal $G$-bundle $P\to X$
and a section $s\in \conf{(\partial P)}\Lambda $ over $\Vertices(\Lambda
)$ we define
  \begin{equation}\label{eq:49}
     h_{S,\chi }(P,s) = \chi \bigl(g(s;S) \bigr), 
  \end{equation}
where as in~\eqref{eq:42} the group element~$g(s,S)\in G$ sends~$s(\partial
_-S)$ to~$s(\partial _+S)$, after parallel transport along~$S$.  The
partition function with this ``Wilson path operator'' inserted is 
  \begin{equation}\label{eq:50}
     (\FG,\rI)(X,\Lambda ) = \sum\limits_{[P]\in \pi _0\bung X}\frac{1}{\#\Aut
     P}\sum\limits_{s\in \conf{(\partial P)}\Lambda } h_{S,\chi }(P,s)
     \prod\limits_{e\in \Edges(\Lambda )}\theta \bigl(g(s;e) \bigr) . 
  \end{equation}

  \begin{remark}[]\label{thm:19}
 In the language of spin systems $h_{S,\chi }(P,s)$~is called an \emph{order
operator}.  It is usually defined for a single vertex of~$\Lambda $ rather than
a pair of vertices connected by a path (which may be contained in~$\bX$ or
can wander into the interior of~$X$).  Indeed, if we restrict to the
untwisted sector in which $P\to X$ is the trivial $G$-bundle then we can
define the order operator at a single vertex, but for general $P\to X$ we
need the path~$S$.  We discuss order operators without paths below.
  \end{remark}

We turn now to the 't Hooft operator, for which we require the boundary
of~$S$ to lie in~$\bX\setminus \Lambda $.  Fix a conjugacy class~$\gamma
\subset G$.  As in~\eqref{eq:31} we let $\Bun_G\bigl(X;(S,\gamma ) \bigr)$ be
the groupoid whose objects are principal $G$-bundles $P\to X\setminus S$ with
holonomy~$\gamma $ about~$S$.  A bundle $P\to X\setminus S$ is defined
on~$\Lambda \subset \bX$, hence $\conf P\Lambda $ is still defined.  The
partition function with this ``'t Hooft path operator'' inserted is given
by~\eqref{eq:48} with~$\bung X$ replaced by $\Bun_G\bigl(X;(S,\gamma )
\bigr)$.  In the language of spin systems this is a \emph{disorder operator}.

There are more general point operators in the boundary theory which do not
require the point to lie at the end of a 1-manifold~$S$.  First, we consider
general defects in the topological gauge theory~$\FG$ on a closed
2-manifold~$Y$.  Suppose $y_1,\dots ,y_N\in Y$ is a finite set of points.
Let $Y'$ denote $Y$~with disjoint open disks~$D_{y_\alpha }$ about
the~$y_\alpha $ removed, and read~$Y'$ as a bordism with $\partial
Y'$~incoming.  Fix diffeomorphisms $\cir\xrightarrow{\;\approx \;}\partial
D_{y_\alpha }$.  Then, as in~\eqref{eq:91}, there is a fiber bundle of
groupoids
  \begin{equation}\label{eq:132}
     \pi \:\bung{Y'}\longrightarrow (G\gpd G)^{\times N}. 
  \end{equation}
Fix vector bundles $W_\alpha \to G\gpd G$, so $G$-equivariant vector bundles
$W_\alpha \to G$, and use them as inputs into~$\FG(Y')$.  Then
$\FG(Y')(W_1,\dots ,W_N)$ is the vector space of sections of 
  \begin{equation}\label{eq:133}
     \pi ^*(W_1\boxtimes\cdots\boxtimes W_N)\longrightarrow \bung{Y'}. 
  \end{equation}
Observe that because of nontrivial automorphisms of $G$-bundles, this may be
the zero vector space. 
 
We now describe point operators in the boundary theory $\rI=\rI_{G,\theta }$
on a latticed surface~$(Y,\Lambda )$.  There are two types of such point
defects: order and disorder operators.  For an order operator at~$y_i$ we
have $y_i\in \Vertices(\Lambda )$; the vector bundle $W_i\to G\gpd G$ is
supported at~$e\gpd G$, so is a representation of~$G$; and we fix a
vector~$\xi _i\in W_i$.  (Identify~$W_i$ with its fiber at~$e$.)  For a
disorder operator at~$y_j$ we have $y_j\in Y\setminus \Lambda $; the vector
bundle $W_j\to G\gpd G$ has trivial action of centralizers, so is a vector
space~$W_{j,g/G}$ for each conjugacy class~$g/G$; and we fix a vector $\xi
_{j,g/G}\in W_{j,g/G}$.  An irreducible $W_j\to G\gpd G$ is supported on a
single conjugacy class; the vector~$\xi $ vanishes at other conjugacy
classes.  Suppose there is order operator data at vertices~$v_1,\dots ,v_L$
and disorder operator data at faces $f_{1},\dots ,f_M$.  Then the
generalization of~\eqref{eq:57} with these point defects is
  \begin{multline}\label{eq:134}
     \quad\rI(Y,\Lambda )(W_1,\dots ,W_{L+M})[Q] \\= \sum\limits_{s\in \conf
     Q\Lambda }\left\{
     \bigotimes\limits_{i=1}^L \bigl(s(v_i)\times \xi _i \bigr) \;\otimes
     \bigotimes\limits_{j=K+1}^M \xi _{j,\hol_j(Q)}\right\}
     \prod\limits_{e\in \Edges(\Lambda )}\!\!\theta \bigl(g(s;e) \bigr) .
  \end{multline}
Here $Q\to Y'$ is a $G$-bundle; $\hol_j(Q)$ is the holonomy about~$f_j$, a
conjugacy class in~$G$; and $s(v_i)\times \xi _i\in Q_{v_i}\times \mstrut _G
W_i$ is a vector in the vector space defined by mixing the $G$-torsor
$Q_{v_i}$ with the $G$-representation~$W_i$.

  \subsection{Kramers-Wannier duality as electromagnetic duality}\label{subsec:4.4}

We investigate the effect of electromagnetic duality in finite abelian gauge
theory~(\S\ref{subsec:3.2}) on the Ising boundary theories.  The following is
stated earlier as Theorem~\ref{maintheorem}.

  \begin{theorem}[]\label{thm:25}
 Electromagnetic duality for 3-dimensional finite abelian gauge theory
extends to the Ising boundary conditions of lattice theories, whereon it
becomes Kramers-Wannier duality. It interchanges the action~$\theta $ with
its Fourier transform~$\theta \dual$.  Order operators of the Ising model are
boundary points of Wilson loops, disorder operators boundary points of 't
Hooft loops, and they are interchanged under duality. 
  \end{theorem}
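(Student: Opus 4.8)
The plan is to realize electromagnetic duality on oriented latticed surfaces as the Poincar\'e--Fourier transform~\eqref{eq:37}, compute its effect on the Ising vector in cochain language, and then upgrade the resulting identity of vectors to an identity of boundary theories. The first step is a \emph{cochain formula} for $\rI_{(A,\theta )}$: on a closed latticed surface $(Y,\Lambda )$, a flat $A$-bundle $Q$ is recorded, after trivializing over the $1$-skeleton, by a cocycle $a\in \ker \delta ^1\subset A^{E}$ ($E=\Edges(\Lambda )$), a section of $Q$ over the vertices becomes an element $s\in A^{V}$, and the edge element $g(s;e)$ from~\eqref{eq:42} equals $(\delta ^0s-a)(e)$ up to the sign that evenness of $\theta $ kills; hence, with $\Theta :=\bigotimes \nolimits_{E}\theta $ and $B^1=\Image \delta ^0$,
\[
   \rI(Y,\Lambda )[Q]\;=\;\sum \nolimits_{s\in A^{V}}\Theta (\delta ^0s-a)\;=\;\#H^0(Y;A)\cdot (\Theta *\Delta _{B^1})(a).
\]
This is $B^1$-invariant (replace $a$ by $a+\delta ^0\lambda $ and $s$ by $s+\lambda $), so it descends to a function on $H^1(Y;A)$, and at $a=0$ it is the ordinary Ising partition function~\eqref{measure}. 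Reading the dual-lattice cochains as the second complex of~\eqref{chains} ($V\dual=F$, $E\dual=E$, $F\dual=V$, $\delta ^0_{\Lambda \dual}=\partial _2$), the same formula for $(A\dual,\theta \dual,\Lambda \dual)$ gives $\rI(Y,\Lambda \dual)[\alpha ]=\#H^0(Y;A\dual)\cdot(\Theta \dual*\Delta _{B_1(A\dual)})(\alpha )$ for $\alpha \in \ker \partial _1$.

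Next I would carry out the Fourier computation, which is just the Parseval identity of~\S\ref{fourier} applied at the level of functions rather than numbers. The Poincar\'e--Fourier transform pairs $H^1(Y;A)$ with $H^1(Y;A\dual)\cong \ker \partial _1/\Image \partial _2$ through the edgewise pairing $\langle a,\alpha \rangle _{E}=\sum _{e}\langle a(e),\alpha (e)\rangle $, well defined on cohomology because $\langle B^1,\ker \partial _1\rangle =0=\langle \ker \delta ^1,\Image \partial _2\rangle $. Lifting $\rI(Y,\Lambda )$ to $\ker \delta ^1$ and substituting $c=a-b$ in the double sum over $(a,b)\in \ker \delta ^1\times B^1$ (using $\langle b,\alpha \rangle =0$) collapses $\widehat{\rI(Y,\Lambda )}(\alpha )$ to a constant times $\sum _{c\in \ker \delta ^1}\overline{\langle c,\alpha \rangle _{E}}\,\Theta (c)$; writing the indicator of $\ker \delta ^1$ as an average of characters over its annihilator $\Image \partial _2$ and recognizing the edgewise Fourier transform $\sum _{c\in A^{E}}\overline{\langle c,x\rangle _{E}}\,\Theta (c)=\sqrt{\# A^{E}}\;\Theta \dual(x)$ (with $\Theta \dual=\bigotimes _{E}\theta \dual$, real since $\theta $ is even) yields
\[
   \widehat{\rI(Y,\Lambda )}(\alpha )\;=\;(\mathrm{const})\cdot (\Theta \dual*\Delta _{B_1(A\dual)})(\alpha )\;=\;(\mathrm{const})\cdot \rI(Y,\Lambda \dual)[\alpha ].
\]
The constant depends only on $A$, on the topology of $Y$, and on the combinatorics of $\Lambda $, and is absorbed into the normalization of the boundary lattice theory (cf.\ the footnote to Remark~\ref{thm:22}). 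Thus the ``missing summation over $H^1(Y;A)$'' diagnosed in~\S\ref{failure} has disappeared precisely because $\rI$ is now a \emph{function} on $H^1$.

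To get an identity of \emph{boundary theories} --- natural transformations $1\to \cG_A$ on the bordism category of latticed $1$- and $2$-manifolds --- I would rerun this computation relative to the boundary: the correspondence diagram~\eqref{eq:40} and the multiplicativity of the integral kernels~\eqref{eq:42} under gluing, combined with naturality under bordism of the equivalence~\eqref{eq:37} (itself part of an equivalence of \emph{oriented} $3$-dimensional theories, Remark~\ref{thm:23}), show that the Fourier transform intertwines $\rI(Y,\Lambda )[Q]$ with $\rI(Y,\Lambda \dual)[Q\dual]$ fibrewise over the Poincar\'e-dually identified groupoids of boundary bundles; on a latticed circle this is the once-relative version of the same identity. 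For the order/disorder clause: by~\S\ref{subsec:3.3} electromagnetic duality exchanges the two projections in $\cG_A(\cir)\simeq \Vect(A\times A\dual)$, hence Wilson and 't~Hooft loop operators, and since an order operator is the endpoint of a Wilson path and a disorder operator the endpoint of a 't~Hooft path (\S\ref{subsec:4.3}), these are interchanged. Repeating the Fourier computation of~\eqref{eq:50} with a Wilson path operator $h_{S,\chi }$ present, its contribution $\langle \chi \otimes [S],\,\delta ^0s-a\rangle $ translates the summation domain on the dual side from $\ker \partial _1$ to $\ker \partial _1+\chi \otimes [S]$ --- precisely the \emph{frustrated} $(A\dual,\theta \dual,\Lambda \dual)$-partition function with line of frustration the dual path, i.e.\ the disorder correlator; changing $S$ within its homology class rel endpoints alters $\chi \otimes[S]$ by an element of $\Image \partial _2\subseteq \ker \partial _1$, leaving the coset fixed, which recovers the standard fact of~\cite[\S2.2.7]{ID}.

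\textbf{Main obstacle.} The real work is organizational rather than conceptual: keeping straight the Poincar\'e-duality identification of the two cellular models of $H^1(Y;A\dual)$ (cochains of $\Lambda $ versus of $\Lambda \dual$) and its orientation-dependence when $A\neq \bmut$; tracking the several normalization constants so that, after fixing the boundary-theory constants, the two Ising partition functions agree exactly; and --- most delicately --- promoting the closed-surface equality to a genuine natural transformation of $(1,2)$-dimensional theories, i.e.\ verifying the compatibility squares for \emph{every} latticed bordism, not merely the closed case.
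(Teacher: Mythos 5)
Your proposal is correct and follows essentially the same route as the paper's \S\ref{subsec:4.4}: write the gauged Ising vector in cochain language as $\#H^0(Y;A)\cdot(\Theta*\Delta_{B^1})$, apply the finite Fourier transform with the Poincar\'e-dual identification of the $\Lambda$- and $\Lambda\dual$-cochain complexes (the paper packages your Parseval computation into Lemma~\ref{thm:61}), and treat order/disorder insertions as a character twist of the descent and a translation of the summation domain onto a torsor, as in~\eqref{eq:152}. The remaining issue you flag --- promoting the closed-surface identity to an equivalence of boundary theories and verifying that electromagnetic duality actually induces this Fourier transform --- is exactly what the paper also defers (Remark~\ref{thm:64}) to the more general argument of \S\ref{subsec:8.1}.
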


\noindent
 The remainder of this section is devoted to an explicit proof on a closed
latticed surface.  The isomorphism of full boundary lattice theories is
proved in a more general context in~\S\ref{subsec:8.1}.
 
As a preliminary we state without proof properties of the finite Fourier
transform~\eqref{eq:21}.

  \begin{lemma}[]\label{thm:27}
 Let $\phi \:\pA\to\pB$ be a homomorphism of finite abelian groups, $\phi
\dual\:\pB\dual\to\pA\dual$ the Pontrjagin dual homomorphism, and
$\FA\:\Fun(\pA)\to\Fun(\pA\dual)$ the Fourier transform.

 \begin{enumerate}[label=\textnormal{(\roman*)}]

 \item If $f\in \Fun(\pA)$, then 
  \begin{equation}\label{eq:58}
     \FB(\phi _*f) = \sqrt{\frac{\#\pA}{\#\pB}}\; (\phi
     \dual)^*(\FA f). 
  \end{equation}

 \item If $g\in \Fun(\pB)$, then
  \begin{equation}\label{eq:59}
     \FA(\phi ^*g) = \sqrt{\frac{\#\pA}{\#\pB}}\; (\phi
     \dual)_*(\FB g). 
  \end{equation}

 \item Let 
  \begin{equation}\label{eq:74}
     1\longrightarrow \TT\xrightarrow{\;\;\lambda \;\;}
     \tpL\longrightarrow \pL\longrightarrow 0 
  \end{equation}
be a central extension and
  \begin{equation}\label{eq:75}
     0\longrightarrow \pL\dual\longrightarrow
     \tpL\dual\xrightarrow{\;\;\lambda \dual\;\;}\ZZ\longrightarrow 0 
  \end{equation}
its Pontrjagin dual.  Then the Fourier transform
$\Fun(\tpL)\to\Fun(\tpL\dual)$ restricts to an isomorphism of the vector
space of sections of the line bundle over~$L$ associated to~\eqref{eq:74}
with functions on the $\pL\dual$-torsor $(\lambda \dual)\inv (1)\subset
\tpL\dual$.
 
 \end{enumerate} 
  \end{lemma}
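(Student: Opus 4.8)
The plan is to establish (i) and (ii) by a direct unwinding of the definition~\eqref{eq:21}, and to reduce (iii) to the behaviour of the Fourier transform under the translation action of~$\TT$ on~$\tpL$.

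For (i), I would fix $b\dual\in \pB\dual$ and expand $\FB(\phi _*f)(b\dual)$: writing $\phi _*f$ as a sum over fibers of~$\phi$ and reorganizing the resulting double sum over~$\pB$ as a single sum over~$\pA$ turns it into $\tfrac1{\sqrt{\#\pB}}\sum_{a\in \pA}\overline{\chi _{\pB}(\phi (a),b\dual)}\,f(a)$. The only nonformal input is the defining property $\chi _{\pB}(\phi (a),b\dual)=\chi _{\pA}(a,\phi \dual b\dual)$ of the dual homomorphism; substituting it and reinstating the normalization yields exactly $\sqrt{\#\pA/\#\pB}\,(\phi \dual)^*(\FA f)(b\dual)$. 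For (ii) I would either repeat this computation with $\phi ^*$ in place of $\phi _*$, or deduce it from (i) applied to $\phi \dual\:\pB\dual\to\pA\dual$, using $(\phi \dual)\dual=\phi$, $\#\pB\dual=\#\pB$, and the Fourier inversion identity $\mathcal F_{\pA\dual}\circ\FA=(f\mapsto f(-\,\cdot\,))$ to cancel the iterated transforms. Neither part offers real resistance.

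Part (iii) is the substantive one. First I would Pontrjagin-dualize~\eqref{eq:74}: since $\TT$ is injective among locally compact abelian groups, $\Hom(-,\TT)$ is exact and carries~\eqref{eq:74} to the short exact sequence~\eqref{eq:75}, in which $\pL\dual\hookrightarrow \tpL\dual$ is the subgroup of characters trivial on~$\lambda (\TT)$ and $\lambda \dual$ sends a character of~$\tpL$ to its weight on~$\TT$; surjectivity of $\lambda \dual$ is injectivity of~$\TT$, so $(\lambda \dual)\inv (1)$ is a nonempty $\pL\dual$-torsor. Next I would decompose $\Fun(\tpL)$ (read as $L^2$, equivalently spanned by the characters of the compact group~$\tpL$) under the translation action of $\lambda (\TT)\subset \tpL$ into weight spaces $V_n=\{F:F(\tilde \ell z)=z^n F(\tilde \ell)\text{ for all }z\in \TT\}$; a character~$\psi$ of~$\tpL$ lies in $V_n$ precisely when $\lambda \dual(\psi)=n$, so $V_n$ is spanned by the characters in the fiber $(\lambda \dual)\inv (n)$ and has dimension $\#\pL$. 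Here $V_0=\Fun(\pL)$ (functions pulled back along $\tpL\to\pL$) and $V_1$ is the space of sections of the line bundle $\tpL\times_{\TT}\CC$ over~$\pL$ associated to~\eqref{eq:74} (up to the usual sign conventions relating associated bundles and~$\lambda \dual$). Finally I would observe that the Fourier transform is diagonal in the character basis --- it sends a character~$\psi$ of~$\tpL$ to (a multiple of) the delta function at $\psi\in \tpL\dual$ --- so it carries $V_n$ onto the span of the delta functions at the points of $(\lambda \dual)\inv (n)$, i.e.\ isomorphically onto $\Fun\bigl((\lambda \dual)\inv (n)\bigr)$; the case $n=1$ is the assertion, and Fourier inversion (integral kernel~$\chi$) gives the inverse explicitly. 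An alternative, machinery-free route to the same point is to substitute $\tilde \ell\mapsto \tilde \ell z$ in $\widehat F(\psi)=\int_{\tpL}\overline{\psi}\,F$ for $F\in V_n$, obtaining $\widehat F(\psi)=z^{\,n-\lambda \dual(\psi)}\widehat F(\psi)$ for all $z\in \TT$ and hence that $\widehat F$ is supported on $(\lambda \dual)\inv (n)$.

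The one place needing care --- and the ``main obstacle'', such as it is --- is in (iii): matching the $\TT$-equivariance of the Fourier transform to the dual sequence~\eqref{eq:75}, and keeping track that $\tpL$ is compact rather than finite, so $\Fun(\tpL)$ and the transform are to be read in the $L^2$/character-expansion sense. This is benign, since every weight space $V_n$ and every fiber $(\lambda \dual)\inv (n)$ in play is finite of cardinality $\#\pL=\#\pL\dual$, so the statement lives entirely in finite dimensions and no analytic subtlety persists.
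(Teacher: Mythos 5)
The paper states this lemma explicitly \emph{without proof} (``As a preliminary we state without proof properties of the finite Fourier transform''), so there is no in-text argument to compare yours against; judged on its own, your proof is correct and complete. Parts (i) and (ii) are exactly the standard adjunction of $\phi_*$ and $(\phi\dual)^*$ against the kernel $\overline{\chi}$, hinging only on $\chi\mstrut_{\pB}(\phi(a),b\dual)=\chi\mstrut_{\pA}(a,\phi\dual b\dual)$, and your derivation of (ii) from (i) via Fourier inversion is clean (one can check it by applying $\sF_{\pA\dual}$ to both sides of~\eqref{eq:59}). For (iii), the weight-space decomposition of $\Fun(\tpL)$ under translation by $\lambda(\TT)$, together with the observation that the transform carries the character $\psi$ to a delta function supported at $\psi$ and hence carries $V_n$ onto $\Fun\bigl((\lambda\dual)\inv(n)\bigr)$, is the right mechanism; your equivariance computation $\widehat F(\psi)=z^{\,n-\lambda\dual(\psi)}\widehat F(\psi)$ plus the dimension count $\dim V_n=\#(\lambda\dual)\inv(n)=\#\pL$ settles it. The two points you flag --- the $V_1$ versus $V_{-1}$ sign convention for sections of the associated line bundle, and reading $\Fun(\tpL)$ in the character-expansion sense for the compact group $\tpL$ --- are genuinely the only places requiring care, and both are harmless for the reasons you give.
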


Suppose now the homomorphisms 
  \begin{equation}\label{eq:145}
     \begin{gathered} \xymatrix{M'\;\ar@{^{(}->}[r]^i & M\ar@{->>}[r]^(.4)\pi
     &M''} 
     \end{gathered} 
  \end{equation}
of finite abelian groups satisfy~$\pi \circ i=0$.  Let $K=\Ker\pi $ and
$H=\Ker\pi /\Image i$.  The Pontrjagin dual to~\eqref{eq:145} is 
  \begin{equation}\label{eq:146}
     \begin{gathered} \xymatrix{{M'}\dual & M\dual \ar@{->>}[l]_(.4){i\dual} &
     {M''}\dual \ar@{_{(}->}[l]_(.45){\pi \dual} } \end{gathered} 
  \end{equation}
Set $\tK=\Ker i\dual$ and $\tH=\Ker i \dual/\Image \pi \dual$.  ($\tH$~and
$H$~are in Pontrjagin duality.)  Consider the diagrams
  \begin{equation}\label{eq:147}
     \begin{gathered} \xymatrix{K\ar@{^{(}->}[r]^j \ar@{->>}[d]_(.43)p &
     M\\H}\qquad \qquad \qquad \xymatrix{\tK\ar@{^{(}->}[r]^{\tj}
     \ar@{->>}[d]_(.4){\tpp} & M\dual\\\tH} \end{gathered} 
  \end{equation}
and suppose $\Theta \in \Fun(M)$. 

  \begin{lemma}[]\label{thm:61}
 We have the following equality of Fourier transforms:
  \begin{equation}\label{eq:148}
     \sF_H(p_*\,j^*\Theta ) = \frac{\#K}{\sqrt{\#M\cdot \#H}}\;
     \tpp_*\,\tj^*\,\sF_M(\Theta ). 
  \end{equation}
  \end{lemma}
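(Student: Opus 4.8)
The plan is to reduce the identity to the two transform formulas of Lemma~\ref{thm:27}, together with a base-change identity for a cartesian square of finite sets.

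First I would split $p_*j^*$ into its two steps. Applying Lemma~\ref{thm:27}(ii) to the injection $j\colon K\hookrightarrow M$ gives $\sF_K(j^*\Theta)=\sqrt{\#K/\#M}\,(j\dual)_*\,\sF_M(\Theta)\in\Fun(K\dual)$, where $j\dual\colon M\dual\to K\dual$ is restriction of characters. Then applying Lemma~\ref{thm:27}(i) to the quotient $p\colon K\twoheadrightarrow H$ gives
\[
  \sF_H(p_*j^*\Theta)=\sqrt{\frac{\#K}{\#H}}\,(p\dual)^*\,\sF_K(j^*\Theta)
  =\frac{\#K}{\sqrt{\#M\cdot\#H}}\,(p\dual)^*(j\dual)_*\,\sF_M(\Theta).
\]
The scalar already matches \eqref{eq:148}, so it remains to establish the operator identity $(p\dual)^*(j\dual)_*=\tpp_*\,\tj^*$ as maps $\Fun(M\dual)\to\Fun(\tH)$, using the identification $\tH\cong H\dual$ under which both sides have the stated target.

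The key step is to recognize the square with edges $\tj\colon\tK\hookrightarrow M\dual$ and $j\dual\colon M\dual\to K\dual$ on one side, $\tpp\colon\tK\twoheadrightarrow\tH$ and $p\dual\colon\tH=H\dual\hookrightarrow K\dual$ on the other, as a \emph{pullback} square of finite sets, and then invoke the standard fact that for a cartesian square fibrewise summation followed by restriction equals restriction followed by fibrewise summation; that is precisely the asserted identity of operators. To verify the square: $j\dual$ is surjective (as $j$ is injective) and $\Image(p\dual)=(\Image i)^{\perp}\subset K\dual$ (the characters of $K$ trivial on $\Image i$), where the isomorphism $\tK/\Image\pi\dual\xrightarrow{\ \sim\ }H\dual$ used to identify $\tH$ with $H\dual$ sends $\chi\in\tK$ --- automatically trivial on $\Image i$ since $\pi\circ i=0$ --- to the character $\chi|_K$ of $H=K/\Image i$; its kernel is the set of characters of $M$ trivial on $K=\Ker\pi$, which is exactly $\Image\pi\dual$, and the orders $\#\tK=\#M/\#M'$, $\#\Image\pi\dual=\#M''$, $\#H\dual=\#H=\#K/\#M'$ show this map is an isomorphism. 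Commutativity of the square then reduces to $\chi|_K=(p\dual\circ\tpp)(\chi)$, immediate from this description, and cartesianness follows by a cardinality count: the evident comparison map $\tK\to\tH\times_{K\dual}M\dual$ is injective because the $M\dual$-coordinate recovers $\chi$, and since $p\dual$ is injective the fibre product has order $\#(j\dual)^{-1}\bigl(\Image p\dual\bigr)=\#\Ker(j\dual)\cdot\#H=(\#M/\#K)(\#K/\#M')=\#M/\#M'=\#\tK$, so the comparison map is a bijection.

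The main obstacle is exactly this last verification: one must keep careful track of which subgroup of $K\dual$ is the image of each of $j\dual$ and $p\dual$, and of how characters are identified under $\tH\cong H\dual$, so that the square genuinely is a pullback rather than merely a commuting square. Once the square is set up correctly the base-change identity is automatic, and substituting $(p\dual)^*(j\dual)_*=\tpp_*\,\tj^*$ into the displayed formula yields \eqref{eq:148}.
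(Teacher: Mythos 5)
Your proof is correct and follows essentially the same route as the paper's: the paper's entire proof consists of invoking Lemma~\ref{thm:27}(i),(ii) and the pullback square~\eqref{eq:149}, which is exactly your decomposition $(p\dual)^*(j\dual)_*=\tpp_*\,\tj^*$ via base change. You have merely filled in the details the paper leaves implicit, namely the identification $\tH\cong H\dual$ and the verification that the square is cartesian.
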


  \begin{proof}
 Apply Lemma~\ref{thm:27}(i),(ii) and use the pullback square 
  \begin{equation}\label{eq:149}
     \begin{gathered} \xymatrix{\tK\ar[r]^{\tj} \ar[d]_(.45){\tpp} &
     M\dual\ar[d]^(.45){j\dual} \\ \tH\ar[r]^(.37){p\dual} & M\dual/\Image\pi
     \dual} 
     \end{gathered} 
  \end{equation}
  \vskip-2.8em
  \end{proof}

Fix $\eta \in M''$ and $\omega \in {M'}\dual$.  Then the $K$-torsor $K_\eta
=\pi \inv (\eta )$ and $\tK$-torsor $\tK_\omega =(i\dual)\inv (\omega )$ fit
into the diagrams 
  \begin{equation}\label{eq:150}
     \begin{gathered} \xymatrix{K_\eta \ar@{^{(}->}[r]^{j_\eta }
     \ar@{->>}[d]_(.43)p & 
     M\\H_\eta }\qquad \qquad \qquad \xymatrix{\tK_\omega
     \ar@{^{(}->}[r]^{\tj_\omega } 
     \ar@{->>}[d]_(.4){\tpp} & M\dual\\\tH_\omega } \end{gathered} 
  \end{equation}
for the $H$-torsor $H_\eta =K_\eta /\Image i$ and $\tH$-torsor $\tH_\omega
=\tK_\omega /\Image\pi \dual$.  Use the character~$\omega \:M'\to\CC$ to
define twisted descent $p^\omega _*\:\Fun(K_\eta )\to \Fun_\omega (H_\eta )$
from $\Fun(K_\eta )$y to the vector space
  \begin{equation}\label{eq:151}
     p^*\!\Fun_\omega (H_\eta ) = \left\{ g\:K_\eta \to\CC : g\bigl(k+i(m') \bigr)
     = \omega (m')\inv g(k) \textnormal{ for all $m'\in M'$}\right\} 
  \end{equation}
of sections of the line bundle over~$H_\eta $ determined by~$\omega $: the
value of~$p_*^\omega $ on $f\in \Fun(K_\eta )$ is determined by the formula
  \begin{equation}\label{eq:154}
     p^*p^\omega _*f(k) = \sum\limits_{m'\in M'} \omega
     (m')f\bigl(k+i(m')\bigr),\qquad k\in K_\eta . 
  \end{equation}
There is a similar map~$\tpp_*^\eta $ which interchanges the roles of~$\eta $
and~$\omega $.  Lemma~\ref{thm:27}(iii) tells that the Fourier
transform~$\sF_M$ maps the subspace~$(j_\eta )_*p^*\Fun_\omega (H_\eta )$
of~$\Fun(M)$ to the corresponding subspace $(\tj_\omega )_*\tpp^*\Fun_\eta
(\tH_\omega )$ of~$\Fun(M\dual)$.  There is no Fourier transform defined
\emph{a priori} on~$\Fun_\omega (H_\eta )$, so we define it in terms
of~$\sF_M$, inserting the appropriate factors in Lemma~\ref{thm:61}, and
conclude 
  \begin{equation}\label{eq:152}
     \sF_{H_\eta }(p_*^\omega \,j^*_\eta \Theta ) = \frac{\#K}{\sqrt{\#M\cdot \#H}}\;
     \tpp_*^\eta \,\tj^*_\omega \,\sF_M(\Theta ). 
  \end{equation}

 Let $(Y,\Lambda )$ be a closed latticed surface.  Let $A$~be a finite
abelian group.  As in~\eqref{eq:134} we fix data for order and disorder
operators, which we take to be \emph{irreducible}:\footnote{Assume the $v_i$
are distinct and the~$f_j$ lie in distinct faces.}
  \begin{equation}\label{eq:136}
     \begin{aligned} v_1,\dots ,v_L&\in \Vertices(\Lambda ),\qquad &&\omega
      _1,\dots ,\omega _L\in A\dual; \\ f_1,\dots ,f_M&\in Y\setminus \Lambda
      ,\qquad &&\eta _1,\dots ,\eta _M\in A.\end{aligned} 
  \end{equation}
Let $\Lambda \dual$ be a dual lattice (Definition~\ref{thm:14}), chosen so
that $f_1,\dots ,f_M$ are vertices of~$\Lambda \dual$.  The cochain complexes 
  \begin{equation}\label{eq:137}
     \begin{gathered} \xymatrix@R-1.3em{C^0(\Lambda ;A)\ar[r]^{d^0}& C^1(\Lambda
     ;A)\ar[r]^{d^1}& C^2(\Lambda ;A) \\ C^2(\Lambda \dual;A\dual)&
     C^1(\Lambda \dual;A\dual)\ar[l]_(.45){\partial ^1} & C^0(\Lambda
     \dual;A\dual)\ar[l]_(.45){\partial ^0}} \end{gathered} 
  \end{equation}
are Pontrjagin dual.  The order data determines a character $\omega
\:C^0(\Lambda ;A)\to \CC$ and the disorder data an element $\eta \in
C^2(\Lambda ;A)$.   
 
The stack~$\Bun_A(\Lambda )$ has a small model, the action groupoid of
$C^0(\Lambda ;A)$ acting on~$C^1\GA$ by translation via~$d^0$.  (An element
of~$C^1\GA$ defines $Q\to \Lambda $ trivialized over~$\Vertices(\Lambda )$.)
The disorder operator acts by restriction to the subgroupoid ~$C^0\GA$ acting
on the $Z^1\GA$-torsor $(d^1)\inv (\delta _\eta )$,
where as usual $Z^1\GA=\ker d^1$.  Fix an admissible $\theta \in \Fun(A)$ and
let $\Theta \in \Fun\bigl(C^1\GA \bigr)$ be 
  \begin{equation}\label{eq:139}
     \Theta (c)= \prod\limits_{e\in \Edges(\Lambda )}\theta
     \bigl(c(e) \bigr),\qquad c\in C^1\GA. 
  \end{equation}
Then the Ising partition function~\eqref{eq:134} is 
  \begin{equation}\label{eq:140}
     \rI (Y,\Lambda )(z) = \sum\limits_{s\in C^0\GA}\omega (s)\cdot \Theta
     (z+d^0s),\qquad 
     z\in (d^1)\inv (\delta _\eta ).
  \end{equation}
We apply Lemma~\ref{thm:61} to compute its Fourier transform.  Take
$M=C^1\GA$, $M'=B^1\GA$, and\footnote{The disorder data~$\eta $ must be a
boundary.} $M''=B^2\GA$, where as usual $B^{\bullet }$~denotes the subgroup
of boundaries.  Then $K=Z^1\GA$ and $H=H^1(Y;A) \cong \pi _0\Bun_A(Y)$.
First, suppose $\eta =0$ and~$\omega =0$: no order or disorder operators.
Then \eqref{eq:140}~reduces to $\rI (Y,\Lambda )=\#H^0(Y;A)\cdot
p_*\,j^*\,\Theta $ and \eqref{eq:148} immediately implies the following,
which is part of Theorem~\ref{thm:25}.

  \begin{proposition}[]\label{thm:62}
 The Fourier transform $\Fun\bigl(\pi _0\Bun_A(Y)\bigr)\to \Fun\bigl(\pi
_0\Bun_{A\dual}(Y) \bigr)$ maps the Ising partition function $\rI _{(A,\theta
)}(Y,\Lambda )$ to the numerical factor
  \begin{equation}\label{eq:153}
     \frac{\# Z^1\GA}{\sqrt{\#C^1\GA\cdot \#H^1(Y;A)}} 
  \end{equation}
times the Ising partition function $\rI _{(A\dual,\theta \dual)}(Y,\Lambda
\dual)$.
  \end{proposition}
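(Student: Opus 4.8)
The plan is to deduce the identity from Lemma~\ref{thm:61}, after rewriting both Ising partition functions and the Fourier transform of the edge weight $\Theta$ at the cochain level. First I would specialize formula~\eqref{eq:140} to the case $\eta=0$, $\omega=0$ of no order or disorder operators: then $(d^1)\inv(\delta _0)=Z^1\GA$, and the sum over $s\in C^0\GA$ overcounts each translate by the orbit of $\ker d^0\cong H^0(Y;A)$, so that $\rI_{(A,\theta )}(Y,\Lambda )=\#H^0(Y;A)\cdot p_*\,j^*\,\Theta $ as a function on $H=H^1(Y;A)\cong\pi _0\Bun_A(Y)$ — the identity already recorded in the text just above the Proposition. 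The same identity holds verbatim for the dual lattice, with $\Theta $ replaced by $\Theta \dual\in \Fun\bigl(C^1\GAd\bigr)$, $\Theta \dual(c)=\prod_{e\dual}\theta \dual\bigl(c(e\dual)\bigr)$.

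Next I would apply Lemma~\ref{thm:61} with $M=C^1\GA$, $M'=B^1\GA$, $M''=B^2\GA$, so $K=Z^1\GA$ and $H=H^1(Y;A)$, obtaining
\[
  \sF_H(p_*\,j^*\Theta )=\frac{\#Z^1\GA}{\sqrt{\#C^1\GA\cdot \#H^1(Y;A)}}\;\tpp_*\,\tj^*\,\sF_M(\Theta ).
\]
The remaining work is to identify the right-hand side with the dual partition function. The first identification, $\sF_M(\Theta )=\Theta \dual$, is elementary: since $C^1\GA=\bigoplus_{e\in \Edges(\Lambda )}A$ and $\Theta =\bigotimes_e\theta $, the transform $\sF_M$ factors as $\bigotimes_e\sF_A$ — its normalizing constant $1/\sqrt{\#C^1\GA}$ likewise factoring as $\prod_e 1/\sqrt{\#A}$ — so $\sF_M(\Theta )=\bigotimes_e\theta \dual$, which under the edge bijection $\Edges(\Lambda )\xrightarrow{\;\sim\;}\Edges(\Lambda \dual)$ of Definition~\ref{thm:14} is $\Theta \dual$. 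Evenness of $\theta $ makes this independent of edge orientations; when $A\ne\bmut$, an orientation of $Y$ enters through the pairing realizing the duality of the two complexes in~\eqref{eq:137}.

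The second identification is that $\tK$, $\tH$, $\tj$, $\tpp$ are precisely the dual-lattice data. Pontrjagin-dualizing the sequence $B^1\GA\hookrightarrow C^1\GA\twoheadrightarrow B^2\GA$ and using that the two complexes in~\eqref{eq:137} are Pontrjagin dual — so $(C^1\GA)\dual=C^1\GAd$, $(d^0)\dual=\partial ^1$, $(d^1)\dual=\partial ^0$ — one gets $\tK=(\Image d^0)^{\perp}=\ker\partial ^1=Z^1\GAd$, while $\Image(\pi \dual)=(\ker d^1)^{\perp}=\Image\partial ^0=B^1\GAd$, whence $\tH=Z^1\GAd/B^1\GAd=H^1(Y;A\dual)$, with $\tj$ the inclusion and $\tpp$ the quotient. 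Substituting these identifications together with the dual-lattice form of the first step, namely $\tpp_*\,\tj^*\Theta \dual=\frac{1}{\#H^0(Y;A\dual)}\rI_{(A\dual,\theta \dual)}(Y,\Lambda \dual)$, and multiplying through by the factor $\#H^0(Y;A)$ from the first step, yields
\[
  \sF_H\bigl(\rI_{(A,\theta )}(Y,\Lambda )\bigr)=\frac{\#H^0(Y;A)}{\#H^0(Y;A\dual)}\cdot \frac{\#Z^1\GA}{\sqrt{\#C^1\GA\cdot \#H^1(Y;A)}}\;\rI_{(A\dual,\theta \dual)}(Y,\Lambda \dual);
\]
since $H^0(Y;A)$ and $H^0(Y;A\dual)$ are Pontrjagin dual finite groups their orders agree, the leading ratio is $1$, and we arrive at~\eqref{eq:153}.

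I expect the main obstacle to be the second identification: confirming that the abstract kernel/cokernel data $\tK,\tH,\tj,\tpp$ handed back by Lemma~\ref{thm:61} from the sequence attached to $\Lambda $ coincide, under Poincar\'e duality between the complexes in~\eqref{eq:137}, with the genuine cochain-level objects $Z^1,B^1,H^1$ of $\Lambda \dual$. Everything else — the collapse of~\eqref{eq:140}, the tensor factorization and normalization in the first identification, and the cancellation of the $H^0$ factors — is routine bookkeeping. One should also not be alarmed that the constant in~\eqref{eq:153} is asymmetric under $\Lambda \leftrightarrow\Lambda \dual$: this merely reflects the differing overall normalizations of the two lattice models (cf.\ the scaling remark in~\S\ref{fourier}) and is the residue of the mismatch diagnosed in~\S\ref{failure}.
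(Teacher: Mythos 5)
Your proposal is correct and follows the paper's own route: the paper likewise specializes~\eqref{eq:140} to $\eta=\omega=0$ to get $\rI(Y,\Lambda)=\#H^0(Y;A)\cdot p_*\,j^*\,\Theta$ with $M=C^1\GA$, $M'=B^1\GA$, $M''=B^2\GA$, and then invokes Lemma~\ref{thm:61}. The details you supply---the factorization $\sF_M(\Theta)=\Theta\dual$ over edges, the identification of $\tK,\tH,\tj,\tpp$ with $Z^1\GAd$, $H^1(Y;A\dual)$ and their inclusion/quotient maps via the Pontrjagin duality of~\eqref{eq:137}, and the cancellation $\#H^0(Y;A)=\#H^0(Y;A\dual)$---are exactly the bookkeeping the paper leaves implicit in the phrase ``\eqref{eq:148} immediately implies.''
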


  \begin{remark}[]\label{thm:63}
 The numerical factor~\eqref{eq:153} equals 
  \begin{equation}\label{eq:155}
     \sqrt{\frac{\#C^0\GA}{\#C^2\GA}}. 
  \end{equation}
Hence if we divide $\rI_{(A,\theta )}(Y,\Lambda )$ by $\sqrt{\#C^0(\Lambda
;A)}$, which is a factor of~$(\#A)^{-1/2}$ for each vertex, then we achieve
precise agreement under Fourier transform.  (The extra factor is a special
case of a generalized Ising action; see Definition~\ref{isingpart}.)
  \end{remark}

If there are order and disorder operators, described by~$\eta $ and~$\omega
$, then the Ising partition function~\eqref{eq:140} is $\rI (Y,\Lambda
)=\#H^0(Y;A)\cdot p_*^\omega \,j^*_\eta \,\Theta $; see ~\eqref{eq:150}
and~\eqref{eq:154} for notation.  Therefore, \eqref{eq:152}~ tells the
equality with the Ising partition function of the dual model, with order
and disorder operators exchanged.

  \begin{remark}[]\label{thm:64}
 We do not prove that electromagnetic duality induces the Fourier transform
defined in \eqref{eq:152}.  Rather, we complete the proof of
Theorem~\ref{thm:25} in~\S\ref{subsec:8.1} in a more general setting. 
  \end{remark}

   \section{Low energy effective topological field theories}\label{sec:5}

In this section we consider qualitative aspects of the lattice theories.  Our
discussion is heuristic and conjectural. 
 
To begin, we recall some general principles about quantum systems.  First,
the low energy behavior of a quantum system is thought to be
well-approximated by a scale-independent relativistic quantum field theory.
In particular, this is applied to lattice systems, in which case the emergent
relativistic invariance is a strong assumption.  Furthermore, there is a
notion of a \emph{gapped} quantum system: the Hamiltonian has a spectral gap
above the lowest energy.  For lattice systems one assumes that this energy
gap is bounded below independent of the lattice.  Then for a gapped theory,
in many cases, the low energy effective field theory is thought to be
topological.\footnote{In general, it should be a topological theory tensored
with an invertible theory; see~\cite[\S5.4]{FH}.  For the gapped lattice
models in this paper we assume that the low energy effective theory is
topological.}  If we consider a moduli stack~$\sM$ of quantum theories with
fixed discrete parameters, then there is a locus~$\Delta \subset \sM$ of
\emph{phase transitions}.  Points in~$\sM\setminus \Delta $ may represent
gapped or ungapped systems; the points in~$\Delta $ labeling
\emph{first-order} phase transitions may also represent gapped systems
whereas those points in~$\Delta $ labeling \emph{higher-order} phase
transitions represent gapless systems.  Path components of~$(\sM\setminus
\Delta )_{\textnormal{gapped}}$ are called \emph{gapped phases}.
Furthermore, the low energy effective field theory associated to a point
of~$(\sM\setminus \Delta )_{\textnormal{gapped}}$ is thought to be a complete
invariant of the gapped phase.  In this section we use the full force of the
global symmetry---the presentation of lattice theories as boundary theories
for fully extended finite gauge theory---to deduce constraints on the low
energy field theory.

  \begin{remark}[]\label{thm:30}
 There is also dynamics, the \emph{renormalization group flow}
on~$(\sM_G\setminus \Delta _G)_{\textnormal{gapped}}$.  Its limit points
should be the possible low energy theories.
  \end{remark}

  \begin{remark}[]\label{thm:31}
 For a fixed finite group~$G$ we take~$\sM_G$ as the space of admissible
functions on~$G$ divided by positive multiplicative rescaling;
see~\S\ref{subsec:4.1}.  Since our lattice theories are a limited class of
nearest neighbor interactions (see~\eqref{eq:42}), we do not expect a naive
renormalization group flow on~$\sM_G$.  To construct a flow we would have to
project from a chimerical moduli space of all lattice theories with
symmetry~$G$ back onto this space.
  \end{remark}

Fix a finite group~$G$ and an admissible function~$\theta $.  Recall the
definition of the Hamiltonian in a lattice model from Remark~\ref{thm:17}.
There is a Hamiltonian associated to every latticed 1-manifold~$(S,\Pi )$
equipped with a principal $G$-bundle $R\to S$.  Choose $S=S^1$ the standard
circle and so $\Pi $~a polygon.  The construction (Definition~\ref{thm:16})
of the lattice model applied to the cylinder $[0,1]\times S^1$ with embedded
``prism'' $[0,1]\times \Pi $ defines an endomorphism of the state space of
functions on configurations on~$\Pi $.  This is the Wick-rotated propagation
through a unit of time, so is $\exp(-H_{(\Pi ,R)})$ for the Hamiltonian
operator\footnote{The Hamiltonian is undefined (infinite) on the kernel of
the evolution operator.}~$H_{(\Pi ,R)}$.  For the trivial $G$-bundle
$\cir\times G\to\cir$ the operator~$H_{\Pi ,R}$ is the usual Hamiltonian, but
there are twisted sectors and so ``twisted Hamiltonians'' for nontrivial
$R\to\cir$.  Full locality permits us to relate different sectors, at least
in principle, by modifying $R\to\cir$ over small intervals in~$\cir$.  Recall
that $\theta \:G\to\RR$ achieves its maximum at the identity element~$e\in
G$.  Assume that $e$~is the unique maximum point of~$\theta $.  It follows
that in the untwisted sector the constant $G$-valued function on vertices
of~$\Pi $ is a minimal energy configuration.  Furthermore, the minimal energy
configuration in a twisted sector has energy dependent on values of~$\theta $
on~$G\setminus \{e\}$, so has strictly larger energy than the minimal energy
in the untwisted sector.  That difference may or may not be bounded away from
zero as we vary~$\Pi $.

  \begin{remark}[]\label{thm:32}
 The strong assumption that the low energy effective field theory be
\emph{fully local} implies that `low energy' must also be fully local in the
following sense.  `Low'~is a global minimum---the minimal energy in the
untwisted sector---and so if a twisted sector has minimal energy uniformly
larger than the global minimum, its states are not present in the low energy
approximating field theory: the field theory has value the zero vector space
in that sector. 
  \end{remark}

Let~$\sM_G$ denote the space of admissible functions on~$G$, up to rescaling.
As above, we assume a locus~$\Delta _G\subset \sM_G$ of lattice systems at
which phase transitions occur, and we restrict to the subset of the
complement representing gapped theories.  Then associated to $\theta \in
(\sM_G\setminus \Delta _G)_{\textnormal{gapped}}$ we expect an effective low
energy \emph{topological} field theory.  Since the lattice
model~$\rI_{(G,\theta )}$ has a global symmetry group~$G$, encoded in strong
form by realizing~$\rI_{(G,\theta )}$ as the boundary theory of 3-dimensional
pure gauge theory~$\FG$, we expect the same for its low energy
approximation~$\rL_{(G,\theta )}$.  Now we are on solid mathematical ground:
$\rL_{(G,\theta )}$ ~is a \emph{topological} boundary theory for an extended
topological field theory.  In~\S\ref{subsec:3.4} we described how a
subgroup~$H\subset G$ gives rise to a topological boundary theory~$\BH$,
and in Remark~\ref{thm:8} we quoted a (straightforward) theorem
\cite[Corollary~7.12.20]{EGNO} in the theory of tensor categories to the
effect that \emph{every}\footnote{This result depends on choosing $\TensCat$
as the target 3-category of~$\FG$.  We have not investigated whether varying
this choice produces additional boundary theories.}  topological boundary
theory which is indecomposable is built in this way from a subgroup $H\subset
G$ and a central extension of~$H$.  We also argued in Remark~\ref{thm:8} that
we do not expect to see nontrivial central extensions.  Furthermore, the
topological theory corresponding to~$H$ (no central extension) is realized
as~$\rI_{(G,\theta _H)}$ for $\theta _H\in \sM_G$ the extreme point at which
$\theta _H$~is the characteristic function of~$H$; see Remark~\ref{thm:22}.
Given these facts, it seems reasonable to propose that neither nontrivial
central extensions nor decomposable boundary theories occur.

  \begin{conjecture}[]\label{thm:33}
 The low energy field theory~$\rL_{(G,\theta )}$, $\theta \in (\sM_G\setminus
\Delta _G)_{\textnormal{gapped}}$, is~$\BH$ for some subgroup~$H\subset G$. 
  \end{conjecture}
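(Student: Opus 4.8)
The plan is to derive the conjecture from three ingredients: (i) that the infrared limit of $\rI_{(G,\theta )}$ is a \emph{topological} boundary theory for the fully extended gauge theory $\FG$; (ii) the classification of topological boundary theories for $\FG$ by module categories over $\cVect[G]$, by which every indecomposable such theory is $\BH$ up to a twist by a central extension of $H$ (Remark~\ref{thm:8}, \cite[Corollary~7.12.20]{EGNO}); and (iii) a positivity argument, using the nonnegativity of $\theta $, that eliminates the twist and forces indecomposability. Granting (i), the conjecture reduces to the assertion that the module category underlying $\rL_{(G,\theta )}$ is the untwisted orbit category $\cVect(G/H)$ for a single subgroup $H\subset G$.

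For (i), recall from Remark~\ref{thm:17} that on a latticed circle $(\cir ,\Pi )$ with a principal $G$-bundle $R\to \cir $ the theory $\rI_{(G,\theta )}$ produces the transfer matrix $\exp(-H_{(\Pi ,R)})$ on $\Fun\bigl(\conf R\Pi \bigr)$. I would define the state space of $\rL_{(G,\theta )}$ in the sector $R$ to be the top-eigenvalue subspace of this operator in the thermodynamic limit, on which, after rescaling, the transfer matrix becomes a projector. Because the coupling to $\FG$ is implemented sector by sector through $R$ and bordisms act by the maps of Definition~\ref{thm:16}, this family of limiting spaces and maps retains the structure of a $2$-dimensional theory relative to $\FG$; assuming emergent relativistic invariance, it is topological. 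The genuinely nonrigorous step is the claim that this limit is again \emph{fully extended}, i.e.\ determined by an object of $\cVect[G]$-mod; this is a locality hypothesis on the infrared limit rather than a theorem, and it is the main obstacle.

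For (iii), by Remark~\ref{thm:8} an indecomposable topological boundary theory for $\FG$ is a weighted count of reductions of a $G$-bundle to an $H$-bundle, with weight governed by a class in $H^2(B\widetilde H;\TT)$ of a central extension $1\to \TT\to \widetilde H\to H\to 1$. The lattice approximants are manifestly positive: the integral kernel $K(s)=\prod_{e}\theta \bigl(g(s;e)\bigr)$ of \eqref{eq:42} is nonnegative, so every transfer matrix and every closed-surface partition function of $\rI_{(G,\theta )}$ is computed by a matrix with nonnegative entries in the basis of $\delta $-functions on configurations. I would argue that this entrywise nonnegativity --- a discrete reflection positivity --- passes to the rescaled infrared limit, whereas the discrete-torsion phases attached to a nontrivial $\widetilde H$ render the partition function of the twisted boundary theory on suitable genus-one mapping tori genuinely signed or complex. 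Since there is no central extension of a finite group with center $\RR^{>0}$, the twist must be trivial. Decomposability is excluded by the same positivity together with uniqueness of the equilibrium state in the interior of a gapped phase: a direct sum of several $\BH$'s (with possibly distinct $H$'s and twists) would be a superposition of distinct symmetry-breaking patterns, the signature of a first-order transition locus in $\Delta _G$ rather than of a point of $(\sM _G\setminus \Delta _G)_{\textnormal{gapped}}$; rigorously one shows that the trivial-sector ground-state space on $\cir $ carries a transitive $G$-action, as is classical for the low-temperature Ising model, which pins down a single $\BH$.

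Assembling these, $\rL_{(G,\theta )}$ is an indecomposable topological boundary theory for $\FG$, hence the twisted theory associated to some $(H,\widetilde H)$; positivity forces $\widetilde H\cong H\times \TT$ to be split; hence $\rL_{(G,\theta )}=\BH$. That the proposed answer is at least attained is Remark~\ref{thm:22}: at the extreme point $\theta _H\in \sM _G$ the transfer matrix is already a scaled projector onto $\Fun(G/H)$ and $\rI_{(G,\theta _H)}$ equals $\BH$ (up to the vertex normalization recorded there). I expect the two serious gaps to remain the ones flagged at the opening of this section --- that the infrared limit of a gapped lattice model is a fully extended topological theory, and the quantitative form of the positivity that rules out discrete torsion --- neither of which is internal to topological field theory; the contribution of the present framework is precisely to reduce the conjecture to these two statements.
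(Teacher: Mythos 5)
The statement is a conjecture, and the paper offers only the heuristic discussion of \S\ref{sec:5} rather than a proof; your proposal follows essentially the same route --- assume the gapped infrared limit is a fully extended topological boundary theory for~$\FG$, invoke the classification of indecomposable topological boundary theories by subgroups with central extensions (Remark~\ref{thm:8}, \cite[Corollary~7.12.20]{EGNO}), and use positivity of~$\theta $ together with the absence of central extensions with center~$\RR^{>0}$ to exclude the twist. You correctly flag the same two gaps the paper leaves open (full locality of the infrared limit, and the quantitative positivity/indecomposability argument), so your writeup is a faithful elaboration of the paper's own reasoning rather than an independent proof, and no more is claimed by the authors either.
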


  \begin{remark}[]\label{thm:34}
 As explained in \S\ref{subsec:3.4} the boundary theory~$\BH$ implements
symmetry breaking from~$G$ to~$H$.  Therefore, Conjecture~\ref{thm:33}
asserts that gapped phases of Ising lattice models are distinguished by
symmetry breaking, which realizes Landau's vision\footnote{Landau proposed
classifying phases by patterns of symmetry breaking.  In many examples it is
now understood that symmetry breaking is not sufficient; see~\cite{We} for an
early articulation.} for this class of models.
  \end{remark}

  \begin{remark}[]
 Conjecture~\ref{thm:33} predicts that the renormalization group flow
(Remark~\ref{thm:30}) has limit points~$\theta _H\in \sM_G$ corresponding to
subgroups~$H\subset G$. 
  \end{remark}

In the remainder of this section we describe explicitly the special case of
the classical Ising model~$G=\bmut$.  An admissible function $\theta
\:\bmut\to\RR$ can be normalized so $\theta (1)=1$; then $\theta (-1)=a$ with
$0\le a\le1$.  Thus $\sM_{\bmut}\cong [0,1]$.  The extreme point~$a=0$
corresponds to the trivial subgroup~$1\subset \bmut$; the extreme point~$a=1$
to the full subgroup~$\bmut\subset \bmut$.  The usual parameter for the
classical Ising model is the inverse temperature~$\beta $, which is related
to~$a$ by $a=e^{-2\beta }$.  The locus~$\Delta _{\bmut}$ of phase transitions
consists of a single point $a_\Delta =\sqrt2-1$, or equivalently
$\sinh(2\beta )=1$.  The Fourier transform induces the involution
$a\leftrightarrow (1-a)/(1+a)$ on~$\sM_{\bmut}$, after uniquely identifying
the abelian groups $\bmut\dual\cong \bmut$; the fixed point locus is the
single point~$\Delta _{\bmut}=\{a_{\Delta }\}$.  The low temperature phase is
$0\le a < a_\Delta $.  The high temperature phase is $a_\Delta <a\le1$.  The
expectation is that $a=a_\Delta $ is a source for the renormalization group
flow, which limits to sinks at~$a=0$ and~$a=1$ in the low and high
temperature phases, respectively.  This phase diagram, depicted in
Figure~\ref{fig:1}, is standard in the physics literature,
e.g.,~\cite[Figure~3.7]{C}.

  \begin{figure}[ht]
  \centering
  \includegraphics[scale=1]{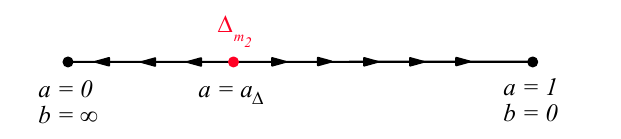}
  \caption{Flow on moduli space of one-dimensional Ising models}\label{fig:1}
  \end{figure}

Recall the topological boundary theories~\eqref{eq:11}.  The low energy
effective boundary field theory is~$\rB _0$ for the low temperature phase
and $\rB _{\bmut}$ for the high temperature phase.  We worked out the values
on~$\cir$ in Example~\ref{thm:6}.  Recall that $\rB (\cir)$~is an object in
the category~$\rG_{\bmut}(\cir)$ of $\bmut$-equivariant vector bundles over~$\bmut$,
so a pair~$V_1,V_{-1}$ of representations of~$\bmut$.  The underlying vector
spaces are the vacuum states of the Ising theory in the untwisted~($V_1$) and
twisted~($V_{-1}$) sectors; the $\bmut$-action is that of the global symmetry
which reverses all of the classical spins simultaneously.
 
For the low temperature phase the low energy field theory is~$\rB =\rB _0$.
The untwisted space ~$V_1$ is the regular representation of~$\bmut$, which is
realized as the space of functions on a set of two points which are exchanged
by the $\bmut$-action.  For each polygon~$\Pi \subset \cir$ the configuration
set of the Ising model is the set of functions $s \:\Vertices(\Pi) \to\bmut$;
the two-element vacuum set is the subset of constant functions, exchanged by
the global symmetry.  The two-dimensional vector space~$V_1$ is the lowest
eigenspace of the untwisted Hamiltonian.  The twisted space~$V_{-1}$ is the zero
vector space.  In the twisted sector there are many classical
configurations~$s$ of small energy---after trivializing the nontrivial double
cover $R\to\cir$ on~$\cir\setminus \{p\}$ for~$p\in \cir\setminus
\Vertices(\Pi )$ they each have a single vertex in~$\Pi $ at which $s$~takes
a value distinct from that at the other vertices.  The lowest energy quantum
state is the equally weighted superposition; the indicator function of this
subset of configurations.  Its energy is strictly larger than the minimal
energy in the untwisted sector.  At low temperature the energy differences
persist\footnote{We do not attempt a quantitative statement or proof here.}
independent of~$\Pi $, which leads to the vanishing of~$V_{-1}$.
 
By contrast, in the high temperature phase the energy differences do not
persist as the lattice~$\Pi $ is refined; note they are completely absent
at~$a=1$.  The low energy theory~$\rB =\rB _{\bmut}$ has $V_1,V_{-1}$~each the
trivial one-dimensional representation of~$\bmut$; the symmetry is unbroken in
each sector.  The vacuum state of the quantum theory on~$(\cir,\Pi )$ is an
equal mixture of all classical configurations $s\:\Vertices(\Pi )\to\bmut$, the
constant function on the configuration set.  This is true in both
sectors---at high temperature the global twisting of the double cover does
not affect the behavior of the Hamiltonian on~$\cir$.

   \section{The regular topological boundary theory}\label{sec:9}

In this section we illustrate computations in an extended field theory with
boundary conditions and domain walls.  We use some of these results
in~\S\ref{sec:6}.  We begin in~\S\ref{subsec:9.1} with a review of oriented
2-dimensional extended field theories.  Then in~\S\ref{subsec:9.2} we
consider a 3-dimensional theory based on a spherical fusion category,
emphasizing the utility of the regular (Dirichlet) boundary theory
(Remark~\ref{thm:2}).

  \subsection{2-dimensional extended field theories}\label{subsec:9.1}

We summarize some aspects of~\cite{MS}; see also~\cite{La}
and~\cite[\S4.2]{L}.

Let $C$~be a semisimple linear category with finitely many simple objects.  A
\emph{Frobenius structure} (or \emph{Calabi-Yau structure}) on~$C$ is a
collection of nondegenerate\footnote{The associated bilinear pairing
$f_1,f_2\mapsto \tau _x(f_1f_2)$ on~$\EC x$ is nondegenerate.} traces
  \begin{equation}\label{eq:113}
     \tau _x\:\EC x\longrightarrow \CC,\qquad x\in C, 
  \end{equation}
which for every sequence 
  \begin{equation}\label{eq:114}
     x_0\xrightarrow{\;\;f_0\;\;} x_1\xrightarrow{\;\;f_1\;\;}
     x_2\longrightarrow \;\cdots\; \xrightarrow{\;\;f_{k-1}\;\;}
     x_k\xrightarrow{\;\;f_k\;\;} x_0 
  \end{equation}
of morphisms in~$C$ satisfy: $\tau _{x_i}(f_{i+k}\cdots f_{i+1}f_i)$ is
independent of $i=0,\dots ,k$.  (Set $f_{i+k+1}=f_i$ for $i=0,\dots ,k-1$.)
Equivalent data is a nondegenerate trace $\tau \:HH_0(C)\to\CC$ on the
Hochschild homology.  Let $x_1,\dots ,x_N$ be a representative set of simple
objects.  The data of~$\tau $ amounts to $N$~complex numbers
  \begin{equation}\label{eq:115}
     \lambda _i=\tau _{x_i}(\id_{x_i}),\qquad i=1,\dots ,N. 
  \end{equation}
The finite semisimple category~$C$ is 2-dualizable in the
2-category~$\Cat=\Cat_{\CC}$ of complex linear categories, and a Frobenius
structure is precisely $SO_2$-invariance data.  By the cobordism hypothesis
there is an associated 2-dimensional topological field theory 
  \begin{equation}\label{eq:116}
     \TC\:\Bord_2(SO_2)\longrightarrow \Cat 
  \end{equation}
of oriented manifolds with $\TC(\pt)=C$.  An object~$x\in C$ is the image of
$\CC\in \Vect$ under a morphism $\Vect\to C$ in~$\Cat$.  Apply the cobordism
hypothesis to construct a boundary theory\footnote{Since the group~$SO_1$ is
trivial we obtain a boundary theory of oriented manifolds.} $\rB_x\:1\to\TC$.
The pair~$(\TC,\rB_x)$ maps the bordism~(i) in Figure~\ref{fig:6} to the
object~$x\in C$.

  \begin{figure}[ht]
  \centering
  \includegraphics[scale=1]{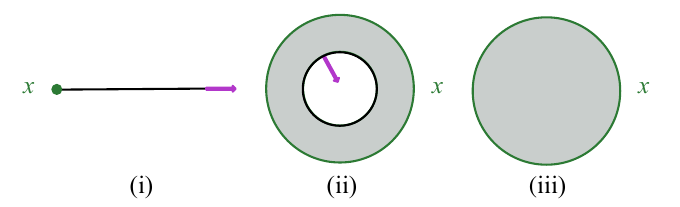}
  \caption{Three bordisms with boundary theory labeled by~$x\in C$.  The
  arrows indicate incoming vs.~outgoing boundary components.}\label{fig:6}
  \end{figure}

The vector space~$A=\TC(\cir)$ is a commutative Frobenius algebra.
Multiplication is $\TC$ ~applied to a pair of pants, the trace $\tau
_A\:A\to\CC$ is $\TC$~applied to a 2-disk~$D^2$ with $\partial D^2$~outgoing,
and the identity element~$\id_A$ is $\TC$~applied to~$D^2$ with $\partial
D^2$~incoming.  There is a vector space basis $e_1,\dots ,e_N$ of~$A$
consisting of idempotents; the order is not canonical.  The identity element
is $\id_A=e_1+\cdots +e_N$ and\footnote{Equation~\eqref{eq:117} follows from
(2.30), (2.31), and (2.36) in~\cite{MS}.}
  \begin{equation}\label{eq:117}
     \tau _A(e_i)=\lambda _i^2,\qquad i=1,\dots ,N. 
  \end{equation}
If $\OC$~is the finite set of isomorphism classes of simple objects in~$C$,
then we can identify $C=\Vect\bigl(\OC \bigr)$ as the category of vector
bundles on~$\OC$ and $A=\Fun\bigl(\OC \bigr)$ as the commutative algebra of
functions on~$\OC$.  Suppose the label~`$x$' in the last two bordisms of
Figure~\ref{fig:6} is a $\delta $-function at~$e_i\in A$.  Then bordism~(ii)
evaluates to an element~$\chi _i\in A$, which by~\cite[(2,36)]{MS} is
  \begin{equation}\label{eq:118}
     \chi _i=\frac{e_i}{\lambda _i}. 
  \end{equation}
Hence bordism~(iii) evaluates to 
  \begin{equation}\label{eq:119}
     \tau _A(\chi _i) = \lambda _i. 
  \end{equation}
The partition function of a closed oriented surface~$Y$ is computed by
Verlinde's formula 
  \begin{equation}\label{eq:120}
     \TC(Y) = \sum\limits_{i=1}^N\lambda _i^{\Euler(Y)} . 
  \end{equation}
In particular, 
  \begin{equation}\label{eq:121}
     \TC(S^2) = \sum\limits_{i=1}^N \lambda _i^2, 
  \end{equation}
which follows by decomposing the bordism $S^2\:\emptyset
^1\to\emptyset ^1$ as the composition $\emptyset
^1\xrightarrow{\;D^2\;}\cir\xrightarrow{\;D^2\;}\emptyset ^1$, from which we
conclude $\TC(S^2)=\tau _A(\id_A)$.

  \subsection{Computations in 3-dimensional extended field theories}\label{subsec:9.2}

Let $\sA$~be a spherical fusion category and 
  \begin{equation}\label{eq:122}
     \FAA\:\Bord_3(SO_3)\longrightarrow \tcat 
  \end{equation}
the associated 3-dimensional topological field theory of oriented manifolds
with $\FAA(\pt)=\sA$.  Recall (Remark~\ref{thm:2}) that a left
$\sA$-module~$\sL$ is a morphism $\sL\:1\to\sA$ in~$\tcat$.  If the
underlying category of~$\sL$ is semisimple with finitely many simple objects,
then $\sL$~is 2-dualizable.  To define a boundary theory $\rB\mstrut
_{\!\sL}\:1\to\FAA$ of \emph{oriented} manifolds we need $SO_2$-invariance
data on~$\sL$ as an $\sA$-module.  We call this a \emph{relative Frobenius
structure} on~$\sL$; it is relative to the pivotal structure on~$\sA$.  As in
\cite[\S7.12]{EGNO} let $\sE(\sL)=\End_{\sA}(\sL)$ be the tensor category
of $\sA$-module functors $\sL\to\sL$.  Then a relative Frobenius structure is
a nondegenerate trace on the vector space $\End_{\sE(\sL)}(1)$, where
$1\subset \sE(\sL)$ is the unit object. 
 
Specialize to the regular boundary theory with $\sL=\sA$ as a left
$\sA$-module.  Then $\sE(\sA)$~is identified with~$\sA$ (acting by right
multiplication on~$\sA$), and a relative Frobenius structure is a
nondegenerate trace on~$\EA1$.  Since we assume $\sA$~is fusion, $\EA1$~is a
1-dimensional vector space with canonical basis~$\id_1$.  Thus we
define the \emph{canonical relative Frobenius structure} on~$\sA$ to be the
trace with value~1 on~$\id_1\in \EA1$. 

  \begin{remark}[]\label{thm:55}
 For each object~$x\in \sA$ the pivotal structure~$\rho $ determines a linear
map $\EA x\to\EA1$ which sends $f\in \EA x$ to (compare~\eqref{eq:110})
  \begin{equation}\label{eq:123}
     1\xrightarrow{\;\;\coev_x\;\;} x\otimes x\dual\xrightarrow{\;\;f
     \otimes \id\;\;} x\otimes x\dual\xrightarrow{\;\;\rho _x\otimes \id\;\;}
     x^{\vee\vee}\otimes x\dual \xrightarrow{\;\;\ev_{x\dual}\;\;} 1 
  \end{equation}
Therefore, a relative Frobenius structure determines a Frobenius structure on
the category underlying~$\sA$.  In the sequel we use the canonical choice. 
  \end{remark}

  \begin{figure}[ht] 
  \centering
  \includegraphics[scale=1]{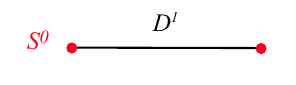}
  \caption{$D^1$ with boundary colored red by the regular boundary
  theory}\label{fig:7}
  \end{figure}

It is convenient to make pictorial computations for the pair~$(\FAA,\BA)$.
For example, consider the oriented interval~$D^1$ with boundary $\partial
D^1=S^0$ ``colored'' by the boundary theory~$\BA$, as rendered in
Figure~\ref{fig:7}.  To interpret it we first ignore the boundary theory,
read~$D^1$ as a bordism $\pt\amalg \pt\to \emptyset ^0$, and apply~$\FAA$ to
obtain the functor\footnote{More precisely, it is the bimodule in~$\tcat$
which represents the functor~\eqref{eq:124}.}
  \begin{equation}\label{eq:124}
     \begin{aligned} \Amod\times \Amod&\longrightarrow \Cat \\
      \sL_1\quad ,\;\quad \sL_2\;\;\;&\longmapsto
     \Hom_{\sA}(\sL_1,\sL_2)\end{aligned}  
  \end{equation}
Imposing the boundary condition amounts to setting $\sL_1=\sL_2=\sA$, thus
we obtain the linear category underlying~$\sA$. 

  \begin{remark}[]\label{thm:56}
 In $\Bord_3(SO_3)$ we must orient the tangent bundle of every bordism~$M$,
stabilized to have rank~3.  At an incoming boundary component~$(\partial
M)_0$, part of the data is an orientation-preserving isomorphism
  \begin{equation}\label{eq:125}
     \triv{\RR}\oplus \triv{\RR^k}\oplus T(\partial
     M)_0\xrightarrow{\;\;\cong \;\;} \triv{\RR^k}\oplus TM 
  \end{equation}
of vector bundles over~$(\partial M)_0$ which takes $1\in \triv{\RR}$ to an
inward pointing vector.  (Here $\dim M=3-k$.)  This applies to~$M=D^1$ with
both boundary points incoming, and it shows why the two boundary points can
be chosen to be isomorphic objects in $\Bord_3(SO_3)$.  Note that in a
1-dimensional field theory such an oriented bordism does not
exist.\footnote{Indeed, if $\rF\:\Bord_1(SO_1)\to\Vect$ assigns $\rF(\pt)=V$
for a finite dimensional vector space~$V$, a boundary theory $1\to\rF$ is
determined by~$\xi \in V$ and this data does not allow us to attach a number
to~$D^1$.}
  \end{remark}

  \begin{figure}[ht]
  \centering
  \includegraphics[scale=1.05]{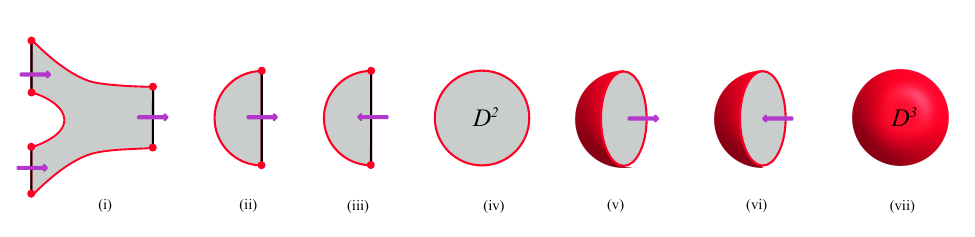}
  \caption{Seven bordisms to evaluate under~$(\FAA,\BA)$}\label{fig:8}
  \end{figure}

Figure~\ref{fig:8} shows seven morphisms in\footnote{In fact, there is a
variant bordism category which accounts for the colored boundaries;
see~\cite[\S4.3]{L}.} $\Bord_3(SO_3)$, with parts of boundaries colored
with~$\BA$.  The arrows indicate incoming vs.~outgoing boundary components.
We evaluate $(\FAA,\BA)$ on each in turn.  Bordism~(i) evaluates to the
tensor structure $\sA\otimes \sA\to\sA$.  Glue bordism~(ii) to an input of
bordism~(i) to deduce that it evaluates to the tensor unit~$1\in \sA$.
Morphism~(iii) evaluates to the linear functor
  \begin{equation}\label{eq:126}
     \begin{aligned} \sA&\longrightarrow \Vect \\ y&\longmapsto
      \Hom_{\sA}(1,y)\end{aligned} 
  \end{equation}

  \begin{figure}[ht]
  \centering
  \includegraphics[scale=1]{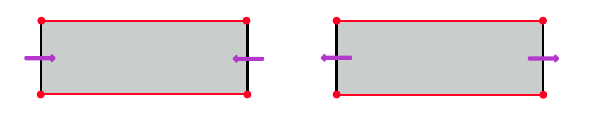}
  \caption{Evaluation and coevaluation}\label{fig:9}
  \end{figure}

\bigskip\noindent 
 This follows from the evaluation and coevaluation in
Figure~\ref{fig:9}---duality data for the underlying category~$\sA$---and
this can be done in the dimensional reduction~$\TA$ below.  Continuing, we
see bordism~(iv) in Figure~\ref{fig:8} evaluates to the vector space~$\EA1$;
bordism~(v) to $\id_1\in \EA1$; bordism~(vi) to the (canonical) relative
Frobenius trace on~$\EA1$; and bordism~(vii) to the number~1, by the
normalization in the canonical relative Frobenius structure. 

  \begin{figure}[ht]
  \centering
  \includegraphics[scale=1]{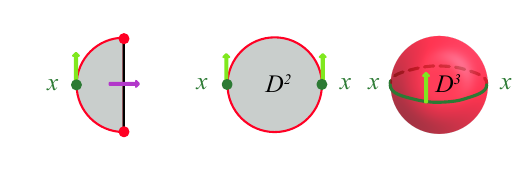}
  \caption{Three bordisms with domain wall labeled by~$x\in \sA$}\label{fig:10}
  \end{figure}

Tensoring on the right by an object~$x\in \sA$ defines a left $\sA$-module
map $\sA\to\sA$.  By the cobordism hypothesis this determines a domain
wall\footnote{Since the group~$SO_1$ is trivial there is no extra structure
necessary for~$\Dx$ in the \emph{oriented} theory~$\FAA$.} $\Dx\:\BA\to\BA$.
Consider the three bordisms in Figure~\ref{fig:10}.  We co-orient the
codimension two submanifold labeled~$x$ inside the codimension one boundary;
the co-orientation indicates the direction of the domain wall.  By cutting as
in Figure~\ref{fig:8} we evaluate bordism~(i) to the object~$x\in \sA$,
bordism~(ii) to the vector space~$\EA x$, and bordism~(iii) to the
number~$\dim(x)$.  For the latter we see first that bordism~(iii) is the
canonical trace of~$\EA x$ applied to~$\id_x$.  Comparing~\eqref{eq:123}
with~\eqref{eq:110} we compute it to be~$\dim(x)$.
 
Dimensional reduction of~$\FAA$ along the bordism~$b$ in Figure~\ref{fig:7}
is a 2-dimensional oriented theory 
  \begin{equation}\label{eq:127}
     \TA\:\Bord_2(SO_2)\longrightarrow \Cat 
  \end{equation}
with $\TA(\pt)=\FAA(b)=\sA$. 

  \begin{proposition}[]\label{thm:57}
 $\TA$~ induces the Frobenius structure~$\tau $ on the category
~$\sA$ which satisfies
  \begin{equation}\label{eq:128}
     \tau _x(\id_x)=\dim(x) 
  \end{equation}
for each object~$x\in \sA$ 
  \end{proposition}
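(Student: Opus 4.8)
The plan is to identify the 2-dimensional theory $\TA$ as the one produced by the cobordism hypothesis from the category $\sA$ equipped with a specific Frobenius structure, and then to read off that Frobenius structure from the pictorial computations already carried out for $(\FAA,\BA)$ in Figure~\ref{fig:8}. Since $\TA$ is obtained by dimensional reduction of $\FAA$ along the bordism $b$ of Figure~\ref{fig:7}, it is a 2-dimensional oriented topological field theory with $\TA(\pt)=\sA$. By the cobordism hypothesis in dimension~2 (as recalled in~\S\ref{subsec:9.1}), such a theory is determined by its value on a point together with $SO_2$-invariance data, which for a finite semisimple category is exactly a Frobenius structure $\tau$ on $\sA$. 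So the content of the proposition is a computation of that induced $\tau$, and it suffices to verify that $\tau_x(\id_x)=\dim(x)$ for every object $x\in\sA$, since a Frobenius structure on a semisimple category is determined by its values $\tau_{x_i}(\id_{x_i})$ on simple objects (equivalently, by the trace on $HH_0$).

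First I would set up the dictionary between 2-dimensional bordisms-with-boundary-theory-$\rB_x$ for $\TA$ and 3-dimensional bordisms-with-boundary-theory-$\BA$ and domain wall $\Dx$ for $\FAA$. Under dimensional reduction along $b$, a point labeled by $x\in\TA(\pt)=\sA$ corresponds to the codimension-two domain wall $\Dx$ colored by $x$ sitting inside the $\BA$-colored boundary; a circle in the 2d theory crossed with $b$ becomes the relevant 3-manifold with $\BA$-boundary. Concretely, the 2d bordism of Figure~\ref{fig:6}(i) that evaluates to $x\in\sA$ in $\TA$ is, after crossing with $b$, exactly the 3d bordism of Figure~\ref{fig:10}(i), which was evaluated to $x\in\sA$; the 2d cap-with-label-$x$ (Figure~\ref{fig:6}(ii) specialized) crossed with $b$ is the 3d bordism Figure~\ref{fig:10}(ii), evaluated to $\EA x$; and the closed-up version Figure~\ref{fig:6}(iii) crossed with $b$ is Figure~\ref{fig:10}(iii), evaluated to the number $\dim(x)$ by the computation comparing~\eqref{eq:123} with~\eqref{eq:110}. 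Thus the trace $\tau$ induced on $\sA$ by $\TA$ is precisely the one whose value on $\id_x$ is read off from Figure~\ref{fig:10}(iii), namely $\dim(x)$; in particular on a simple object $x_i$ this gives $\lambda_i=\dim(x_i)$.

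The main step to nail down carefully is the compatibility of the dimensional-reduction identification with the boundary/domain-wall structure: one must check that reducing the pair $(\FAA,\BA)$ along $b$, together with the domain wall $\Dx$, really does produce the 2d theory $\TA$ with boundary theory $\rB_x$ in the sense of~\S\ref{subsec:9.1}, so that the three bordisms of Figure~\ref{fig:10} genuinely compute $\TA$ evaluated on the three bordisms of Figure~\ref{fig:6}. This is a bookkeeping exercise with the orientation conventions (Remark~\ref{thm:56}) and the fact that $b$ has both boundary points incoming, but it is where the real work lies; once it is in place the identity $\tau_x(\id_x)=\dim(x)$ is immediate from the already-established evaluation of Figure~\ref{fig:10}(iii). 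Finally, I would note that knowing $\tau$ on all objects (not just simples) is consistent, since for $x=\bigoplus_i x_i^{\oplus n_i}$ additivity of $\dim$ and of the trace both give $\sum_i n_i\dim(x_i)$, so the stated formula indeed characterizes the induced Frobenius structure completely.
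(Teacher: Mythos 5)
Your proof is correct and takes essentially the same route as the paper's: identify the boundary theory $\rB_x$ for $\TA$ with the domain wall $\Dx$ for $(\FAA,\BA)$, evaluate Figure~\ref{fig:10}(iii) once as $\dim(x)$ via the three-dimensional computation and once---after recognizing it as the reduction along~$b$ of Figure~\ref{fig:6}(iii), which is exactly what the decomposition in Figure~\ref{fig:12} makes precise---as $\tau_x(\id_x)$ via \eqref{eq:119}. The only slip is inessential to the argument: Figure~\ref{fig:6}(ii) crossed with~$b$ is a three-dimensional bordism producing a \emph{vector} in $\TA(\cir)$ (the outer spherical annulus of Figure~\ref{fig:12}), not the two-dimensional bordism of Figure~\ref{fig:10}(ii), which produces the vector \emph{space} $\EA x$.
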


\noindent
 Stated differently, $\tau $~is the canonical relative Frobenius structure
on~$\sA$.  

  \begin{figure}[ht]
  \centering
  \includegraphics[scale=.8]{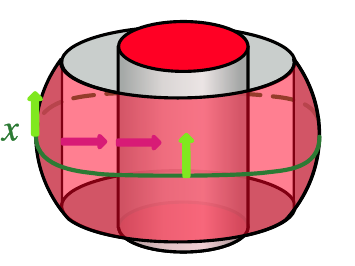}
  \caption{A decomposition of Figure~\ref{fig:10}(iii)}\label{fig:12}
  \end{figure}

  \begin{proof}
 The object~$x\in \sA$ determines the boundary condition~$\rB_{x}$ for~$\TA$
(Figure~\ref{fig:6}(i)) which corresponds to the domain wall~$\Dx$
in~$(\FAA,\BA)$ depicted in Figure~\ref{fig:10}(i).  Evaluate
Figure~\ref{fig:10}(iii) by decomposing as in Figure~\ref{fig:12}.  The outer
spherical annulus evaluates to the image of~$x$ in the commutative Frobenius
algebra~$\TA(\cir)$.  The inner solid cylinder evaluates to the trace
on~$\TA(\cir)$.  On the other hand, we already evaluated
Figure~\ref{fig:10}(iii) as~$\dim(x)$.  Now \eqref{eq:128}~follows
from~\eqref{eq:119}.
  \end{proof}

Recall that $\OA$~is the finite set of isomorphism classes of simple objects
in~$\sA$.   

  \begin{proposition}[]\label{thm:58}
 
  \begin{equation}\label{eq:129}
     \FAA(S^3) = \frac{1}{\sum\limits_{x\in
     \OA}\!\!\!\!{\dim(x)^2}^{\phantom{W}}}. 
  \end{equation}
  \end{proposition}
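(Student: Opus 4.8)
The plan is to compute $\FAA(S^3)$ from the genus-zero Heegaard splitting $S^3 = B^3\cup_{S^2}B^3$, supplying the one missing normalization from the $2$-dimensional dimensional reduction $\TA$ of \S\ref{subsec:9.2} (reduction along the $\BA$-colored interval of Figure~\ref{fig:7}) together with the Verlinde formula. I would work inside the vector space $\FAA(S^2)$, which is one-dimensional --- for instance because $\FAA(S^2)\cong\End_{\FAA(\cir)}\bigl(\FAA(B^2)\bigr)$ and $\FAA(B^2)$ is the simple unit object of $\FAA(\cir)$. Introduce two distinguished vectors in it: $v:=\FAA(B^3)$, the image of the empty $1$-manifold under $B^3$ read as a bordism $\emptyset^2\to S^2$; and $w:=\BA(S^2)$, the value of the regular boundary theory on $S^2$. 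Let $\langle\,\cdot\,,\cdot\,\rangle$ denote the pairing on $\FAA(S^2)$ coming from $S^2\times[0,1]$ read as a bordism $S^2\amalg S^2\to\emptyset^2$.

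Next I would record three evaluations. (a) Capping both ends of $S^2\times[0,1]$ with $B^3$ produces $S^3$, so $\FAA(S^3)=\langle v,v\rangle$. (b) The $3$-ball with its boundary sphere colored by $\BA$ evaluates to $1$ --- this is the normalization built into the canonical relative Frobenius structure on $\sA$ fixed in \S\ref{subsec:9.2} (cf.\ bordism~(vii) of Figure~\ref{fig:8}) --- while that same colored $3$-manifold also computes $\langle v,w\rangle$; hence $\langle v,w\rangle=1$. (c) Coloring both ends of $S^2\times[0,1]$ with $\BA$ is, by definition of dimensional reduction along the interval, the value $\TA(S^2)$; so $\langle w,w\rangle=\TA(S^2)$. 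For (c) I would then invoke Proposition~\ref{thm:57}: the Frobenius structure that $\TA$ induces on $\sA$ has $\tau_x(\id_x)=\dim(x)$, so the scalars $\lambda_i$ of \S\ref{subsec:9.1} are the dimensions of a representative set of simple objects, and the Verlinde formula \eqref{eq:121} gives $\TA(S^2)=\sum_{x\in\OA}\dim(x)^2=:D$.

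Finally, since $\FAA(S^2)$ is one-dimensional and $\langle v,w\rangle=1$ forces $v\neq 0$, write $w=c\,v$; then $c\langle v,v\rangle=1$ from (b) and $c^2\langle v,v\rangle=D$ from (c), so $c=D$ and $\langle v,v\rangle=1/D$, which with (a) is exactly \eqref{eq:129}. (As a sanity check, for $\sA=\cVect[G]$ every simple has dimension $1$, so $D=\#G$ and one recovers $\FAA(S^3)=1/\#G$, consistent with \eqref{eq:9}.)

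I expect the only real work to be organizational: checking carefully that the three colored $3$-manifolds above are indeed computed by $\langle v,v\rangle$, $\langle v,w\rangle$, $\langle w,w\rangle$ (keeping the incoming/outgoing and orientation-reversal conventions straight, which is harmless since the theory is unoriented), and confirming that $w=\BA(S^2)$ is a nonzero multiple of the vacuum vector $v$ so that the ratio $c$ is well defined. The one genuinely substantive ingredient, the factor $D=\sum_{x\in\OA}\dim(x)^2$, enters solely through the passage to the $2$-dimensional theory $\TA$, where it is the global dimension appearing in the Verlinde formula and where the spherical (pivotal) structure on $\sA$ is used via Proposition~\ref{thm:57}.
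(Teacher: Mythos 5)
Your argument is correct and is essentially the paper's own proof: the paper likewise works in the one-dimensional space $\FAA(S^2)$, writes the $\BA$-colored sphere as $d$ times the vacuum vector $u=\FAA(B^3)$, pins down $d$ and $\sigma(u)=\FAA(S^3)$ using the two normalizations you use --- the $\BA$-colored ball evaluating to $1$ and the doubly $\BA$-colored cylinder evaluating to $\TA(S^2)=\sum_{x\in\OA}\dim(x)^2$ via Proposition~\ref{thm:57} and the Verlinde formula \eqref{eq:121}. Your $v,w,c$ are the paper's $u$, $d(\sA)u$, $d$, so no further comment is needed.
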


\noindent 
 A corollary is that the denominator---the \emph{categorical dimension} 
  \begin{equation}\label{eq:131}
     d(\sA) = \sum\limits_{x\in \OA}\!\!\dim(x)^2 
  \end{equation}
of~$\sA$---is nonzero, a known result~\cite[ Theorem~7.21.12]{EGNO}.

 \bigskip
  \begin{figure}[ht]
  \centering
  \includegraphics[scale=.8]{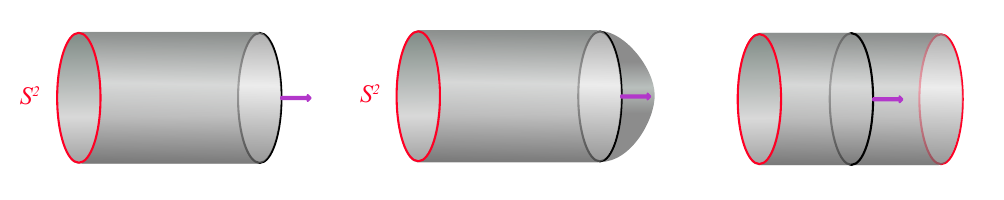}
  \caption{Three more bordisms with boundary theory}\label{fig:11}
  \end{figure}

  \begin{proof}
 First, $\FAA(S^2)$~is a commutative Frobenius algebra.  Let $\sigma
\:\FAA(S^2)\to\CC$ denote its trace.  There is an algebra isomorphism
$\FAA(S^2)\cong \End\mstrut _{\sZ(\sA)}(1)$, where $\sZ(\sA)=\FAA(\cir)$~is
the Drinfeld center of~$\sA$.  The latter is 1-dimensional, since $\EA1$~is,
and has canonical basis the unit~$u\in \FAA(S^2)$.  Then $\FAA(S^3)=\sigma
(u)$.  Bordism~(i) in Figure~\ref{fig:11} evaluates to a vector~$du$ for
some~$d\in \CC$.  Use bordism~(ii) to prove $\sigma (u)=d\inv $, since
bordism~(vii) in Figure~\ref{fig:8} evaluates to the number~1.  Bordism~(iii)
in Figure~\ref{fig:11} evaluates to $d^2\sigma (u)=d$, but as it is the
product of the bordism~$b$ of Figure~\ref{fig:7} with~$S^2$ it also evaluates
to
  \begin{equation}\label{eq:130}
     \TA(S^2) = \sum\limits_{x\in \OA}\dim(x)^2 = d(\sA), 
  \end{equation}
according to~\eqref{eq:121}. 
  \end{proof}

\bigskip
  \begin{figure}[ht]
  \centering
  \includegraphics[scale=1]{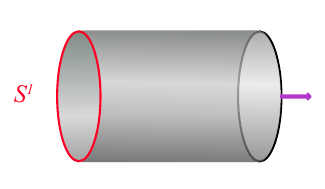}
  \caption{A special element of~$\sZ(\sA)$}\label{fig:13}
  \end{figure}

Finally, we observe that $(\FAA,\BA)$ maps the bordism in Figure~\ref{fig:13}
to a distinguished object in $\FAA(\cir)=\sZ(\sA)$.  Its simple constituents
generate a subcategory of ``abstract Wilson loop operators''.

   \section{Nonabelian lattice models and duality}\label{sec:6}

Fix a spherical fusion category~$\sA$.  The cobordism hypothesis associates
to~$\sA$ an extended 3-dimensional topological field theory~$\FAA$ of
oriented manifolds.  This is the \emph{Turaev-Viro theory}~\cite{TV,BW1,BW2}
as generalized by Barrett-Westbury.  It has a state sum formulation.  In
principle the state sum can be derived from the extended field theory
structure, so ultimately from the cobordism hypothesis; see~\cite{Da} for a
careful treatment of state sums in 2-dimensional extended oriented field
theories.  In~\S\ref{subsec:6.1} we state the relevant formulas for
Turaev-Viro theory, mostly following~\cite{BK}.  Then in~\S\ref{subsec:6.2}
we construct a boundary theory for latticed manifolds, at least on closed
oriented latticed 2-manifolds.  For $\sA=\Vect[G]$ we recover the finite
gauge theory of~\S\ref{sec:4}.  Applied to $\sA=\Rep(G)$ we obtain a
(Kramers-Wannier) dual\footnote{Recall the discussion in~\S\ref{subsec:3.5}.}
to the $G$-Ising model for all finite groups~$G$, including $G$~nonabelian.

  \subsection{Turaev-Viro theory}\label{subsec:6.1}

Let $\sA$~be a spherical fusion category.  For objects $x_1,\dots ,x_n\in
\sA$ define the vector space
  \begin{equation}\label{eq:93}
     \langle x_1,\dots ,x_n \rangle = \HA(1,x_1\otimes \cdots\otimes
     x_n), 
  \end{equation}
where $1\in \sA$ is the tensor unit.  Then the pivotal structure produces an
isomorphism 
  \begin{equation}\label{eq:94}
     \langle x_1,\dots ,x_n \rangle\xrightarrow{\;\;\cong
     \;\;}\HA(x_1\dual,x_2\otimes \cdots\otimes
     x_n)\xrightarrow{\;\;\cong \;\;}\langle x_2,\dots ,x_n,x_1 \rangle 
  \end{equation}
whose $n^{\textnormal{th}}$ power is the identity.  Therefore, $\langle
x_1,\dots ,x_n  \rangle$ depends only on the set of cyclically ordered
objects $x_1,\dots ,x_n$.  Furthermore, for $x_i,y_i,z\in \sA$ there is a
pairing 
  \begin{equation}\label{eq:95}
     \langle x_1,\dots ,x_n,z\dual \rangle\otimes \langle z,y_1,\dots ,y_m
     \rangle\longrightarrow \langle x_1,\dots ,x_n,y_1,\dots ,y_m \rangle 
  \end{equation}
defined by tensoring $\HA$~spaces and contracting via evaluation
$z\dual\otimes z\to 1$.  For any $x_1,\dots ,x_n\in \sA$ the pairing
  \begin{equation}\label{eq:99}
     \langle x_n\dual,\dots ,x_1\dual \rangle\otimes \langle x_1,\dots ,x_n
     \rangle\longrightarrow \EA1\longrightarrow  \CC 
  \end{equation}
is nondegenerate.  The second arrow is the canonical relative Frobenius
structure defined in~\S\ref{subsec:9.2}.  Henceforth, we restrict to
\emph{simple} objects of~$\sA$.
 
Suppose $(Y,\Lambda )$~is a closed oriented latticed 2-manifold.  Let
$\OEdges(\Lambda )$ be the set of pairs $e=(\be,\mathfrak{o})$ in which
$\be\in \Edges(\Lambda )$ and $\mathfrak{o}$~is an orientation of~$\be$.  Let
$-e$~denote the oppositely oriented edge to~$e$.  Define a \emph{labeling} as
a function 
  \begin{equation}\label{eq:96}
     \ell \:\OEdges(\Lambda )\longrightarrow \Obj\sA 
  \end{equation}
which satisfies: $\ell (e)$~is a simple object for all~$e\in \OEdges(\Lambda
)$, and $\ell (-e)=\ell (e)\dual$.  We say labelings $\ell \sim \ell '$ are
equivalent if $\ell (e)\cong \ell '(e)$ for all $e\in \OEdges(\Lambda )$.
Define the vector space \cite[(3.2.2)]{BK}
  \begin{equation}\label{eq:97}
     \VAA(Y;\Lambda )\cong \bigoplus\limits_{\ell }\bigotimes\limits_{f} \;\langle \ell
     (e_1),\dots ,\ell (e_n) \rangle, 
  \end{equation}
where the direct sum is over equivalence classes of labelings~$\ell $, the
tensor product over faces~$f$ of~$\Lambda $, and the boundary of a face~$f$
is an $n$-gon with edges $e_1,\dots ,e_n$ cyclically ordered and oriented as
the boundary of~$f$.  It has the following interpretation in the extended
field theory~$\FAA$.  Namely, as in~\S\ref{sec:9} let~$\bA$ be the regular
boundary theory defined by~$\sA$ as a left module over itself.  Excise open
disks about each vertex of~$\Lambda $ to obtain a surface~$Y'$; insert the
regular boundary theory~$\bA$ on~$\partial Y'$.  Then $\FAA(Y')$ is the
vector space~\eqref{eq:97}, as we briefly explain.  Compute~$\FAA(Y')$ by
cutting along each edge of~$\Lambda \cap Y'$ to decompose~$Y'$ as a union of
polygons with vertices blown-up to arcs colored with~$\bA$; see
Figure~\ref{fig:14}.  Each such polygon evaluates to a vector
space~\eqref{eq:93}, once labels are inserted on the edges.  Evaluation of
the thin rectangles attached to doubled edges leads to the sum and product
in~\eqref{eq:97}.

  \begin{figure}[ht]
  \centering
  \includegraphics[scale=1]{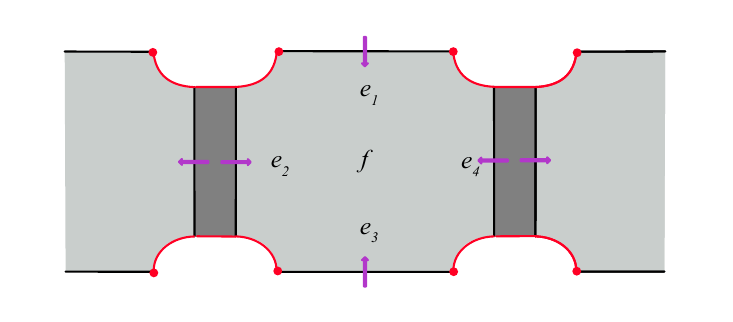}
  \caption{The blown-up lattice}\label{fig:14}
  \end{figure}

  \begin{example}[$\sA={\Vect[G]}$]\label{thm:48}
 A simple object is a ``skyscraper vector bundle'', a line supported at a
single element~$g\in G$.  Hence an equivalence class of labelings is a
function $g\:\OEdges(\Lambda )\to G$ such that $g(-e)=g(e)\inv $.  Let
$\CC_g$~be the skyscraper trivial line supported at~$g$ and $*$~the
convolution product~\eqref{eq:8}.  We easily compute the vector space
  \begin{equation}\label{eq:98}
     \langle g_1,\dots ,g_n \rangle = \Hom\mstrut
     _{\Vect[G]}(1,\CC_{g_1}*\cdots*\CC_{g_n}) = \begin{cases} \CC,&g_1\cdots
     g_n=e;\\0,&\textnormal{otherwise}.\end{cases} 
  \end{equation}
The set of labelings is equivalent to the groupoid of principal $G$-bundles
over~$\Lambda $ together with a section over $\Vertices(\Lambda )$; the
equivalence is parallel transport along edges.  (Recall that the section over
vertices is the canonical Dirichlet boundary theory; see the end
of~\S\ref{subsec:3.1}.)  According to~\eqref{eq:98} the vector space
in~\eqref{eq:97} for fixed~$\ell ,f$ is zero unless the holonomy around the
face~$f$ is trivial.  Therefore, we can identify~\eqref{eq:97} with the
vector space of functions on~$\bung {Y,\Vertices(\Lambda )}$, the stack of
$G$-bundles on~$Y$ trivialized at vertices of~$\Lambda $.
  \end{example}

  \begin{figure}[ht]
  \centering
  \includegraphics[scale=1]{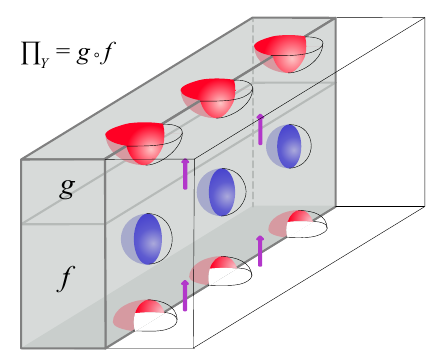}
  \caption{A schematic of the bordism $\PY=g\circ f$ with blue antispheres}\label{fig:15}
  \end{figure}

To compute the vector space~$\FAA(Y)$ we must glue back in the excised disks
about vertices.  We cannot literally do that, due to the boundary theory, so
we take a different route.  First, recall from the proof of
Proposition~\ref{thm:58} that $\FAA(S^2)$ is 1-dimensional with canonical
basis element~$u$.  Furthermore, $S^2$~colored with the boundary theory~$\BA$
defines the vector $d(\sA)u\in \FAA(S^2)$---this is the evaluation of the
relative theory, which amounts to evaluating~$(\FAA,\BA)$ on bordism~(i) in
Figure~\ref{fig:11}---for the nonzero constant~$d(\sA)$ defined
in~\eqref{eq:131}.  Define the ``antisphere'' to be a colored~$S^2$
postulated to have value $d(\sA)\inv u\in \FAA(S^2)$.  Now define a bordism
$\PY\:Y'\to Y'$ as the manifold~$[0,1]\times Y$ with, for each~$v\in
\Vertices(\Lambda )$; a ``hemiball'' removed about each vertex $\{0\}\times
v$, $\{1\}\times v$ at the boundary; and a small ball removed
at~$\{1/4\}\times v$ with boundary an antisphere.  Put the boundary
theory~$\bA$ on the hemisphere boundary of each excised hemiball.
Figure~\ref{fig:15} is a 2-dimensional slice through~$\PY$.  The
theory~$\FAA$ evaluates on each end of~$\PY$ to the vector space
$\FAA(Y')=\VAA(Y;\Lambda )$.

  \begin{lemma}[]\label{thm:49}
 $\FAA(\PY)\:\VAA(Y;\Lambda )\longrightarrow \VAA(Y;\Lambda )$ is a projection with image
isomorphic to~$\FAA(Y)$.
  \end{lemma}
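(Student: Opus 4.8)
The plan is to analyze the bordism $\PY = g \circ f$ directly, using the factorization through the antisphere. First I would observe that $\PY$ is built from the trivial cylinder $[0,1]\times Y'$ by modifying it near the vertices of $\Lambda$: at $\{1/4\}\times v$ we excise a small ball bounded by an antisphere, which by definition (and by the evaluation of bordism~(i) in Figure~\ref{fig:11}, giving $d(\sA)u$) contributes the scalar $d(\sA)\inv$ times the value $d(\sA)$ of the genuine $\BA$-colored $S^2$ cap that would otherwise appear. The point is that cutting $\PY$ along the slice $\{1/4\}\times Y$ presents $\FAA(\PY)$ as a composition of two bordisms $f\colon Y' \to Y'$ and $g\colon Y' \to Y'$, each of which is (up to the antisphere normalization) the ``half-pipe'' that caps off the vertex-disks of one copy of $Y'$ with $\BA$-colored hemispheres. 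Since $S^2$ colored with $\BA$ gives $d(\sA)\,u$ and the antisphere gives $d(\sA)\inv u$, and $\FAA(S^2)$ is one-dimensional with $u^2 = u$ up to the constant $d(\sA)$ computed in Proposition~\ref{thm:58}, the composite normalization works out so that $\FAA(\PY)$ is an honest idempotent: $\FAA(\PY)^2 = \FAA(\PY)$.

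To make this precise I would argue as follows. Gluing $\PY$ to itself along an outgoing/incoming $Y'$ boundary, the two antispheres at the $\{1/4\}$-slices together with the two $\BA$-colored hemiball boundaries that get identified across the seam combine into, for each vertex $v$, a configuration that by the $S^2$-evaluation and the antisphere definition reduces to a single such decorated slice; the bookkeeping is exactly that $d(\sA)\inv\cdot d(\sA)\inv\cdot d(\sA) = d(\sA)\inv$, matching one copy of $\PY$. Equivalently, one can decompose $\PY\circ\PY$ as a bordism, slide the interior antispheres together using the fact that two parallel $\BA$-colored $S^2$'s in $[0,1]\times Y$ bounding a product region $S^2\times[0,1]$ evaluate (by Figure~\ref{fig:11}(ii), the ``cylinder'' giving $\sigma(u) = d(\sA)\inv$) to cancel one factor of $d(\sA)$, leaving $\FAA(\PY)$ unchanged. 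This is the step I expect to be the main obstacle: keeping careful track of all the normalization constants $d(\sA)^{\pm1}$ attached to antispheres versus genuine $\BA$-spheres and confirming they conspire to idempotency rather than to some other scalar multiple of a projection.

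For the image, I would use that $\FAA$ is a symmetric monoidal functor, so $\FAA(\PY)$ being idempotent means its image is a direct summand, and I must identify that summand with $\FAA(Y)$. The natural route: $\PY$ is bordant, as a bordism $Y'\to Y'$, to the ``cup–cap'' bordism that factors through the closed surface $Y$—namely one can view $[0,1]\times Y$ with the vertex-hemiballs removed and then the antispheres filled in as homotopic to first capping $Y'$ up to $Y$ (filling in the vertex disks, which is exactly what the $\BA$-colored hemispheres plus antisphere normalization accomplish in the relative theory) and then re-excising. Concretely, $\PY = \iota\circ\pi$ where $\pi\colon Y'\to Y$ is the bordism that fills the vertex-disks (with $\BA$ and antisphere decoration so it lands correctly) and $\iota\colon Y\to Y'$ re-excises them; then $\FAA(\pi)$ and $\FAA(\iota)$ exhibit $\FAA(Y)$ as a retract of $\VAA(Y;\Lambda) = \FAA(Y')$, with $\FAA(\iota)\circ\FAA(\pi) = \FAA(\PY)$ and $\FAA(\pi)\circ\FAA(\iota) = \id_{\FAA(Y)}$ (the latter again by the $S^2$-normalization: capping and re-excising a disk on the closed surface side is the identity once the antisphere cancels the $d(\sA)$).

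Finally I would cross-check the whole argument in the case $\sA = \Vect[G]$ against Example~\ref{thm:48}: there $\VAA(Y;\Lambda)$ is functions on $\bung{Y,\Vertices(\Lambda)}$, $\FAA(Y) = \Fun(\bung Y)$, and $\FAA(\PY)$ should be the averaging projector over the residual gauge transformations at vertices (the commuting family of vertex projectors mentioned in~\S\ref{subsec:6.1}), whose image is indeed $\Fun(\bung Y)$—this confirms the normalization constants are right and the image identification is the expected one. Having verified the idempotent property, the direct-summand decomposition, and the identification of the summand via the retraction $Y'\leftrightarrows Y$, the lemma follows.
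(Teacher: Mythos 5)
Your proposal is correct and follows essentially the same route as the paper: idempotency via the diffeomorphism of $\PY\circ\PY$ with $\PY$ plus one extra $\BA$-colored sphere and one extra antisphere per vertex (which cancel using the multiplication in $\FAA(S^2)$), and the identification of the image via the factorization $\PY=g\circ f$ through $Y$ with $f\circ g\simeq \id_Y$ after the same cancellation. The only slip is the aside that ``$u^2=u$ up to the constant $d(\sA)$''---$u$ is the unit of $\FAA(S^2)$, so $u^2=u$ exactly, and the factors of $d(\sA)^{\pm1}$ enter only through the values of the colored spheres; your final bookkeeping is nonetheless right.
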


  \begin{proof}
 The composition $\PY\circ \PY$ in the bordism category is diffeomorphic
to~$\PY$ with (i)~an interior ball excised for each $v\in \Vertices(\Lambda
)$ with boundary colored by~ $\bA$, and (ii)~two antispheres for each vertex.
 Use the multiplication in~$\FAA(S^2)$ to cancel the $\BA$-sphere against one
of the antispheres.  To complete the proof, refer to Figure~\ref{fig:15} and
observe that $f\circ g\approx \id_Y$, after canceling the $\bA$-spheres
against the antispheres, which implies that $\FAA(g)$~is an isomorphism
of~$\FAA(Y)$ onto the image of~$\FAA(\PY)$.
  \end{proof}

  \begin{figure}[ht]
  \centering
  \includegraphics[scale = .7]{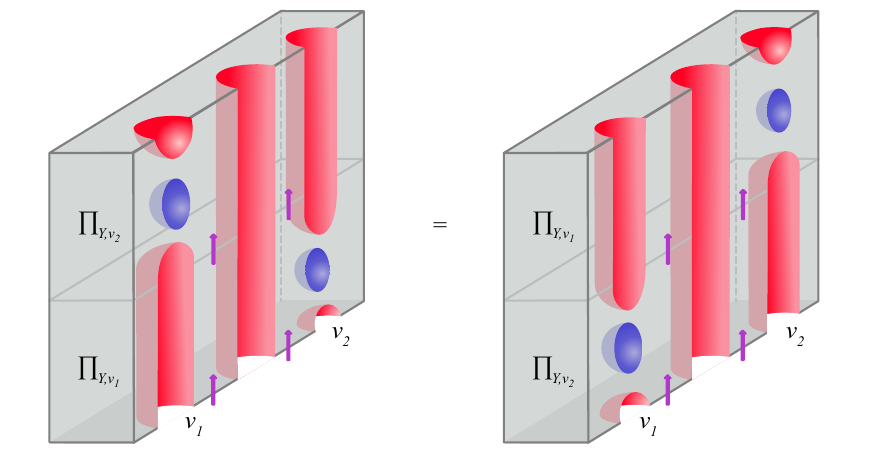}
  \caption{Commutativity of local projections}\label{fig:5}
  \end{figure}
 \bigskip

Decompose~$\FAA(\PY)$ as a composition of \emph{commuting} projections
$\FAA(\PYv)\:\VAA(Y;\Lambda )\to\VAA(Y;\Lambda )$, $v\in \Vertices(\Lambda )$, by setting
  \begin{equation}\label{eq:100}
     \PYv=[0,1]\times Y'\cup \mstrut _{[\frac1{16},\frac{15}{16}]\times \partial
     _vY'}\,\left[\frac 1{16},\frac{15}{16}\right]\times D^2 \;\;\setminus
     \;\;\left[\frac 
     18,\frac 38\right]\times D^2,  
  \end{equation}
where $\partial _vy'\approx \cir$ is the boundary of the excised ball
about~$v$ and the boundary of $[\frac 18,\frac 38]\times D^2$ is the
antisphere.  There is a standard ``picture proof'' (Figure~\ref{fig:5}) that
for vertices~$v_1,v_2$ we have
  \begin{equation}\label{eq:104}
     \Pi \mstrut _{Y,v_1}\circ \Pi \mstrut _{Y,v_2} = \Pi \mstrut
     _{Y,v_2}\circ \Pi \mstrut _{Y,v_1}  
  \end{equation}
in the bordism category.

  \begin{figure}[ht]
  \centering
  \includegraphics[scale=.9]{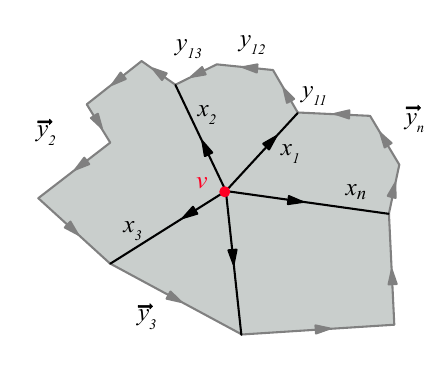}
  \caption{The labeled closed star of~$v$}\label{fig:4}
  \end{figure}
 \bigskip

We conclude with a formula for~$\FAA(\PYv)$ in terms of the state
sum~\eqref{eq:97}.  Figure~\ref{fig:4} depicts the ``closed star'' of the
vertex~$v$ and defines a labeling of its edges.  For any labeling~$\ell $
in~\eqref{eq:97} with these labels, the desired projection is the identity on
the vector space~\eqref{eq:93} associated to faces which do not contain~$v$,
tensored with the linear map
  \begin{equation}\label{eq:101}
     \begin{aligned} \bigotimes\limits_{i=1}^n\;\langle x_i,\vy_i,x_{i+1}\dual
      \rangle&\xrightarrow{\;\;\phantom{1/d(\sA)}\;\;}\;
      \bigoplus\limits_{\tx_i,z}\,\bigotimes\limits_{i=1}^n \;\langle
      x_i,\vy_i,x_{i+1}\dual \rangle\otimes \langle x_i, \tx_i\dual,z \rangle
     \otimes \langle 
      z\dual,\tx_i,x_i\dual \rangle \\ &\quad \;\; =\quad\;\;
      \bigoplus\limits_{\tx_i,z}\,\bigotimes\limits_{i=1}^n \;\langle
      x_i,\vy_i,x_{i+1}\dual \rangle\otimes \langle x_{i+1}, \tx_{i+1}\dual,z
     \rangle \otimes 
      \langle z\dual,\tx_i,x_i\dual \rangle \\
     &\xrightarrow{\;\;\phantom{1/d(\sA)}\;\;}\; 
      \bigoplus\limits_{\tx_i}\bigotimes\limits_{i=1}^n\;\langle
      \tx_i,\vy_i,\tx_{i+1}\dual\rangle\\&\xrightarrow{\;\;1/d(\sA)\;\;}\;
      \bigoplus\limits_{\tx_i}\bigotimes\limits_{i=1}^n\;\langle
      \tx_i,\vy_i,\tx_{i+1}\dual\rangle,\end{aligned} 
  \end{equation}
where we set $x_{n+1}=x_1$ and $\tx_{n+1}=\tx_1$.  The first map is
constructed from the dual of the duality pairing~\eqref{eq:99}.  The second
map is defined using the cyclic invariance~\eqref{eq:94} and the
pairing~\eqref{eq:95}:
  \begin{equation}\label{eq:102}
     \begin{aligned} \langle x_i,\vy_i,x_{i+1}\dual \rangle\otimes \langle
     x_{i+1}, &
       \tx_{i+1}\dual,z \rangle \otimes \langle z\dual,\tx_i,x_i\dual \rangle
       \\ &\longrightarrow \langle \tx_{i+1}\dual,z,x_{i+1} \rangle\otimes
       \langle x_{i+1}\dual,z\dual,\tx_i,\vy_i \rangle \\&\longrightarrow
     \langle \tx_i,\vy_i,x_{i+1}\dual,x_{i+1},\tx_{i+1}\dual  \rangle 
       \\&\longrightarrow
       \langle \tx_i,\vy_i,\tx_{i+1}\dual \rangle\end{aligned} 
  \end{equation}
The last map is the indicated multiplication operator. 

  \begin{figure}[ht]
  \centering
  \includegraphics[scale=.8]{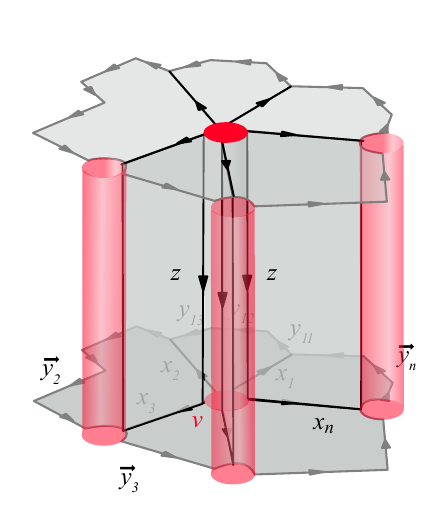}
  \caption{A small portion of~$\PYv$}\label{fig:17}
  \end{figure}

  \begin{proposition}[]\label{thm:59}
 The operator on~$\VAA(Y;\Lambda )$ defined in~\eqref{eq:101} is~$\FAA(\PYv)$. 
  \end{proposition}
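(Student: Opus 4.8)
The plan is to evaluate $\FAA$ on $\PYv$ by combining locality with the pictorial computations of \S\ref{subsec:9.2}. First I would note that $\PYv$ of \eqref{eq:100} agrees with the cylinder $[0,1]\times Y'$ outside an arbitrarily small neighborhood of the time-arc $[0,1]\times\{v\}$, and there it is a product bordism; hence $\FAA(\PYv)$ acts as the identity on the tensor factors $\langle\ell(e_1),\dots,\ell(e_n)\rangle$ of \eqref{eq:97} indexed by faces $f$ that do not contain $v$, and it also fixes the edge labels outside the closed star of $v$. It therefore suffices to identify $\FAA$ on the local bordism of Figure~\ref{fig:17}: the cylinder on the (blown-up) closed star of $v$---labelled as in Figure~\ref{fig:4}, with $\bA$-coloured hemisphere boundaries at the two ends---modified by gluing in the solid handle $[\tfrac1{16},\tfrac{15}{16}]\times D^2$ over the boundary circle of the excised disk at $v$, and removing the small ball $[\tfrac18,\tfrac38]\times D^2$ with antisphere boundary.

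Next I would slice this local bordism along the time levels $t=\tfrac1{16},\tfrac18,\tfrac38,\tfrac{15}{16}$ and along the radial edges of the star, and evaluate the pieces using Figures~\ref{fig:8}--\ref{fig:10}. The slab below $t=\tfrac1{16}$ (and the one above $t=\tfrac{15}{16}$) is a product on the blown-up star and contributes identities together with the nondegenerate pairings \eqref{eq:99} coming from the rectangles along the doubled radial edges, expressed through the spaces $\langle x_i,\vy_i,x_{i+1}\dual\rangle$. The passage across $t=\tfrac1{16}$, where the handle caps off and immediately reopens the disk at $v$, inserts---via the copairing dual to the perfect pairing \eqref{eq:99}---the summand $\langle x_i,\tx_i\dual,z\rangle\otimes\langle z\dual,\tx_i,x_i\dual\rangle$, with $z$ ranging over the simple labels of the meridian circle produced by the cap and $\tx_i$ over the labels of the newly created radial edges; this is precisely the first arrow of \eqref{eq:101}, the equality that follows being only the cyclic relabelling \eqref{eq:94}. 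The reassembly of the capped region onto the star---contracting $x_{i+1}\dual\otimes x_{i+1}$ and then $z\dual\otimes z$ by the pairing \eqref{eq:95}, using cyclic invariance \eqref{eq:94} to bring the factors into position---is the composite \eqref{eq:102}, i.e.\ the second arrow. Finally, the ball removed at $[\tfrac18,\tfrac38]$ contributes, by the very definition of the antisphere and the value $d(\sA)$ of a $\BA$-coloured $S^2$ recorded in the proof of Proposition~\ref{thm:58}, exactly the scalar $1/d(\sA)$: this is the last arrow.

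Assembling the slabs and summing over all faces of the closed star then yields the map \eqref{eq:101} verbatim, provided one checks that the $\bA$-coloured hemisphere caps at the two ends and the rectangles along doubled edges introduce no stray scalars; this uses the normalization of the canonical relative Frobenius structure (value $1$ on $\id_1\in\EA1$), the same normalization that made bordism~(vii) of Figure~\ref{fig:8} evaluate to $1$. One also checks independence of the placement of the handle and ball in the time direction and of the choice of starting radial edge, which follows from the diffeomorphism invariance of $\FAA$ and the commutation relation \eqref{eq:104} already established for the $\PYv$.

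The step I expect to be the main obstacle is the combinatorial bookkeeping around the capped vertex: keeping straight the three systems of labels---the original radial labels $x_i$, the new radial labels $\tx_i$, and the meridian label $z$---together with the boundary blocks $\vy_i$, and verifying that the chain of cyclic permutations and evaluation contractions imposed by the geometry produces exactly the composite \eqref{eq:102} and not its image under the duality involution $x\mapsto x\dual$. Pinning down the orientation conventions---which orientation of the meridian circle, and which member of each doubled radial edge carries $\ell(e)$ rather than $\ell(e)\dual$---is where the genuine care lies; the remainder is a routine unwinding of the calculus of \S\ref{subsec:9.2}.
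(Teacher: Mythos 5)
Your overall strategy is the right one---reduce by locality to a neighborhood of~$v$, evaluate the local bordism with the calculus of~\S\ref{subsec:9.2}, and let the antisphere account for the factor~$1/d(\sA)$ with the canonical relative Frobenius normalization absorbing everything else---and your locality reduction and your treatment of the last arrow of~\eqref{eq:101} agree with the paper. But the core step, producing the first two arrows of~\eqref{eq:101}, has a genuine gap. First, your time-slicing narrative does not match the geometry of~$\PYv$: the plug $[\frac1{16},\frac{15}{16}]\times D^2$ does not ``cap off and immediately reopen'' at $t=\frac1{16}$; it persists over the whole interval $[\frac1{16},\frac{15}{16}]$ (minus the antisphere ball), and the intermediate time-slices are surfaces with the disk at~$v$ filled in, which are not of the form $\FAA(Y')$ for a blown-up lattice, so the intermediate state spaces of your slicing are not directly the state-sum spaces you manipulate.

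Second, and more seriously, you do not explain why a \emph{single} label~$z$ is shared by all $n$~radial copairings. If each doubled radial edge at~$v$ independently contributed the copairing dual to~\eqref{eq:99}, you would obtain an independent summation variable~$z_i$ for each~$i$, which is a strictly larger space than the target of the first arrow of~\eqref{eq:101}; your appeal to ``the simple labels of the meridian circle'' cannot repair this, since labels of circles in the 3-dimensional theory are objects of $\FAA(\cir)=\sZ(\sA)$, not simple objects of~$\sA$. The paper's proof supplies exactly the missing mechanism by a \emph{spatial} rather than temporal decomposition: a thin vertical solid cylinder is bored through the plug from the bottom hemisphere at~$v$ to the top one, each doubled radial edge gives a thin vertical slab whose two polygonal faces evaluate to $\langle x_i,\tx_i\dual,z\rangle$ and $\langle z\dual,\tx_i,x_i\dual\rangle$ and whose evaluation is the coevaluation in the first line of~\eqref{eq:101}, the contractions of~\eqref{eq:102} arise from gluing along the incoming faces, and the bored solid cylinder---evaluated as a solid polygon in the dimensionally reduced 2-dimensional theory~$\TA$ of~\eqref{eq:127}, whose point value is~$\sA$---is precisely what forces all the vertical variables~$z$ to coincide. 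Without that piece (or an equivalent substitute in your slicing) the argument does not yield the operator~\eqref{eq:101}.
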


  \begin{proof}
 We sketch a proof based on Figure~\ref{fig:17} and our computations
in~\S\ref{subsec:9.2}.  The left drawing depicts a decomposition of a portion
of~$\PYv$.  A solid cylinder has been bored from the north pole of the bottom
hemisphere at~$v$ to the south pole of the top hemisphere.  Its boundary is
co-oriented to point into the faces containing~$v$.  Edges of~$\Lambda $ are
doubled, as in Figure~\ref{fig:14}.  Each face of~$ $appears in the
decomposition of~$\PYv$ bounded by vertical polygons as in the right drawing;
the pair~$(\FAA,\BA)$ maps that particular polygon to the vector
space~$\langle x\mstrut _i,\tx\dual_i,z \rangle$.  There is a thin vertical
``slab'' for each edge.  The vertical faces of the slabs are outgoing.  The
bottom face of~$\PYv$ is incoming and the top face outgoing, as in
Figure~\ref{fig:15}.  The value of~$(\FAA,\BA)$ on the thin vertical slabs
are the coevaluation maps in the first line of~\eqref{eq:101}.  The
contractions in~\eqref{eq:102} occur when evaluating the face along the two
shown vertical polygons, bottom face, and vertical cylinder, each of which is
incoming.  The first contraction is gluing along~$x_i$, the second along~$z$,
and the third along~$x_{i+1}$.  The top face being outgoing there are no
contractions with the tilde variables.  The vertical solid cylinder can be
evaluated as a solid polygon in the reduced theory~$\TA$ of~\eqref{eq:127}.
This enforces that all vertical variables~$z$ be equal.  Finally, the
antisphere induces the multiplication operator at the last step
of~\eqref{eq:101}.  
  \end{proof}

  \begin{example}[$\sA={\Vect[G]}$]\label{thm:50}
 Resuming Example~\ref{thm:48}, we claim \eqref{eq:101} projects onto
functions on $\bung{Y,\Vertices(\Lambda )}$ which are invariant under change
of trivialization at~$v$.  To see this, recall that each label is a group
element, note that $z\in G$~functions as the change of trivialization at~$v$,
and $\tx_i$ is determined by~$x_i$ and~$z$.  Applying to all~$v$ we obtain
projection onto the image of 
  \begin{equation}\label{eq:103}
     \Fun\bigl(\bung Y \bigr)\longrightarrow
     \Fun\bigl(\bung{Y,\Vertices(\Lambda )} \bigr). 
  \end{equation}
  \end{example}

  \subsection{The lattice boundary theory}\label{subsec:6.2}

The boundary theory depends on a \emph{fiber functor}, i.e., a tensor functor 
  \begin{equation}\label{eq:105}
     \phi \:\sA\longrightarrow \Vect_\CC.
  \end{equation}
Furthermore, we need the generalization of the Ising weight function
in~\S\ref{subsec:4.1}.  First, duality is an involution
on~$\mathcal{O}(\sA)$.  Choose representative simple objects $x_1,\dots ,x_N$
in~$\sA$ and duality data for dual pairs of objects.  The desired weight
function is an element
  \begin{equation}\label{eq:106}
     \theta \in \bigoplus\limits_{i=1}^N \phi (x_i)\otimes \phi (x_i)^*
  \end{equation}
which satisfies the following ``evenness'' constraint: the duality data
induces an involution of the vector space (which permutes terms), and we
require that $\theta $~be invariant.  We assume $\phi $~and $\theta $~are
specified.

  \begin{example}[]\label{thm:51}
 For $\sA=\Vect[G]$ the fiber functor~$\phi $ maps a vector bundle $W\to G$
to the vector space $\bigoplus_{g\in G}W_g$.  If $x\in \Vect[G]$ is a
skyscraper line supported at~$g\in G$, then $\phi (x)\otimes \phi (x)^*$ is
the trivial line~$\CC$, and we can identify the vector space
in~\eqref{eq:106} with $\Fun(G)$.  The constraint forces $\theta $~ to be an
even function, as in Definition~\ref{thm:18}.
  \end{example}

  \begin{example}[]\label{thm:52}
 For $\sA=\Rep(G)$ the vector space in~\eqref{eq:106} is the home of the
``Fourier transform'' of a function on~$G$; see~\eqref{eq:47}. 
  \end{example}

Observe that for any object~$x\in \sA$ there is a canonical isomorphism 
  \begin{equation}\label{eq:107}
     \Hom\bigl(1,\Hom(1,x)^*\otimes x \bigr)\xrightarrow{\;\;\cong \;\;}
     \Hom(1,x)^*\otimes \Hom(1,x) 
  \end{equation}
and a canonical element in the latter.  Hence for a latticed
surface~$(Y,\Lambda )$ there is a canonical map
  \begin{equation}\label{eq:108}
  \begin{aligned}
     1\longrightarrow &\bigoplus\limits_{\ell }\bigotimes\limits_f\;\langle
     \ell (e_n)\dual,\dots ,\ell (e_1)\dual \rangle^*\otimes \ell
     (e_n)\dual\otimes \cdots\otimes \ell (e_1)\dual \\  
     &\!\!\!\!\!\!\!\cong \bigoplus\limits_{\ell }\bigotimes\limits_f
     \;\;\;\;\,\langle
     \ell (e_1),\dots ,\ell (e_n) \rangle\;\;\;\otimes \ell (e_n)\dual\otimes
     \cdots\otimes \ell (e_1)\dual ,
  \end{aligned}
  \end{equation}
where $e_1,\dots ,e_n$ are the edges of the $n$-gon~$f$ and we use the
duality pairing~\eqref{eq:99}.  Apply the fiber functor to obtain a linear
map of vector spaces
  \begin{equation}\label{eq:109}
     \CC\longrightarrow \bigoplus\limits_{\ell }\bigotimes\limits_f\;\langle
     \ell (e_1),\dots ,\ell (e_n) \rangle\otimes \phi \bigl(\ell
     (e_n)\bigr)^*\otimes \cdots\otimes \phi \bigl(\ell (e_1) \bigr)^*.
  \end{equation}
For a fixed labeling~$\ell $, each unoriented edge $\be\in \Edges(\Lambda )$
appears in~\eqref{eq:109} twice, once with each orientation.  Pair the image
of~$1\in \CC$ under~\eqref{eq:109} with $\prod_{e\in \Edges(\Lambda )}\theta
$ to obtain a vector in the vector space~$\VAA(Y;\Lambda )$ of~\eqref{eq:97}.
(The constraint on~$\theta $ is needed for this pairing to be well-defined.)
Finally, apply the projection~$\FAA(\PY)$ of Lemma~\ref{thm:49} to obtain the
partition function of the boundary lattice theory, a vector in~$\FAA(Y)$.

  \begin{example}[]\label{thm:53}
 For $\sA=\Vect[G]$ this prescription reproduces~\eqref{eq:57}, as we encourage
the reader to verify. 
  \end{example}

   \section{Bicolored boundary structures}\label{sec:8}

Here, we give an abstract reformulation of our TQFT interpretation of the
Ising model and its correlators in the setting of \emph{fully extended}
topological field theories recalled in \S\ref{sec:2}: we will describe the
data of a $d$-dimensional lattice theory relative to a $(d+1)$-dimensional
topological field theory~ $\cF$ in the language of boundary and defect
structures. Unless otherwise specified, we will dwell in the world of
\emph{oriented} theories. 

As mentioned in the introduction, the formulation of Kramers-Wannier duality
in the context of a (nonextended) bicolored TQFT is anticipated in~\cite{S}.

\subsection{Bi-coloring via Morse functions.}\label{subsec:8.1a} 
 We assume at the outset that the $d$-manifold $Y$ ($d\ge 2$) carrying the
lattice theory is closed. Permitted cuts of $Y$ into cornered pieces will be
apparent after our reformulation: all faces of a cornered bordism must be
transverse to all boundary and defect structures.  Contact with the world of
extended TQFTs is made by converting a lattice on $Y$ into a handle
decomposition.  There is a standard way to do so, once we make the notion of
``lattice'' precise enough. We will take it to mean a \emph{piecewise linear}
decomposition $\Pi$ into convex polyhedra (not necessarily simplices),
compatible with the smooth structure. From this, one can build a Morse
function $f$ on $Y$ in a standard way, with one critical point on each
polyhedral face (including the $d$-dimensional ones). We will arrange for the
Morse function to be self-indexing, with vertices of degree $0$ and maxima of
degree $d$. The Morse function $d-f$ is then compatible, in the same way,
with the \emph{dual polyhedral decomposition}~ $\Pi^\vee$, which itself is
uniquely defined up to smooth isotopy.  These constructions are readily
carried out thanks to Whitehead's results \cite{Wh} on triangulations of
$C^1$ manifolds.

\label{colorcollar}
 The lattice theory on $Y$ being a boundary theory for $\cF$, the associated
TQFT picture is always based on a collar of $Y$ in some $(d+1)$-manifold; the
topological nature of $\cF$ confines all the information to a cylinder
$[0,1]\times Y$, with boundary $Y_0\amalg Y_1$, with $Y_0$ soon to be
decorated by boundary and defect structures. Its algebraic reading will
output a vector in the image $\cF(Y_1)$ of the smooth boundary, but the
incoming boundary $Y_0$ needs a preliminary change.

\begin{figure}[ht]
\centering
\includegraphics[scale=.9]{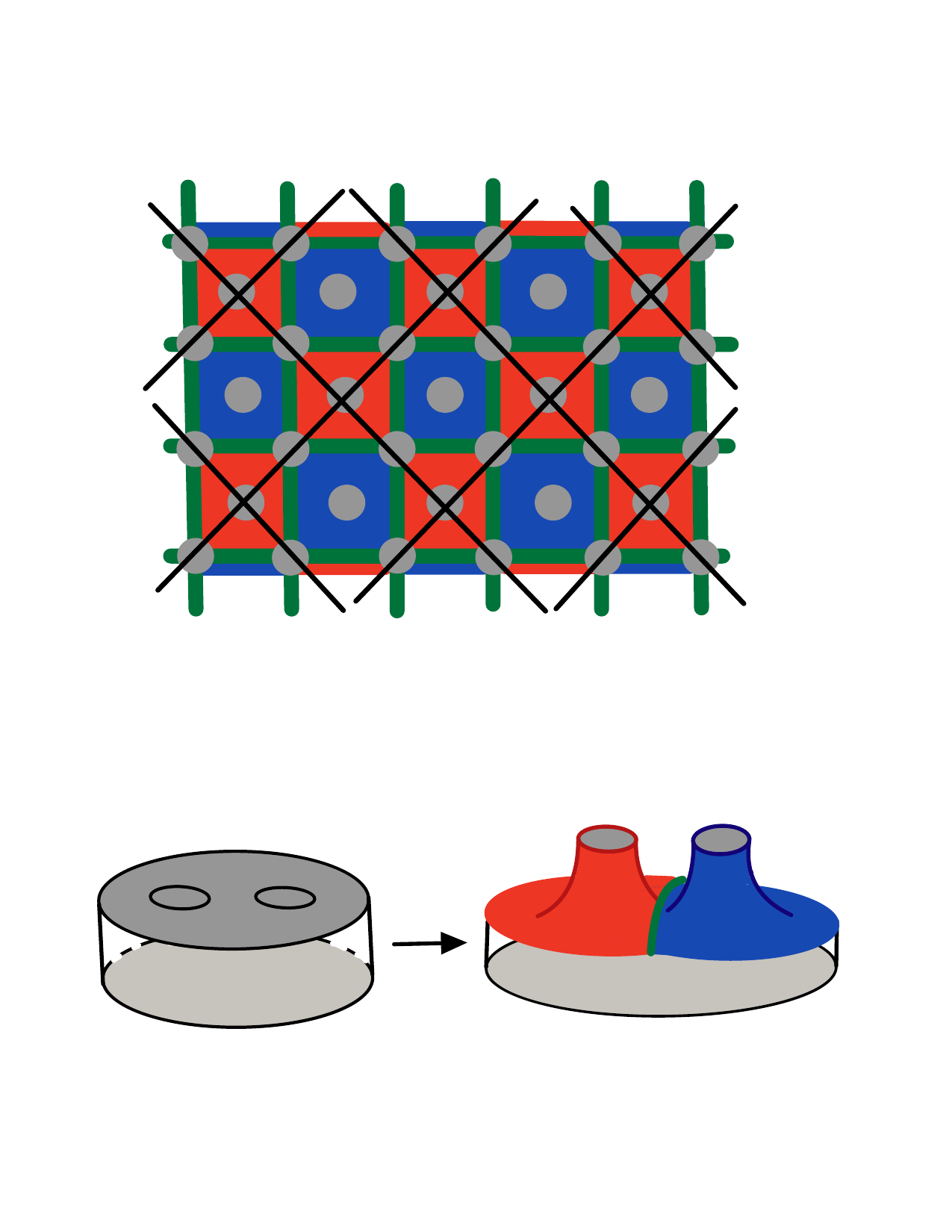}
\caption{Modifying $[0,1]\times Y$ into a cornered bordism}\label{corners}
\end{figure}

The Morse function $f$ gives a handle decomposition of $Y$, with a $p$-handle
$D^p\times D^q$ centered at each critical point $y$ of index $p$. Inside each
handle, mark out a smaller disk $D_y$, whose boundary is Hopf-split into two
solid tori $D^p\times S^{q-1}\cup S^{p-1}\times D^q$ glued along their
boundary torus $T_y=S^{p-1}\times S^{q-1}$, the level-set $f=p$ in $\partial
D_y$. We now deform the smooth structure on $[0,1]\times Y$ into a manifold
$X$ with corners of co-dimension $2$, converting each copy of $D_y\subset
Y_0$ into a face with corner $\partial D_y$; a complementary face is
$Y_0\setminus\coprod_yD_y=:Y_c$ (``$Y$ chromatic"), while the output $Y_1$
remains smooth and closed. The pair $(X, Y_c)$ is now a cornered bordism from
$\coprod_y(D_y,\partial D_y)$ to $(Y_1, \emptyset)$, as in
Figure~\ref{corners}.

We will use on $Y_c$ two colors (red and blue) to indicate boundary theories
$\rB,\rB'$ for $\cF$, as well as a collard green colored defect $\cD:\rB\to
\rB'$. Red and blue fill, alternately, the slices between critical level sets
$f=0,\dots ,d$, while green colors the level sets $f=1,\dots,d-1$.
Figure~\ref{sinsin} below gives an aerial view of the square lattice (in
grey) converted by means of the Morse function $\sin x\sin y$.

\begin{figure}[ht]
\centering
\includegraphics[scale=1]{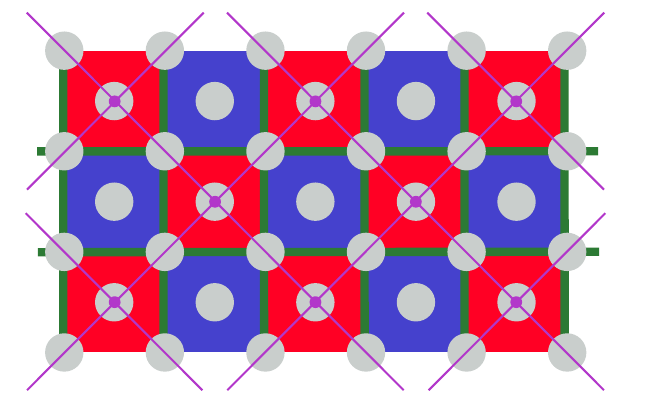}
\caption{Converting the square lattice to a bi-colored surface}\label{sinsin}
\end{figure}

\begin{remark}[Framings]
Recall that boundary theories, in our convention, are \emph{directed} from $1$ to $\cF$ \eqref{eq:3}. 
In the general setting of framed TQFTs, this convention picks the inward normal as a first vector of 
a $(d+1)$-framing on the red/blue parts, and a second vector tangent to the Morse flowlines on the 
green defect, pointing from red to green. We will specialize to oriented field theories, where we 
remember only orientations, with an orientation on the bulk now determines all the colored strata. 
Note, however, that when $d=2$, as for the Ising model, the Morse flow converts an orientation to a 
framing over $Y_c$, so in that dimension all the framing information of the theory is captured in 
these three handles.  
\end{remark}

The two solid tori in the boundary of each $p$-handle $D_y$ have opposite colors, assigned according to 
the parity of $p$, and separated by a green $T_y$. Thus colored, denote the boundary $\partial_cD_y$. 
Choose now a standard Euclidean model $D^{p,q}$ for this structured $D_y$; the 
quadruple $\cQ:=(\cF,\rB,\rB',\cD)$ associates to $(D^{p,q},\partial_cD^{p,q})$ a vector space $H_p$. 
This space, which plays a special role as a home of the Ising action below,  has a rich structure 
(already mentioned earlier, in \S\ref{frobhopf}); let us just mention its $E_p\times E_q$-bi-algebra 
structure, seen by writing $D^{p,q}=D^p\times D^q$ and multiplying in the $D^p$ and $D^q$ directions, 
respectively, and illustrated for $D^{1,1}$ in Figures~\ref{mult} and \ref{quadratics} below.

After coloring $Y_c$, $\cQ$ converts the bordism $X$ into a morphism
\begin{equation}\label{colory}
\cQ(X,Y_c): \bigotimes\nolimits_y \cQ(D_y,\partial_cD_y) \to \cF(Y).
\end{equation}
\begin{definition}\label{isingpart}
An \emph{Ising action} for $\cQ$ is a collection of vectors $\theta_p\in H_p$, each of which must 
be invariant under the oriented symmetry group $\mathrm{S}\left(\mathrm{O}(p)\times\mathrm{O}(q)\right)$ 
of $D^{p,q}$. The \emph{Ising partition function} in $\cF(Y)$ is the $\cQ(X,Y_c)$-image of their tensor 
product ranging over all handles. 
\end{definition}

\begin{remark} Since $H_p$ is a vector space and the action is topological,
only the components $\{\pm1\}$ of the symmetry group can act on it (and then
only if $0<p<d$). In a derived setting, there could be additional invariance
data from the continuous part of the group.  \end{remark}

Developing this, we introduce line defects of types $0\le p\le d$ for $\cF$,
allowed to end at point defects on $Y$. Locating the latter at critical
points of index $p$ allows us to promote them to ``order operators of type
$p$'' in the lattice theory as follows. A general line defect for $\cF$ is
labeled by an object in the category $\cF(S^{d-1})$. Such a defect requires a
normal framing; to collapse the framing information to $\mathrm{SO}(d)$, or
to another subgroup $S\subset\mathrm{O}(d)$, the label must be $S$-invariant.

\begin{remark} In the strict case, $S$ acts via its
$(\pi_0,\pi_1)$-truncation, represented on the automorphism $2$-group of the
category $\cF(S^{d-1})$. This involves a permutation action of $\pi_0S$,
along with a representation of $\pi_1 S$ by central automorphisms on each
object, with a natural compatibility. When $\pi_0=\{1\}$, an object is
invariant if $\pi_1$ acts trivially on it; otherwise, there is an additional
datum of trivializing the $\pi_0$-action on its orbit.  \end{remark}

For each $p$, a distinguished object $W_p$ is the $\cQ$-output at $1$ of the
cylinder $[0,1]\times\partial D^{p,q}$, with boundary at $0$ colored as
$\partial_cD^{p,q}$. Its construction ensures invariance under
$\mathrm{S}\left(\mathrm{O}(p)\times\mathrm{O}(q)\right)$.  An object $W_y$
is associated to each critical point $y$ of index $p$, identified with $W_p$
upon a standardization\footnote{Standardizations move under
$\mathrm{S}\left(\mathrm{O}(p)\times\mathrm{O}(q) \right)$; in the strict
setting, one is picked by orienting the descending Morse disk at $y$, the
lattice $p$-face.}  $D_y\cong D^{p,q}$.

\begin{definition}\label{abstractorderdef} A \emph{$p$-defect} is a line
defect whose label $\delta\in \cF(S^{d-1})$ lies in the subcategory linearly
generated by $W_p$, and which is invariant under
$\mathrm{S}\left(\mathrm{O}(p)\times\mathrm{O}(q)\right)$.  A \emph{$p$-order
operator at $y$} is a $p$-defect label $\delta$ plus a vector in
$\mathrm{Hom}(W_y,\delta)$.  \end{definition}

Thus defined, $p$-defects require $(p,q)$-splittings of the normal bundle;
other variants are possible. Note that
$H_p=\mathrm{Hom}(W_p,\mathbf{1})$. When $Y$ is marked with defects
$\delta_k$ at some critical points of matching index, we denote the space of
states $\cF(Y;\vec{\delta}\,)$.  A null-bordism of the defect within the
cylinder $X$ is converted by $\cF$ to a linear map $\cF(Y;\vec{\delta}\,)\to
\cF(Y)$.

\begin{proposition}[Correlators] With $p$-order operators placed at critical
points of matching index in $Y_c$, $\cQ$ determines a \emph{defective Ising
partition function} in $\cF(Y;\vec{\delta}\,)$. A null-bordism of the defect
maps it to the \emph{Ising correlator} in $\cF(Y)$.  \end{proposition}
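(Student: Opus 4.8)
The plan is to assemble the statement from the constructions already laid out in this section, since the \emph{Correlators} proposition is really a bookkeeping claim: it packages the definitions of $p$-order operators (Definition~\ref{abstractorderdef}), the colored bordism $X$ and its reading $\cQ(X,Y_c)$ \eqref{colory}, the Ising action $\theta_p\in H_p$ (Definition~\ref{isingpart}), and the null-bordism-induced map $\cF(Y;\vec\delta\,)\to\cF(Y)$. First I would set up the colored cornered bordism exactly as in \S\ref{colorcollar}, but now with a $p$-order operator datum $(\delta_k,\xi_k)$, $\xi_k\in\mathrm{Hom}(W_{y_k},\delta_k)$, inserted at each marked critical point $y_k$ of index $p_k$. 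The point is that the line defect $\delta_k$ emanates from $y_k$ into the bulk collar $[0,1]\times Y$; because $\delta_k$ lies in the subcategory generated by $W_{p_k}=\cQ([0,1]\times\partial D^{p_k,q_k})$ at $1$, the handle $D_{y_k}$ with its colored boundary $\partial_cD_{y_k}$ and the emanating defect now evaluates under $\cQ$ not to $H_{p_k}=\mathrm{Hom}(W_{p_k},\mathbf 1)$ but to $\mathrm{Hom}(W_{p_k},\delta_k)$ — this is the ``defective'' replacement of the handle vector space.

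Next I would observe that the reading \eqref{colory} of $(X,Y_c)$ still makes sense with these defective inputs: the defect lines are transverse to all the colored strata (they run along the Morse flow, from the critical point towards the smooth outgoing face $Y_1$), so the functoriality of $\cQ$ on cornered bordisms with defects applies verbatim and gives
\begin{equation*}
\cQ(X,Y_c)\:\bigotimes\nolimits_{y\text{ unmarked}}H_{p_y}\;\otimes\;\bigotimes\nolimits_{k}\mathrm{Hom}(W_{y_k},\delta_k)\longrightarrow \cF(Y;\vec\delta\,),
\end{equation*}
where the target now carries the defect labels at the marked critical points. Feeding in the Ising action vectors $\theta_{p_y}\in H_{p_y}$ at unmarked handles and the order-operator vectors $\xi_k$ at marked ones produces the vector we call the \emph{defective Ising partition function} $\mathbf Z(\cF,\theta,\Lambda,\vec\delta\,)\in\cF(Y;\vec\delta\,)$. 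One must check well-definedness under the choices of standardizations $D_y\cong D^{p,q}$: these move under $\mathrm{S}(\mathrm{O}(p)\times\mathrm{O}(q))$, but the $\theta_p$ are by hypothesis invariant under this group, and the $W_p$, $W_y$ carry matching invariance data, so the construction is independent of the choices.

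For the second sentence: a null-bordism $N$ of the total defect $\coprod_k\delta_k$ inside the cylinder $X$ is, by the last line before the proposition, converted by $\cF$ to a linear map $\cF(Y;\vec\delta\,)\to\cF(Y)$; applying it to $\mathbf Z(\cF,\theta,\Lambda,\vec\delta\,)$ gives a vector in $\cF(Y)$, which we declare to be the Ising correlator. The content here is merely that this is the correct categorical avatar of the lattice-theoretic correlator — in the case $\cF=\cG_G$, $d=2$, this reduces to \eqref{eq:134}, as already established in \S\ref{subsec:4.3} and \S\ref{subsec:4.4}, so no new verification is needed beyond tracing through the identifications. The main obstacle, and the only genuinely non-formal point, is the \textbf{well-definedness of the null-bordism construction}: different null-bordisms $N$, $N'$ of the same defect are related by a higher bordism, and one must know that $\cF$ is defined (as a fully extended theory, via the cobordism hypothesis) on enough of the bordism $n$-category — with the colored boundary and defect structure of \cite{L}, \cite{JFS} — that this higher bordism induces an equality of the two maps; equivalently, as the \emph{Caution} in \S\ref{defectcancel} warns, in the non-abelian setting the pair of adjoint maps attached to a null-bordism need not be mutually inverse, so one should be careful to state the correlator as depending (up to the inherent normalization ambiguity) only on the homotopy class of null-bordism, or to fix a standard choice. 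I would handle this by invoking the extended cobordism hypothesis for boundaries and defects \cite[\S4.3]{L} to reduce the comparison of $N$ and $N'$ to the dualizability data already present in $\cQ$, which is exactly the $(d-p)$-dualizability built into Definition~\ref{thm:1}.
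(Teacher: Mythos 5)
Your construction is essentially the paper's own: the authors spell out the proposition in \S\ref{subsec:8.2a} by boring a hole through the collar $X$ at each marked critical point, so that the handle contributes the tensor factor $\mathrm{Hom}(W_y,\delta)$ in place of $H_p=\mathrm{Hom}(W_p,\mathbf 1)$, contracting with the chosen vectors to land in $\cF(Y;\vec\delta\,)$, and then applying the map induced by a null-bordism. Your closing worry about independence of the null-bordism is moot for the statement as given --- the paper explicitly records that the correlator \emph{does} depend on the null-bordism (only its homotopy class matters), so no appeal to higher dualizability data is needed.
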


We spell this out in the next subsection; for the sake of definiteness, we
restrict ourselves to the case $d=2$, with the electromagnetic Wilson and
't~Hooft defects, of indices $0$ and $2$, respectively. There is then no
symmetry ambiguity in standardizing the vector labels.

\subsection{The $d=2$ Ising model and beyond}\label{subsec:8.2a} 

\subsubsection{Field theories from tensor categories}
 In specializing to $d=2$, we make the simplifying assumption that the spaces
$H_0$ and $H_2$ are $1$-dimensional. While motivated by our interest in gauge
theory $\cG_G$ and the extreme, Dirichlet and Neumann boundary conditions
associated to the subgroups $H=\{1\}$ and $H=G$ (\S\ref{subsec:3.1}), this
property really is characteristic of theories $\cF$ defined by fusion
categories $\mathscr{T}$ with boundary theories defined by indecomposable
module categories.\footnote {Larger spaces appear in the case of multi-fusion
$\mathscr{T}$.}  Indeed, recall from \S\ref{subsec:6.2} that the $2$-disk in
the theory $\cF_\mathscr{T}$ with boundary colored by $\mathscr{M}$ yields
the space $\mathrm{End}_\mathscr{E}(\mathbf{1})$, in the category
$\mathscr{E}=\mathrm{End}_\mathscr{T}(\mathscr{M})$, and $\mathbf{1}$ is
simple \cite[\S7.12]{EGNO}.  The components $\theta_{0,2}$ of index $0$ and
$2$ of the Ising action are just scalars, affecting the Ising partition
function by an overall factor; we will set them to $1$ and concentrate on
$\theta:=\theta_1$ and on the defects at the other critical points, placed at
vertices and faces of the original lattice.

The gauge theory example features a \emph{complementarity} of the Dirichlet
and Neumann boundary structures $\rB,\rB'$: reducing $\cF$ along an interval
with blue and red marked endpoints give the trivial $2$-dimensional theory, a
fancy way to say that $G$-bundles on the interval trivialized at one end but
free at the other are canonically trivialized.  This is not used for much the
discussion that follows, up to the point of duality. For a theory
$\cF_\mathscr{T}$ generated by a fusion category $\mathscr{T}$ it allows us
to Morita transform our quadruple $\cQ$ in two different ways, to one
containing the regular boundary condition and to one containing a fiber
functor.  It is in these settings that we use the language of Wilson and
't~Hooft defects, interchanged under the electromagnetic duality defined by
the Morita equivalence.

 \begin{remark}\label{remark8.6}
One can ask at this stage to what extent one can abstractly replicate the generating condition of 
Remark~\ref{gen}. Recall \cite{FT1} that $\cF$ is a functor from a bordism $3$-category to some 
symmetric monoidal $3$-category $\mathscr{C}$, and $\rB, \rB': \mathbf{1}\to \cF$ are (truncated) 
natural transformations from the unit functor. ``Generating'' 
means that the algebra objects $\mathscr{E}=\mathrm{End}_{\cF(\ptt)}\left(\rB(\ptt)\right)\in 
\mathrm{End}_\mathscr{C}
(\mathbf{1})$ and its primed version should each generate a TQFTs isomorphic to $\cF$, with equivalences 
induced by the bi-module objects $\rB,\rB'$, respectively.\footnote{Pictorially, $\cE, \cE'$ are 
represented by the interval in theory $\cF$ capped with the boundary condition $\rB,\rB'$ at 
both ends, and multiplication defined geometrically by a fat Y graph.} This statement requires interpretation: 
for general $\mathscr{C}$, the algebra objects $\mathscr{E}, \mathscr{E}'\in\mathrm{End}_\mathscr{C}
(\mathbf{1})$ need not have the same nature as the generating object $\cF(\ptt) \in\mathscr{C}$, 
and the field theories may only be compared on $\mathrm{Bord}_{<1,2,3>}$. 
\end{remark}

\begin{figure}[ht]
\centering
\includegraphics[scale=1]{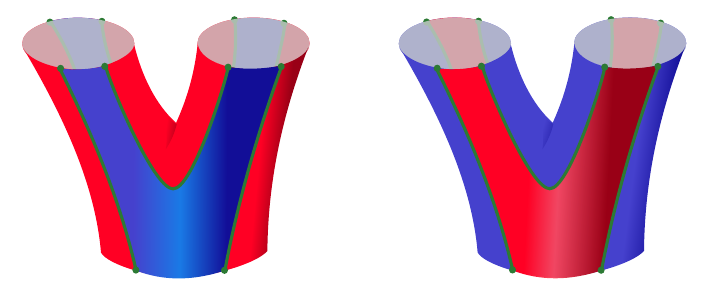}
\caption{The two multiplications (read down) and comultiplications (read up) on $H_1$.}\label{mult}
\end{figure}

\begin{figure}[ht]
\centering
\includegraphics[scale=1]{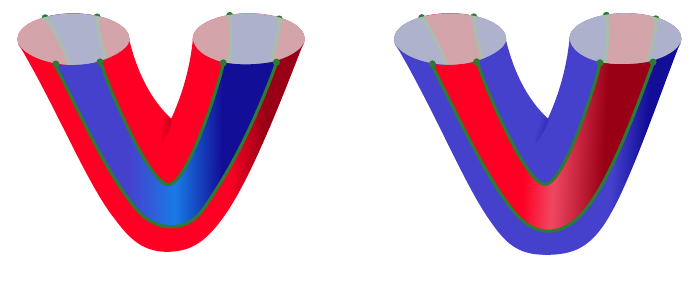}
\caption{The two Frobenius quadratic forms, differing by the antipode}\label{quadratics}
\end{figure}

\subsubsection{Relation to Hopf algebras} \label{computeH1} 
 We determine the gauge theory space $H_1 = \cQ(D_{1,1},\partial_cD^{1,1})$
from the classical model of $\cG_G$: the groupoid of bundles on $D^{1,1}$
trivialized on two arcs is equivalent to the set $G$ of monodromies, so $H_1$
is the space of functions on $G$.  (See also Example~\ref{thm:44}.)  This is
a Hopf algebra, with two Frobenius structures for the two operations, which
differ by the antipode. (The Frobenius form for the convolution algebra pairs
$g$ with $g^{-1}$, while multiplication is matched with the pointwise
pairing.) The Frobenius-Hopf operations have geometric interpretations, and
products can be interchanged with co-products by means of the Frobenius
forms, in a way that matches the switch from $H_1$ to its dual Hopf
algebra. The geometric construction of the operations is shown in
Figures~\ref{mult} and \ref{quadratics}. As the picture shows, the two
Frobenius quadratic forms on $H_1$ do not agree, but differ by a reflection
about the origin on (either) one of the factors: a quarter-rotation that
identifies the two sides of Figure~\ref{quadratics} takes opposite signs on
the two input boundaries. This reflection, the $\mathrm{O}(1)$-symmetry of
the handle, implements the antipode $S$.  For $\mathscr{V}ect[G]$ with
Dirichlet (red) and Neumann (blue) boundary structures, the left picture in
Fig.~\ref{mult} gives the group ring, while the right one defines the
commutative multiplication on $\bC[G]$.

This Frobenius-Hopf property is not an accident: every fusion category
$\mathscr{T}$ with a fiber functor $\phi:\mathscr{T}\to \mathscr{V}ect$ is
equivalent to the tensor category of modules over a finite semi-simple Hopf
algebra, by a Tannakian reconstruction theorem of Hayashi \cite{Ha} and
Ostrik \cite{Os}.  Moreover, the Koszul dual category
$\mathscr{E}:=\mathrm{End}_\mathscr{T}(\mathscr{V}ect)$ is the tensor
category of co-modules; equivalently (in light of finiteness) the category of
modules over the dual Hopf algebra. (The multi-fusion case is addressed by
the related notion of a weak Hopf algebra, see for instance
\cite[Ch.7]{EGNO}.) Thus, the most general setting of our story pertains to
finite, non-commutative gauge theory.

A full pictorial account of this TQFT-Hopf correspondence is planned for
\cite{FT2}; here, we merely check the requisite ``Peter-Weyl" decompositions
of $H_1$, which will ensure that we have the correct algebra in the Tannakian
reconstruction theorem: \begin{proposition} Let $x_i$ be a basis of simple
objects of $\mathscr{T}$ and $y_j$ one for $\mathscr{E}$. Then, as algebras
for the left and right multiplications in Fig.~\ref{mult}, \[ H_1\cong
\bigoplus\nolimits_i \phi(x_i)\otimes \phi(x_i)^\vee; \qquad H_1\cong
\bigoplus\nolimits_i \phi(y_i)\otimes \phi(y_i)^\vee.  \]
\end{proposition}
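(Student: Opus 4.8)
The plan is to identify $H_1 = \cQ(D^{1,1},\partial_cD^{1,1})$ explicitly as a vector space equipped with its two algebra structures, and then to recognize each product as a direct sum of matrix algebras indexed by simple objects, using the regular boundary theory computations of \S\ref{subsec:9.2} and \S\ref{subsec:6.1}. First I would fix the two Morita-transformed pictures of the quadruple $\cQ$ discussed just above: one containing the regular (Dirichlet) boundary theory $\bA$ for the category $\mathscr{T}$, and one containing the regular boundary theory for the Koszul dual $\mathscr{E}$. In each picture, the colored disk $D^{1,1}$ becomes a $1$-handle whose two attaching arcs carry the regular boundary condition, so $\cQ(D^{1,1},\partial_cD^{1,1})$ is computed as $\FAA$ applied to a thickened ``bigon'' with both red arcs colored by $\bA$. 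Cutting this as in Figure~\ref{fig:14}, the bigon with its two doubled edges evaluates, for each pair of simple labels $(x,x^\vee)$ running over a chosen set $\{x_i\}$, to $\langle x^\vee, x\rangle \otimes \langle x, x^\vee\rangle$; under the fiber functor $\phi$ (which is what distinguishes this $1$-handle computation in the $\cF_\mathscr{T}$ language from the purely categorical one), the pairing identifies $\langle x^\vee,x\rangle \cong \HA(1,x^\vee\otimes x)$ with $\phi(x)^\vee\otimes\phi(x)$ up to the duality data, and summing over $i$ gives $H_1 \cong \bigoplus_i \phi(x_i)\otimes\phi(x_i)^\vee$ as a vector space.

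Next I would pin down the algebra structure. The left multiplication of Figure~\ref{mult} is $\cQ$ applied to the pair-of-pants shaped cornered bordism that glues two copies of $D^{1,1}$ along a red arc; geometrically this is exactly the $D^p$-direction multiplication on the $1$-handle, i.e.\ composition of morphisms in the regular boundary category. Concretely, on the summand labeled by $x_i$ the bordism implements the evaluation $\phi(x_i)^\vee \otimes \phi(x_i) \to \CC$ in the middle and leaves the outer $\phi(x_i)\otimes\phi(x_i)^\vee$ factors, which is precisely matrix multiplication on $\operatorname{End}(\phi(x_i))$; distinct $x_i$ do not interact because a nontrivial hom-space $\HA(x_i, x_j)$ vanishes for $x_i\not\cong x_j$ simple. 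Hence $H_1 \cong \bigoplus_i \operatorname{End}(\phi(x_i)) = \bigoplus_i \phi(x_i)\otimes\phi(x_i)^\vee$ as algebras for the left product. Running the identical argument in the Morita-dual picture with $\mathscr{E} = \operatorname{End}_\mathscr{T}(\mathscr{V}ect)$ in place of $\mathscr{T}$, and noting that the right multiplication of Figure~\ref{mult} is the $D^q$-direction product, which becomes the left (composition) product after the Morita transform swapping the roles of the two colors, yields $H_1 \cong \bigoplus_j \phi(y_j)\otimes\phi(y_j)^\vee$ as algebras for the right product, with $\{y_j\}$ a basis of simples of $\mathscr{E}$. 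I would also verify that these two decompositions are compatible with the single underlying vector space via the two Frobenius forms of Figure~\ref{quadratics}, which differ by the antipode (the $\O(1)$-reflection of the handle) — but for the statement as phrased only the algebra isomorphisms are needed.

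The main obstacle, and the place I would spend the most care, is the bookkeeping of the fiber functor and the duality data in the cut-and-paste evaluation: one must check that the isomorphism $\langle x_i^\vee, x_i\rangle^* \otimes \langle x_i, x_i^\vee\rangle \to \operatorname{End}(\phi(x_i))$ coming from \eqref{eq:99}, \eqref{eq:107}, and $\phi$ really intertwines the bordism-induced product with genuine matrix multiplication, rather than its opposite or a twist by $\dim(x_i)$-type scalars. This is exactly the kind of coherence that \S\ref{subsec:9.2} was set up to handle — the relevant normalizations are the canonical relative Frobenius structure and the antisphere factor $d(\sA)^{-1}$ — so I would reduce the check to the pictorial identities of Figures~\ref{fig:8}–\ref{fig:10} applied inside the $1$-handle, where the solid-cylinder piece is evaluated in the reduced $2$d theory $\TA$ as in Proposition~\ref{thm:57}. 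The second, milder subtlety is making precise that ``the right multiplication becomes the left multiplication after Morita transform'' — this is the content of electromagnetic/Koszul duality $\mathscr{E}' = \operatorname{End}_\mathscr{E}(\mathbf{1})$, and I would invoke Proposition~\ref{thm:4}/\S\ref{subsec:3.5} and Remark~\ref{remark8.6} rather than re-prove it, so that the second displayed isomorphism follows from the first applied to the dual theory $\cF'$.
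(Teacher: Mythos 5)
There is a genuine gap at the very first step---your identification of $H_1$ as a vector space---and it propagates through the rest of the argument. You compute $\cQ(D^{1,1},\partial_cD^{1,1})$ as $\FAA$ applied to a ``bigon with both red arcs colored by $\bA$'' and then try to reinstate the fiber functor afterwards via the claimed isomorphism $\langle x\dual,x\rangle=\HA(1,x\dual\otimes x)\cong\phi(x)\dual\otimes\phi(x)$. That isomorphism is false: for a simple object $x$ of a fusion category, $\HA(1,x\dual\otimes x)\cong\EA{x}$ is one-dimensional, whereas $\phi(x)\dual\otimes\phi(x)$ has dimension $(\dim\phi(x))^2$; the fiber functor embeds the former into the latter but is not surjective unless $\phi(x)$ is a line. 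Concretely, your state sum produces a space whose dimension is the number of simple objects of $\sA$, while $\bigoplus_i\phi(x_i)\otimes\phi(x_i)\dual$ has dimension $\sum_i(\dim\phi(x_i))^2$; for $\sA=\Rep(G)$ with $G$ nonabelian these differ (number of conjugacy classes versus $\#G$), which is precisely the case the proposition exists to handle. The source of the error is that the two blue (Neumann/fiber-functor) arcs of $\partial_cD^{1,1}$ must enter the evaluation of the disk itself---the disk is a cyclic composite of the bimodules $\sA$ and $\Vect$ across the four green defect points, as in Example~\ref{thm:44}---and cannot be dropped and then ``applied'' to the hom-spaces of an all-red computation. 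Your algebra-structure and Morita-swap paragraphs are built on this identification and inherit the problem, though their strategic outline (left product $=$ composition, hence a sum of matrix algebras; color swap for the second decomposition) is sound.

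The paper's proof sidesteps the state sum entirely: it cuts the disk across the two red arcs, identifies the resulting half-disk with the fiber functor $\phi\:\sA\to\Vect$ by a three-step factorization through $\Hom_{\cF}(\rB,\rB')\simeq\Vect$ (Figure~\ref{factordisk}), and then reads the full disk as $\phi$ precomposed with its adjoint, so that $H_1\cong\phi\bigl(\phi\dual(\CC)\bigr)$ with $\phi\dual(\CC)=\bigoplus_i x_i\otimes\phi(x_i)\dual$; both Peter--Weyl decompositions, together with their algebra structures for the two products, then fall out at once. If you prefer to keep a cut-and-paste computation, you must decompose the genuinely bi-colored disk, in which the label $x_i$ contributes $\Hom_{\Vect}\bigl(\CC,\phi(x_i)\dual\otimes\phi(x_i)\bigr)$ rather than $\HA(1,x_i\dual\otimes x_i)$.
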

\begin{remark}
The two identifications are related by a $\pi/4$ rotation of $\partial_cD^{1,1}$, which implements the 
Fourier transform, interchanging $\theta \leftrightarrow \theta^\vee$ in the duality below. 
\end{remark}

\begin{figure}[ht]
\centering
\includegraphics[scale=1.2]{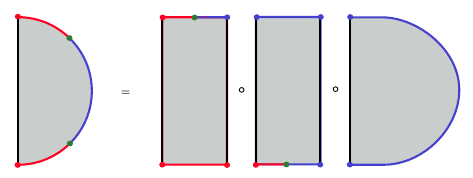}
\caption{The fiber functor}\label{factordisk}
\end{figure}

\begin{proof} The interval with two red endpoints computes, under $\cQ$, the
category $\mathscr{T}$ (cf.~Remark~\ref{thm:55}).  We claim that the
half-disk cut across the two red arcs represents the fiber functor; the
theorem then follows by pre-composing this with its adjoint. The claim
follows from the three-step factorization in Figure~\ref{factordisk}, which
splits it into the fiber functor $\phi$ landing in $\mathscr{V}ect\simeq
\mathrm{Hom}_\cF(\rB,\rB')$; its adjoint, the inclusion of
$\phi^\vee(\mathscr{V}ect)\subset\mathscr{E}$; and finally the application of
$\mathrm{Hom}_{\mathscr{E}}(\mathbf{1},\_\,)$.  Because $\phi^\vee(\bC) =
\bigoplus_j y_j\otimes \phi(y_j^\vee)$, the final two steps compose to the
fiber functor.
\end{proof}

\subsubsection{Order/disorder operators}
 Line defects in $3$D TQFT were described in \S\ref{subsec:3.3}, where
$\mathscr{V}ect[G]$ can be replaced by a general field theory $\cF$: the
cited discussion converts the ``line operator'' language into that of
bordisms with corners. We specialize to $0$- and $2$-defects.  In the theory
$\cF_\mathscr{T}$ with the regular boundary condition for $\rB$ and the fiber
functor for $\rB'$, these are the Wilson and 't~Hooft defects; for the Hopf
algebra $H_1$, these are the modules and co-modules, respectively. We now
explain how the promotion data in Definition~\ref{abstractorderdef} leads to
the defective Ising partition function (and correlators).

Repeat the cylinder construction of \S\ref{colorcollar}, with additional
holes bored through $X$ at each defect. This creates a white annulus on the
input face and a circular edge on the output $Y_1$. As we convened to insert
the unit at maxima and minima, the respective defect-free disks there may be
capped by colored hemispheres.  We read the picture with the white annulus
boundary as incoming, ready to absorb the defect label $\delta$. The other
annulus boundary is colored, and the result produces the vector space
$\mathrm{Hom}_\mathscr{T}(W_y,\delta)$ as a new tensor factor in the domain
of the map $\cQ(X,Y_c)$ in \eqref{colory}.  An \emph{order operator} (if
$p=0$) or \emph{disorder operator} ($p=2$) is a vector in the respective
$\mathrm{Hom}$-space at each defect point; contracted with $\cQ(X,Y_c)$, it
gives the Ising partition function in $\cF(Y_c,\vec{\delta}\,)$.  This is the
defective Ising partition function. A null-bordism of the defect maps it to
the correlator in $\cF(Y)$. The latter depends on the order/disorder
insertions as well as the null-bordism of the underlying defect.

\subsubsection{The Ising state space}
 Conversion to extended TQFT language indicates the way of factorizing Ising
theories (as relative theories) on manifolds with corners. The basic rule is
that any time-cut must be transversal to all structures.  For instance, we
can associate an object in the center category $Z(\mathscr{T})$ to any circle
cutting $Y_c$ through red and blue faces and across the green defect
lines. (We are unable to cut across the white disks, which have already
absorbed a vector that does not normally factor appropriately.)  When
translated to the lattice, this cutting seem counter-intuitive, as it
describes paths which zig-zag between vertices and face centers on the
lattice: neither of the obvious cutting methods --- along the edges or across
the edges --- is allowed, from the TQFT perspective.

The resulting circle comes subdivided into red and blue arcs. Pairing with an
object in $Z(\mathscr{T})$ yields an Ising space of states. Thus, in the
original Ising model, we thus get four spaces, one for each pair consisting
of a holonomy in $\bmuu_2$ and an irreducible $\bmuu_2$-representation. For
general $G$, we get one space for each conjugacy class and irreducible
representation of the stabilizer. These spaces also have natural inner
products, because of the reality and orientation-free nature of the theory.
This natural inner product seems to be an advantage of the TQFT cutting over
the obvious one (where correction by the square root of the action is
needed). The following relies on the identification
\cite[Proposition~7.14.6]{EGNO} of $Z(\mathscr{T})$ with the tensor category
of modules over the Drinfeld double algebra $D(H_1)$.

\begin{proposition} The Ising space of states for the circle with $N$
red/blue interval pairs is the free module $D(H_1)\otimes H_1^{\otimes
(n-1)}$ over the Drinfeld double, where $D(H_1)$~acts on the first factor
only.  \end{proposition}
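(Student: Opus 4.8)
The plan is to compute the Ising state space directly from the colored-surface picture, using the handle/Morse presentation of the circle and the factorization rules for the relative theory $\cQ=(\cF_\mathscr{T},\rB,\rB',\cD)$. The relevant geometric object is $[0,1]\times S^1_c$, where $S^1_c$ is a circle subdivided into $N$ red/blue interval pairs, separated by $2N$ green defect points, with no white disks (we cannot cut across those). By the bi-coloring recipe of \S\ref{subsec:8.1a}, this circle is a handle decomposition with $N$ minima (index-$0$, red $1$-handles $D_y$) and $N$ maxima (index-$d$, blue); the green level sets sit between. So $\cQ$ reads the cylinder on $S^1_c$ as a composition of $N$ copies of the ``elementary'' colored cylinder piece built from $[0,1]\times\partial D^{1,1}$, glued along their colored boundary arcs, and the resulting object of $\cF(S^1)=Z(\mathscr{T})$ is what we must identify with $D(H_1)\otimes H_1^{\otimes(N-1)}$.

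First I would set up the two basic building blocks. The object $W_1=\cQ\bigl([0,1]\times\partial D^{1,1}\bigr)\in Z(\mathscr{T})$ attached to a single red/blue interval pair — one critical point of index $1$, i.e.\ one factor of $H_1=\mathrm{Hom}(W_1,\mathbf{1})$ — should, under the identification $Z(\mathscr{T})\simeq D(H_1)\text{-mod}$ of \cite[Prop.~7.14.6]{EGNO}, be the free rank-one module $D(H_1)$: this is the ``regular'' line defect generated by the boundary theories, and its underlying object is $H_1$ as a vector space, carrying the obvious $D(H_1)$-action built from the Frobenius-Hopf operations of Figures~\ref{mult} and~\ref{quadratics} (left/right multiplication give the $H_1$ and $H_1^\vee$ actions, which together assemble into a $D(H_1)$-module structure). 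The second block is the gluing map: joining two such pieces along a colored arc corresponds, on the level of objects in $Z(\mathscr{T})$, to the tensor product in the Drinfeld double picture, and the key computation is that gluing the red end of one piece to the blue end of the next contributes a factor of $H_1$ but \emph{not} a full $D(H_1)$, since the gluing interval is monochrome-to-monochrome and only carries the module structure on one side.

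The main steps in order: (1) identify $\cQ$ applied to the elementary colored annulus-with-one-handle as $D(H_1)$, using the Frobenius-Hopf structure on $H_1$ computed in \S\ref{computeH1} and the identification of $Z(\mathscr{T})$ with $D(H_1)$-modules; (2) show that composing $N$ such pieces around the circle produces the module $D(H_1)\otimes H_1^{\otimes(N-1)}$, with the $D(H_1)$-action concentrated on the first tensor factor — here the point is that only one handle ``sees'' both the red and blue boundary structures in a way that survives closing up the circle, while the remaining $N-1$ handles each contribute a bare copy of $H_1$ via gluing along a single colored arc; (3) check that the resulting module is free of the stated rank, which is immediate once (1) and (2) are in place since $\dim D(H_1)=(\dim H_1)^2$ and the total dimension of $\cF(S^1_c)$ matches $\sum$ over labelings. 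For (2), I expect the cleanest argument to run through the general fact that $\cQ$ on a colored cylinder is computed by the reduced theory, reducing the $3$-dimensional computation to a $2$-dimensional Frobenius-algebra calculation in $\TA$-style language as in \S\ref{subsec:9.2}.

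The hard part will be step (2): tracking exactly which gluings produce a full $D(H_1)$ worth of structure and which produce only $H_1$, i.e.\ making precise the asymmetry between the ``first'' factor and the rest. Concretely, the subtlety is that around a closed circle all $N$ handles are on equal footing, yet the answer singles out one factor; the resolution must be that the $D(H_1)$-action on $\cF(S^1_c)$ comes from the $Z(\mathscr{T})$-module structure (equivalently, the action of line defects wrapping the circle), and this acts diagonally in a way that, after choosing how to present the tensor product, concentrates on one $H_1$. Pinning this down carefully — ideally by an explicit isotopy of the colored cylinder, sliding defect points until $N-1$ of the handles are ``capped off'' into bare $H_1$ factors and a single $D(H_1)$ remains — is where the real work lies; the Peter--Weyl decompositions of $H_1$ from the Proposition just before this statement are what guarantee the bookkeeping closes up consistently.
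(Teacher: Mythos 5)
There is a genuine gap, and it sits exactly where you say the ``real work lies'': your step (2) is not an argument but a restatement of the difficulty, and the heuristics you offer for it do not survive a dimension count. If each of your $N$ elementary pieces really evaluated to $D(H_1)$ and composition around the circle were any standard gluing (tensoring over $\bC$, or relative tensoring over $H_1$ along each of the $N$ colored gluing arcs), you would get total dimension $(\dim H_1)^{2N}$ or $(\dim H_1)^{N}$, neither of which matches the asserted $(\dim H_1)^{N+1}$ (for $\mathscr{T}=\mathscr{V}ect[G]$ one checks directly, by counting bundles on the annulus trivialized over the $N$ red arcs, that the answer has dimension $\#G^{\,N+1}$). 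There is also a set-up slip in step (1): $\partial_cD^{1,1}$ carries \emph{two} red/blue pairs, so $\cQ\bigl([0,1]\times\partial_cD^{1,1}\bigr)$ is the $N=2$ case of the very statement you are proving (namely $D(H_1)\otimes H_1$), not the one-pair building block $D(H_1)$; conflating these makes $H_1=\mathrm{Hom}(W_1,\mathbf 1)$ come out wrong.

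The missing idea is that the decomposition should \emph{not} treat the $N$ handles symmetrically and should not stay inside $Z(\mathscr{T})$ at each stage. The paper cuts the colored annulus open into a linear chain of pieces each of which is a morphism in $\mathrm{Hom}(\rB,\rB')\simeq\mathscr{V}ect$: the first cut yields the object $\bC$, and each intermediate piece merely tensors with the bare vector space $H_1$ --- no $D(H_1)$-module structure is present at these stages, which is why $N-1$ of the factors end up as plain copies of $H_1$. The double appears only once, at the final closing-up step, which is the morphism $\mathrm{Hom}(\rB,\rB')\to Z(\mathscr{T})$ given by the adjoint of the fiber functor on $Z(\mathscr{T})$; the adjoint of the forgetful functor sends $\bC$ to the free rank-one module $D(H_1)$, producing $D(H_1)\otimes H_1^{\otimes(N-1)}$ with the action on the first factor only. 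This also dissolves your worry about the cyclic symmetry: the asymmetry is an artifact of where the final cut is made (the paper's subsequent remark notes explicitly that the presentation breaks the $\ZZ/N$ symmetry), so no isotopy argument ``capping off'' $N-1$ handles is needed or available. To repair your proof you would need to replace your symmetric gluing scheme by such a cut-open factorization, or else supply a genuinely new mechanism by which $N$ copies of $D(H_1)$ glue to something of dimension $(\dim H_1)^{N+1}$.
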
 \begin{proof} The space, as a module over $D(H_1)$,
is the output of $[0,1]\times S^1$ with the circle at $0$ subdivided into the
$2N$ arcs. We compute it via the picture in Figure~\ref{isingspace}, by
factoring it successively via the categories
$\mathrm{Hom}(\rB,\rB')_\simeq\mathscr{V}ect$. The cutting lines for the
factorization are as indicated. The first cut yields the object $\bC$, which
is then tensored with the vector space $H_1$ at every step. The final
morphism $\mathrm{Hom}(\rB,\rB') \to Z(\mathscr{T})$ is the adjoint of the
fiber functor on $Z(\mathscr{T})$, and produces the double $D(H_1)$ as a
module over itself, times $H_1^{\otimes (N-1)}$.  \end{proof}

\begin{figure}[ht]
\centering
\includegraphics[scale=1.4]{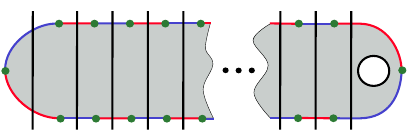}
\caption{Computation of the Ising space of states}\label{isingspace}
\end{figure}

\begin{remark} This presentation breaks the cyclic $\zmod N$ symmetry.  A
symmetric but less concrete presentation is given as follows. The center
$Z(\mathscr{T})$ can be identified with the $N$-fold cyclic tensor product \[
\mathscr{T}\otimes_\mathscr{T}\mathscr{T}\otimes_\mathscr{T}\dots
\otimes_\mathscr{T} \mathscr{T}\otimes_\mathscr{T}\rotatebox{90}{$\circlearrowleft$} \] and
receives as such an (additive) functor from $\mathscr{T}^{\otimes N}$. The
Ising space of states corresponds to the image of $\chi^{\otimes N}$, where
$\chi = \bigoplus_i x_i\otimes \phi(x_i^\vee)$ is the image of $\bC$ under
the adjoint of the fiber functor $\phi$.  \end{remark}

  \subsection{The duality theorem}\label{subsec:8.1}

The Morita equivalence between $\mathscr{T}$ and
$\mathscr{E}=\mathrm{End}_\mathscr{T}(\mathscr{V}ect)$ interchanges their
``red'' and ``blue'' boundary conditions: $\mathscr{T} \leftrightarrow
\mathscr{V}ect$ and $\mathscr{V}ect \leftrightarrow \mathscr{E}$. A matching
color switch on $Y_c$ is realized by a change to the Morse function $d-f$,
corresponding to the dual lattice. With the switch of boundary conditions
comes the switch in the type of defects and (dis)order operators. We have
seen above that for $\mathscr{T}= \mathscr{V}ect[G]$, the bi-colored
construction reproduces the gauge theory of \S\ref{sec:3} and \S\ref{sec:6}.
It follows that the color switch reproduces the theory $\cR_G$ of
\S\ref{sec:6}, proving, as advertised in the summary but now stated more
precisely:

\begin{theorem}\label{thm:8.13}
There is a natural duality, on oriented manifolds, identifying the gauge theory of a finite group $G$ 
with the Turaev-Viro theory based on $\mathscr{R}ep(G)$, and a matching Kramers-Wannier duality of their 
lattice boundary theories with Fourier dual Ising actions. There is an interchange of Wilson and 't~Hooft 
defects in the bulk theories, and 
of order and disorder operators for the boundary. Dual Ising partition match up, after adjusting an 
overall scale factor. \qed
\end{theorem}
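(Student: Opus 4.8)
The plan is to deduce the theorem from the cobordism hypothesis applied to the Morita equivalence $\Vect[G]\simeq\Rep(G)$ of Proposition~\ref{thm:4}, and then to track how each ingredient of the bicolored data of \S\ref{subsec:8.1a} transports. First I would invoke Remark~\ref{thm:23}: the equivalence $\sF$ of module $2$-categories is $SO_3$-invariant, hence by the cobordism hypothesis induces an equivalence $\cG_G\circ j\xrightarrow{\;\simeq\;}\cR_G\circ j$ of \emph{oriented} three-dimensional field theories, which is the asserted bulk duality. On $\cir$ both theories give $\Vect_G(G)$, so the equivalence identifies their categories of line defects; the remaining work is to show that $\sF$ carries the gauge-theoretic quadruple $\cQ=(\cG_G,\rB_{\mathrm{Dir}},\rB_{\mathrm{Neu}},\cD)$, with its Ising action $\theta\in\Fun(G)=H_1$ and its Wilson/'t~Hooft point operators, to the Turaev--Viro quadruple $\cQ'=(\cR_G,\rB_{\mathrm{reg}},\rB_{\mathrm{fib}},\cD')$ built from $\Rep(G)$ with its regular module and canonical fiber functor, after the color swap of \S\ref{subsec:8.1} and passage to the dual lattice.

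Second I would identify the transported boundary conditions. Proposition~\ref{thm:4} states that $\sF$ sends $\Vect(G/H)$ to $\Rep(H)$; taking $H=\{e\}$ shows the Dirichlet condition $\Vect(G)$ for $\cG_G$ goes to $\Rep(\{e\})=\Vect$ over $\Rep(G)$, i.e.\ the fiber functor $\rB_{\mathrm{fib}}$, and taking $H=G$ shows the Neumann condition $\Vect=\Vect(G/G)$ goes to $\Rep(G)$, i.e.\ the regular condition $\rB_{\mathrm{reg}}$. Thus $\sF$ interchanges the two boundary types. The canonical defect $\cD$, pinned down by the complementarity $\Hom_{\cG_G}(\rB_{\mathrm{Dir}},\rB_{\mathrm{Neu}})=\mathbf{1}$ (Remark~\ref{gen}, \S\ref{abstractkoszul}), transports to the defect pinned down by $\Hom_{\cR_G}(\rB_{\mathrm{fib}},\rB_{\mathrm{reg}})=\mathbf{1}$, so $\cD\mapsto\cD'$. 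Consequently $\sF(\cQ)$ is the quadruple $\cQ'$ with its two colors reversed; but reversing red and blue on $Y_c$ is, by construction, replacing the self-indexing Morse function $f$ by $2-f$, which presents the dual polyhedral decomposition $\Lambda\dual$. Hence $\sF(\cQ)$ evaluated on $(Y,\Lambda)$ equals $\cQ'$ evaluated on $(Y,\Lambda\dual)$, and this equality of bicolored presentations \emph{is} the Kramers--Wannier duality of lattice boundary theories.

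Third I would track the Ising action and the operators. Under the red/blue swap the square $1$-handle $D^{1,1}$ undergoes a quarter rotation of $\partial_cD^{1,1}$; by the Peter--Weyl decompositions of $H_1$ in \S\ref{computeH1} and the remark there, this rotation acts on $H_1=\Fun(G)$ as the generalized Fourier transform \eqref{eq:47}, sending an admissible $\theta$ to $\theta\dual$. So the image under $\sF(\cQ)(X,Y_c)$ of $\Theta=\bigotimes_{\Edges(\Lambda)}\theta$ on $\Lambda$ is the vector built by $\cQ'(X,Y_c)$ from $\Theta\dual=\bigotimes_{\Edges(\Lambda\dual)}\theta\dual$ on $\Lambda\dual$; applying the projection $\cF(\PY)$ of Lemma~\ref{thm:49} and using functoriality under the TQFT equivalence, the two Ising partition functions in $\cF(Y)$ agree, up to the scalar contributions of the index-$0$ and index-$2$ components $\theta_{0,2}$, which lie in $1$-dimensional spaces (simplicity of the unit object) and so contribute only an overall factor---a product over vertices and faces, as in the generalized action of Definition~\ref{isingpart}, generalizing the explicit constant of Remark~\ref{thm:63} in the abelian case. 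Finally, line defects for both theories are objects of $\cF(\cir)\simeq\Vect_G(G)$; the Wilson defects are the constituents of $W_{\rB_{\mathrm{reg}}}$ and the 't~Hooft defects those of $W_{\rB_{\mathrm{fib}}}$, so interchanging the boundary conditions interchanges them, and since $H_p=\Hom(W_p,\mathbf{1})$ and the promotion data of Definition~\ref{abstractorderdef} is a morphism in $\cF(\cir)$, the order operators at vertices (index $0$) and the disorder operators at faces (index $2$) are exchanged with their data intact.

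I expect the main obstacle to be the second step: converting the slogan ``color swap $=$ Morse function $2-f$ $=$ dual lattice'' into a genuine diffeomorphism of bicolored cornered bordisms that is compatible with the simultaneous cobordism-hypothesis transport of all structured data. Concretely, one must check that the handle decomposition attached to $f$ on $(Y,\Lambda)$ and that attached to $2-f$ on $(Y,\Lambda\dual)$ coincide as bicolored manifolds with corners after relabeling red and blue, and that under this identification the standardizations at critical points---hence the objects $W_p$, the spaces $H_p$, and the $\mathrm{S}(\mathrm{O}(p)\times\mathrm{O}(q))$-invariance data---are matched. Once this is in place the remainder is either formal (functoriality under $\cG_G\simeq\cR_G$) or routine bookkeeping of the normalization scalars, so that the genuinely new content of the nonabelian duality is exactly the categorical input of Proposition~\ref{thm:4} plus this geometric compatibility.
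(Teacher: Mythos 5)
Your proposal is correct and follows essentially the same route as the paper's own argument in \S\ref{subsec:8.1}: the Morita equivalence $\Vect[G]\simeq\Rep(G)$ interchanges the regular and fiber-functor boundary conditions, the red/blue swap is realized by the Morse function $d-f$ and hence the dual lattice, and the $\pi/4$ rotation of $\partial_cD^{1,1}$ implements the Fourier transform $\theta\leftrightarrow\theta\dual$ on $H_1$, with the scale factor absorbed into the $SO(2)$-structures and the index-$0$ and index-$2$ components of the action. You have merely made explicit the steps the paper leaves implicit (including the geometric compatibility of the two handle decompositions, which the paper also asserts without a formal proof), so there is nothing substantive to add.
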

\begin{remark}
The scale factor is concealed in the choice of $\mathrm{SO}(2)$-structures for the boundary theories.
The ``Fourier duality'' of the Ising actions is simply their transformation in the dual Hopf algebra 
using the Frobenius form: evenness allows us to use either of the two Frobenius pairings.  
Mind that both sides of the duality define \emph{unoriented} theories; however, as explained in the 
Abelian case, the duality does require an orientation, otherwise an orientation twist is introduced.
\end{remark}

The theorem, of course, applies to general fusion categories with a fiber
functor, corresponding to finite, semi-simple Hopf algebras $H$; the
categories of modules and co-modules are then interchanged.  In this
generalization, we leave the setting of theories quantized from a classical
model of fields, but we can find new examples of \emph{self-dual} theories
\cite{Ma}, which is not possible in non-abelian gauge theory.

   \section{Higher dimensions and finite path integrals}\label{sec:7}

We generalize the two-dimensional lattice model for finite \emph{abelian}
groups to higher dimensions.  The background field encoding the higher global
symmetry is a higher abelian gauge field, and the abelian case admits a
natural generalization to \emph{spectra} (in the sense of stable homotopy
theory) which are finite homotopy types: only finitely many homotopy groups
are nonzero and each is a finite group.  These lattice models have arbitrary
``higher groups'' of this type as symmetry groups.  The construction is
phrased in terms of \emph{finite path integrals}; see~\cite{Q,F3,Tu,FHLT} for
various expositions.  All spaces and spectra in this section are finite
homotopy types.

  \subsection{Generalized finite electromagnetic duality}\label{subsec:7.1}

Suppose $S$~is a pointed space which is a finite homotopy type.  The example
relevant to~\S\ref{sec:3} is~$S=BG$ for $G$~a finite group.  Fix $n\in
\ZZ^{\ge0}$.  There is an $n$-dimensional extended topological field
theory~$\rF_S$ of unoriented manifolds which counts homotopy classes of maps
to~$S$.  Namely, if $X$~is a closed $n$-manifold, let $S^X=\Map(X,S)$ denote
the space of unpointed maps from~$X$ to~$S$.  The partition function is
  \begin{equation}\label{eq:76}
     \rF_S(X) = \sum\limits_{[\varphi ]\in \pi _0S^X} \frac{1}{\#\pi
      _1(S^X,\varphi )} \;\frac{\#\pi _2(S^X,\varphi )}{\#\pi
      _3(S^X,\varphi )} \cdots 
  \end{equation}
The sum is over homotopy classes of maps $X\to S$ and $\varphi $~is a
representative of a homotopy class.  For~$S=BG$ expression~\eqref{eq:76}
reduces to~\eqref{eq:9}.  The weighted sum accounts for automorphisms~$(\pi
_1)$, automorphisms of automorphisms~$(\pi _2)$, etc. 

  \begin{example}[]\label{thm:37}
 If $A$~is a finite \emph{abelian} group and $S=B^rA$ is the
$r^{\textnormal{th}}$~classifying space---the Eilenberg-MacLane space with
$\pi _rS=A$---then \eqref{eq:76}~reduces to 
  \begin{equation}\label{eq:77}
     \rF_{B^rA}(X) = \prod\limits_{i=0}^r\left( \#H^{r-i}(X;A) \right)
     ^{(-1)^i}. 
  \end{equation}
An $(n-1)$-dimensional theory relative to~$\rF_{B^rA}$ has a generalized
symmetry group~$B^{r-1}A$, sometimes called an ``$(r-1)$-form global
symmetry''~\cite{GKSW}.
  \end{example}

  \begin{remark}[]\label{thm:38}
 We can twist the sum~\eqref{eq:76} by a suitable cocycle\footnote{The
twisting cocycle can lie in a generalized cohomology theory, and it may also
depend on the topology of~$X$.  See~\cite{D} for an instance of the latter in
the low energy effective field theory attached to the double semion model.}
on~$S$.  For example, Dijkgraaf-Witten theory is the case $S=BG$ with cocycle
representing a class in $H^n(BG;\RR/2\pi i\ZZ)$.  An $(n-1)$-dimensional
theory relative to it is said to have an anomalous global symmetry
group~$G$.  Note that we can replace the mapping space $\Map(X_+,BG)$---or
rather its fundamental groupoid, which is all we use---by the equivalent
groupoid of principal $G$-bundles over~$X$.  With that understood the theory
of maps to~$BG$ is a gauge theory.  A similar remark holds for higher groups,
as in Example~\ref{thm:37}.
  \end{remark}

  \begin{example}[]\label{thm:43}
 Let $\bmu n\subset \TT$ be the group of $n^{\textnormal{th}}$~roots of
unity.  Then there is a pointed space~$S$ which fits into the (Postnikov)
principal fibration 
  \begin{equation}\label{eq:81}
     B^2\!\bmu n\longrightarrow S\longrightarrow B\!\bmu n 
  \end{equation}
and has $k$-invariant a generator of $H^3(B\!\bmu n;\bmu n)\cong \bmu n$.  We
can view $S$~as (the classifying space of) a 2-group; this 2-group acts as
global symmetries on any field theory relative to~$\rF_S$.  (We remark that
this is a finite analog of a 2-group which acts on certain 4-dimensional
gauge theories~\cite{CDI}.)  The $k$-invariant is unstable so this example
does not extend to a spectrum.  Still, it is possible that there is an
electromagnetic dual.
  \end{example}

 The finite path integral constructs an \emph{extended} $n$-dimensional
topological field theory: a vector space for a closed $(n-1)$-manifold, a
linear category or algebra for a closed $(n-2)$-manifold, etc.  See~\cite{F3}
for a heuristic treatment, \cite[\S3]{FHLT} for examples, and
\cite[\S8]{FHLT} for a general discussion.  Here we are content to tell the
result of the finite path integral in codimensions one and two. 
 
Let $Y$~be a closed $(n-1)$-manifold.  The vector space~$\rF_S(Y)$ is
simply~$\Fun(\pi _0S^Y)$, the space of functions on the set~$[Y_+,S]$ of
(free) homotopy classes of maps $Y\to S$.  If we twist by a cocycle
representing a class $\alpha \in H^n(S;\RR/2\pi i\ZZ)$, as in
Remark~\ref{thm:38}, then for $Y$~oriented we construct a line bundle
$p\:\sL\to \pi _{\le1}S^Y$ over the fundamental groupoid of $\Map(Y_+S)$.
(See \cite[Appendix~B]{FQ} for a special case.)  Then $\rF_S(Y)$~is the vector
space of sections of~$p$.  Observe that a loop in~$\pi _{\le1}S^Y$---an
automorphism of a map $\varphi \:Y\to S$---is a map $f\:\cir\times Y\to S$.
The holonomy of $\sL\to\pi _{\le1}S^Y$ around the loop is $\exp(\langle
f^*\alpha ,[\cir\times Y]  \rangle)\in \TT$.  If it is nontrivial for any
loop based at~$\varphi $, then all sections of~$p$ vanish at~$\varphi $. 
 
To a closed $(n-2)$-manifold~$Z$ we attach the linear category~$\rF_S(Z)$ of
complex vector bundles over~$\pi _{\le1}S^Z$, the fundamental groupoid of
the space of maps $Z\to S$.  This is the category of representations of the
path algebra of a finite groupoid equivalent to~$\pi _{\le1}S^Z$.  In the
presence of a nonzero cocycle we obtain a category of twisted vector bundles,
or equivalently the category of representations of a twisted path algebra;
see~\cite[\S8]{F3} for an example.

Now suppose that $S$~has an \emph{infinite loop space} structure, i.e., a
choice of spaces~$T_m$, $m\in \ZZ^{\ge0}$, and homotopy equivalences $S\to
\Omega ^mT_m$ which exhibit~$S$ as an $m$-fold loop space for every~$m\in
\ZZ^{\ge0}$.  Equivalently, a sequence $\{T_m\}_{m\in \ZZ^{\ge0}}$ of pointed
spaces and homotopy equivalences $T_m\to\Omega T_{m+1}$ defines a
\emph{spectrum}~$\sT$ and exhibits $S=T_0$ as the 0-space of the spectrum.
For example, $S=B^rA$ is the 0-space of the shifted Eilenberg-MacLane
spectrum~$\Sigma ^rHA$.  Spectra give homology and cohomology theories, and
vice versa; we write $H_{\bullet }(X;\sT)$, $H^{\bullet }(X;\sT)$ for the
homology, respectively cohomology, of a space~$X$ with coefficients in the
spectrum~$\sT$.  A spectrum~$\sT$ has a \emph{character dual}\footnote{If we
replace~$\TT$ with~$\QQ/\ZZ$ we obtain the \emph{Brown-Comenetz
dual}~\cite{BC}.  The character dual is usually defined with~$\Cx$ in place
of~$\TT$, but for finite homotopy types the circle group is sufficient.  Note
that $\TT, \QQ/\ZZ, \Cx$ all have the discrete topology in this context.}
spectrum~$\sT\dual$ defined as the cohomology theory
  \begin{equation}\label{eq:92}
     H^q(X;\sT\dual):=\Hom\bigl(H_q(X;\sT),\TT \bigr) = H_q(X;\sT)\dual. 
  \end{equation}
The dual to the sphere spectrum is denoted~$I\TT$, and then
$\sT\dual=\Map(\sT,I\TT)$.  Take $X=\pt$ in~\eqref{eq:92} to conclude that
the homotopy groups of~$\sT\dual$ are the Pontrjagin duals to the homotopy
groups of~$\sT$:
  \begin{equation}\label{eq:78}
     \pi _q\sT\dual\cong \left( \pi _{-q}\sT \right) \dual,\qquad q\in \ZZ. 
  \end{equation}
$\sT\dual$~has finite homotopy type if and only if $\sT$~does. 

  \begin{definition}[]\label{thm:40}
 Let $\rF_{\sT}\:\Bord_n\to\sC$ be the finite path integral theory based on the
spectrum~$\sT$.  The \emph{electromagnetic dual theory}
$(\rF_{\sT})\dual=\rF_{\Sigma ^{n-1}\sT\dual}$ is the finite path integral theory
based on the shifted character dual spectrum~$\Sigma ^{n-1}\sTd$.
  \end{definition}

The abelian duality of~\S\ref{subsec:3.2} is the case $n=3$, $\sT=\Sigma HA$,
for a finite abelian group~$A$.  The correspondence diagram~\eqref{eq:55} of
pointed spaces, which expresses the duality, generalizes to arbitrary spectra
as the commutative diagram of spectra 
  \begin{equation}\label{eq:79}
     \begin{gathered} \xymatrix{&(\sT\times \Sigma ^{n-1}\sTd,c
     )\ar[dl]_p\ar[dr]^q\\\sT&&\Sigma ^{n-1}\sTd} \end{gathered} 
  \end{equation}
in which 
  \begin{equation}\label{eq:80}
     c\:\sT\times \Sigma ^{n-1}\sTd\longrightarrow \Sigma ^{n-1}I\TT 
  \end{equation}
is the canonical duality map, which here represents an $(n-1)$-dimensional
domain wall.  Let $T_0$~be the 0-space of~$\sT$ and $T_{n-1}\dual$~the
0-space of~$\Sigma ^{n-1}\sTd$.  Then if $X=X'\cup_Y X''$ is an $n$-manifold
decomposed into the union of submanifolds~$X',X''$ with boundary intersecting
in the $(n-1)$-dimensional submanifold~$Y$, a field in the theory with domain
wall is a pair~$(\varphi ',\varphi '')$ of maps $\varphi '\:X'\to T_0$ and
$\varphi ''\:X''\to T_{n-1}\dual$.  On the restriction to~$Y$ we obtain a
representative of a class in $H^{n-1}(Y;I\TT)$.  As in~\S\ref{subsec:3.2} we
can use it to define a Fourier transform between the vector spaces associated
to~$Y$ in the theories~$\rF_{\sT}$ and~$(\rF_{\sT})\dual$.

  \begin{remark}[]\label{thm:41}
 At first glance to integrate this class over~$Y$ requires a framing on~$Y$,
since the cohomology theory~$I\TT$ is oriented for framed manifolds.  But the
duality map~\eqref{eq:80} may factor through a simpler cohomology theory with
a less stringent orientation condition.  For example, the duality map
in~\eqref{eq:55} takes values in~$H\TT$, so can be integrated over
(standardly) oriented manifolds. 
  \end{remark}

  \begin{remark}[]\label{thm:42}
 In the simplest case, $\sT=\Sigma ^rHA$ is the shifted Eilenberg-MacLane
spectrum of a finite abelian group~$A$.  Then $\varphi \:X\to T_0$ represents
a class in $H^r(X;A)$.  Electromagnetic duality is well-known for such higher
abelian gauge fields.
  \end{remark}

  \subsection{Boundary theories and domain walls}\label{subsec:7.2}
 
Let $S,B$~be pointed spaces, both finite homotopy types, and suppose $\pi
\:B\to S$ is a fibration.  From this data we construct a boundary theory~$\rB
_B$ for~$\rF_S$.  A field on a manifold~$X$ with boundary is $\varphi \:X\to
S$ as before, together with a lift $\psi \:\partial X\to B$ of the
restriction of~$\varphi $ to~$\partial X$.  So, for example, the value
of~$\rB _B$ on a closed $(n-1)$-manifold~$Y$ is the function of the
field~$\varphi \:Y\to S$ defined by counting lifts $\psi \:Y\to B$
of~$\varphi $ up to homotopy.  Let $\BYf$~denote the space of lifts.  Then we
have
  \begin{equation}\label{eq:82}
     (\rF_S,\rB _B)(Y)(\varphi ) = \sum\limits_{[\psi  ]\in \pi _0\BYf}
     \frac{1}{\#\pi 
      _1(\BYf,\psi )} \;\frac{\#\pi _2(\BYf,\psi )}{\#\pi
      _3(\BYf,\psi )} \cdots ,
  \end{equation}
where we sum over homotopy classes of lifts.  The result only depends on the
homotopy class of~$\varphi $.
 
For any~$S$ there are two canonical boundary theories.  The first is
associated to the inclusion $*\to S$ of the basepoint, made into a fibration
by replacing~$*$ with the contractible space~$P(S)$ of paths $\gamma
\:[0,1]\to S$ with $\gamma (0)=*$.  We call this the \emph{Dirichlet
boundary theory}.  The second is associated to the identity map $S\to S$ and
is termed the \emph{Neumann boundary theory}.  For finite gauge theory $S=BG$
the field~$\varphi $ may be replaced by a principal $G$-bundle
(Remark~\ref{thm:38}), in which case the Dirichlet boundary field is a
trivialization of the $G$-bundle and, as in general, the Neumann boundary
field is trivial.  These boundary theories are denoted~$\rB _e$ and~$\rB
_G$, respectively, in~\eqref{eq:11}.   
 
If pointed spaces~$S_1,S_2$ of finite homotopy type define theories
$\rF_{S_1},\rF_{S_2}$, then a correspondence diagram 
  \begin{equation}\label{eq:83}
     \begin{gathered} \xymatrix{&D\ar[dl]\ar[dr]\\S_1&&S_2} 
     \end{gathered}
  \end{equation}
of finite homotopy type pointed spaces defines a domain wall $\rD
_D\:\rF_{S_1}\to \rF_{S_2}$.  Similarly, a commutative diagram 
  \begin{equation}\label{eq:84}
     \begin{gathered} \xymatrix{&D\ar[dl]\ar[dr]\\B\ar[dr]&&B'\ar[dl]\\
     &S} \end{gathered} 
  \end{equation}
determines a domain wall between the boundary theories~$\rB _{B}\rB
_{B'}$.  There is a canonical domain wall between the Dirichlet and Neumann
boundary theories for any~$S$. 

  \begin{example}[]\label{thm:44}
 Fix~$n=3$ and~$S=BG$ for $G$~a finite group.  We use the canonical
Dirichlet (red) and Neumann (blue) boundary theories and the canonical
domain wall (green) between them.  Let~$Y$ be a closed 2-manifold.  Then
$\rF_S(Y)$~is the space of functions on~$\bung Y$, as in~\eqref{eq:10}.  If we
``color''~$Y$ red, so evaluate the pair~$(\rF_{BG},\rB
_{\textnormal{Dirichlet}})$ on~$Y$, we obtain a vector in~$\rF_S(Y)$---a
function on~$\bung Y$---by counting trivializations: the value of this
function on a principal $G$-bundle $Q\to Y$ is zero if $Q\to Y$ is not
trivializable and is $(\#G)^{\#\pi _0Y}$ if it is trivializable.  If instead
we color~$Y$ blue, then we deduce that $(\rF_{BG},\rB
_{\textnormal{Neumann}})(Y)$~is the constant function with value~1.

  \begin{figure}[ht]
  \centering
  \includegraphics[scale=1]{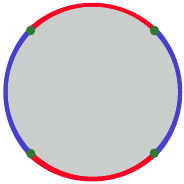}
  \caption{A 2-disk with colored boundary}\label{fig:3}
  \end{figure}

For more exercise we evaluate the quartet~$(\rF_{BG},\rB
_{\textnormal{Dirichlet}},\rB _{\textnormal{Neumann}},\rD
_{\textnormal{canonical}})$ on the 2-dimensional disk~$Y$ which is pictured
in Figure~\ref{fig:3}.  The boundary is divided into two sets of alternating
blue and red intervals connected by a total of 4~green points.  The quartet
is associated to the diagram 
  \begin{equation}\label{eq:85}
     \begin{gathered} \xymatrix{&\**\ar[dl]\ar[dr]\\\**\ar[dr]&&BG\ar[dl]\\
     &BG} \end{gathered} 
  \end{equation}
The value of the quartet on~$Y$ is a vector space, as it is for a closed
uncolored surface.  The finite path integral description tells that it is the
vector space of functions on path components of fields on~$Y$, which are
$G$-bundles trivialized over the red and green portions of the boundary.  The
result is isomorphic to the space of functions on~$G$.  This is used
in~\S\ref{subsec:8.2a}.
  \end{example}

For any finite homotopy type~$S$, the Dirichlet and Neumann boundary
theories, together with the canonical domain wall between them, can be input
to the constructions of~\S\ref{sec:8} to yield lattice theories.

\providecommand{\bysame}{\leavevmode\hbox to3em{\hrulefill}\thinspace}
\providecommand{\MR}{\relax\ifhmode\unskip\space\fi MR }
\providecommand{\MRhref}[2]{%
  \href{http://www.ams.org/mathscinet-getitem?mr=#1}{#2}
}
\providecommand{\href}[2]{#2}

  \end{document}